\numberwithin{figure}{section}
\numberwithin{table}{section}
\renewcommand\dim[1]{\operatorname{dim}(#1)}
\newcommand\Nat{\mathbb{N}}
\newcommand\Ganz{\mathbb{Z}}
\newcommand\Komplex{\mathbb{C}}
\newcommand\Betrag[1]{|#1|}
\newcommand\PolyRing[2]{#1[#2]}
\newcommand\FFSA{free filtration subarrangement }
\newcommand\FFSAp{free filtration subarrangement}
\newcommand\FFSAs{free filtration subarrangements }
\newcommand\FFSAsp{free filtration subarrangements}
\newcommand\FF{free filtration }
\newcommand\GeneralLinearGroup[1]{\operatorname{GL}(#1)}
\newcommand\Abb[3]{#1:#2 \to #3}
\newcommand\Derivation[1]{\operatorname{Der}(#1)}
\newcommand\Kern[1]{\operatorname{ker}(#1)}
\newcommand\IFC{\mathcal{IF}}
\newcommand\RFC{\mathcal{RF}}
\newcommand\DFC{\mathcal{DF}}
\newcommand\AnV[2]{(\mathcal{\uppercase{#1}},#2)}
\newcommand\An[1]{\mathcal{\uppercase{#1}}}
\newcommand\A{\mathcal{A}}
\newcommand\B{\mathcal{B}}
\newcommand\C{\mathcal{C}}
\newcommand\N{\mathcal{N}}
\newcommand\F{\mathcal{F}}
\newcommand\crg[1]{G_{#1}}
\newcommand\crgM[3]{G(#1,#2,#3)}
\newcommand\Arr[1]{\mathcal{A}(#1)}
\newcommand\expA[2]{\operatorname{exp}(#1) = \{\{ #2 \}\}}
\newcommand\expAA[1]{\operatorname{exp}(#1)}
\newcommand\LA[1]{L(#1)}
\newcommand\LAq[2]{L_{#2}(#1)}
\newcommand\emptA[1]{\Phi_{#1}}
\newcommand\DerA[1]{D(#1)}
\newcommand\ddxi[1]{\partial/\partial x_{#1}}
\newcommand\pdeg[1]{\operatorname{pdeg}#1}
\newcommand\CharPolyA[1]{\chi(#1,t)}
\newcommand\TripleA[1]{(#1,#1',#1'')}
\newcommand\SymA[1]{S(#1^*)}
\newcommand\PolyA[1]{Q(#1)}
\newcommand\SAA{\mathscr{A}}
\numberwithin{equation}{section}
\theoremstyle{plain}
\newtheorem{lemma}[equation]{Lemma}
\newtheorem{theorem}[equation]{Theorem}
\newtheorem{corollary}[equation]{Corollary}
\newtheorem{proposition}[equation]{Proposition}
\theoremstyle{definition}
\newtheorem{definition}[equation]{Definition}
\newtheorem{remark}[equation]{Remark}
\newtheorem{example}[equation]{Example}
\begin{document}

\title{Recursively free reflection arrangements}
\author{Paul M\"ucksch}
\address%
{Institut f\"ur Algebra, Zahlentheorie und Diskrete Mathematik,
Fakult\"at f\"ur Mathematik und Physik,
Leibniz Universit\"at Hannover,
Welfengarten 1, D-30167 Hannover, Germany}
\email{muecksch@math.uni-hannover.de}


\keywords{hyperplane arrangements, reflection arrangements,
	recursively free arrangements, inductively free arrangements, divisionally free arrangements}
\maketitle

\begin{abstract}
Let $\A = \A(W)$ be the reflection arrangement of the finite complex reflection group $W$.
By Terao's famous theorem, the arrangement $\A$ is free. 
In this paper we classify all reflection arrangements which belong to the smaller class of recursively free arrangements.
Moreover for the case that $W$ admits an irreducible factor isomorphic to $\crg{31}$ 
we obtain a new (computer-free) proof for the non-inductive freeness of $\A(W)$.
Since our classification implies the non-recursive freeness of the reflection arrangement $\A(\crg{31})$,
we can prove a conjecture by Abe about the new class of divisionally free arrangements which he recently introduced.
\end{abstract}

\section{Introduction}
Suppose that $W$ is a finite complex reflection group acting on the complex vector space $V$.
Let $\A = \A(W)$ be the associated hyperplane arrangement of the reflecting hyperplanes of $W$.
Then $\A$ is free, \cite{Terao1980}. 

There are several stronger notions of freeness.
In this paper we are mainly interested in two, namely \emph{inductive freeness}, first introduced by Terao in \cite{Terao1980}
and \emph{recursive freeness} which was introduced by Ziegler in \cite{Zielger87}.

In \cite{MR2854188} Barakat and Cuntz proved that all Coxeter arrangements are inductively free
and in \cite{2012arXiv1208.3131H} Hoge and R\"ohrle completed the classification of inductively free reflection arrangements
by inspecting the remaining complex cases.
They gave an easy characterization for all the cases but one, namely if the complex reflection group $W$ admits 
an irreducible factor isomorphic to $\crg{31}$ and handling this case also turns out to be the most difficult part of this paper.

In \cite{MR3272729} Cuntz and Hoge gave first examples for free but not recursively free arrangements.
One of them is the reflection arrangement of the exceptional complex reflection group $\crg{27}$.

Very recently, Abe, Cuntz, Kawanoue, and Nozawa \cite{2014arXiv1411.3351A} 
found smaller examples (with $13$ hyperplanes, being the smallest possible, and with $15$ hyperplanes)
for free but not recursively free arrangements in characteristic $0$.

Nevertheless, free but not recursively free arrangements seem to be rare.

Since reflection arrangements play an important role in the theory of hyperplane arrangements, it is natural to ask
which other reflection arrangements are free but not recursively free.   
In this paper we answer this question and complete the picture for reflection arrangements
by showing which of the not inductively free reflection arrangements are recursively free
and which are free but not recursively free. 
We obtain a classification of all recursively free reflection arrangements:

\begin{theorem}\label{thm:RFcrArr}
For $W$ a finite complex reflection group, the reflection arrangement $\A(W)$ of $W$ is
recursively free if and only if $W$ does not admit an irreducible factor isomorphic to one of the 
exceptional reflection groups $\crg{27}, \crg{29}, \crg{31}, \crg{33}$ and $\crg{34}$.
\end{theorem}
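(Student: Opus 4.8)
The plan is to prove the two implications separately, reducing everything to a short list of irreducible reflection groups and invoking two structural closure properties of the class $\RFC$ of recursively free arrangements: closure under products and closure under localization (both of which I would use as known facts).

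\emph{The ``if'' direction.} Assume $W$ has no irreducible factor isomorphic to $\crg{27}, \crg{29}, \crg{31}, \crg{33}$ or $\crg{34}$. Since $\A(W)$ is the product of the reflection arrangements of the irreducible factors of $W$ and $\RFC$ is closed under products, I may assume $W$ irreducible. By the Hoge--R\"ohrle classification, $\A(W)$ is inductively free, hence recursively free because $\IFC \subseteq \RFC$, unless $W$ is one of $\crg{24}, \crg{27}, \crg{29}, \crg{31}, \crg{33}, \crg{34}$. Under the present hypothesis the only outstanding irreducible case is $W = \crg{24}$. For this case I would give an explicit recursive construction: adjoin suitable hyperplanes to $\A(\crg{24})$ to obtain an inductively free arrangement $\B$, and recover $\A(\crg{24})$ from $\B$ by deletions for which the numerical hypotheses of the Addition--Deletion Theorem hold at every step. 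This exhibits $\A(\crg{24})$ as a member of $\RFC$ that is not inductively free, completing this direction.

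\emph{The ``only if'' direction.} Conversely, suppose $W$ has an irreducible factor $W_0 \in \{\crg{27}, \crg{29}, \crg{31}, \crg{33}, \crg{34}\}$. By Steinberg's theorem every localization of a reflection arrangement is again the reflection arrangement of a parabolic subgroup, and $\A(W_0)$ itself arises as such a localization of $\A(W)$; since $\RFC$ is closed under localization, it suffices to prove that none of $\A(\crg{27}), \A(\crg{29}), \A(\crg{31}), \A(\crg{33}), \A(\crg{34})$ is recursively free. The case $\crg{27}$ is the example of Cuntz--Hoge. For the higher-rank groups I would reduce by localization as far as the parabolic structure allows; for instance $\A(\crg{34})$ localizes to $\A(\crg{33})$ at a flat of parabolic type $\crg{33}$, and which such reductions are available is governed by the order divisibilities of the groups. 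The essential new content is thereby concentrated in the rank-$4$ groups $\crg{29}$ and, above all, $\crg{31}$, which admit no localization to a smaller non-recursively-free arrangement and must be treated directly by the same method.

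\emph{The main obstacle: $\A(\crg{31})$ is not recursively free.} Suppose, for contradiction, that $\A := \A(\crg{31})$ --- free with exponents $(1,13,17,29)$ --- lies in $\RFC$. The last step of any recursive chain producing $\A$ is either an addition $\A = \A' \cup \{H\}$ from a recursively free $\A'$, or a deletion $\A = \B \setminus \{H\}$ from a recursively free $\B = \A \cup \{H\}$. The addition case is tightly constrained by the Addition--Deletion Theorem: I would compute, for each orbit of hyperplanes $H \in \A$, the exponents of the rank-$3$ restriction $\A^H$ and check that the forced exponent pattern of $\A'$ cannot be realized with $\A'$ free, ruling out such chains. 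The genuine difficulty is the deletion case, because a priori $\A$ could be excised from a strictly larger recursively free arrangement, so the chain may pass through free arrangements properly containing $\A$. The crux is therefore to control these ``free oversets'': I would show that the hyperplanes which may be adjoined to $\A$ while keeping it free are so severely restricted that no valid deletion triple $(\B, \A, \B^H)$ can exist, excluding every recursive chain. Bounding this family is exactly where the work lies and where recursive freeness genuinely differs from inductive freeness. Combined with the localization reductions above, the resulting contradiction proves the ``only if'' direction, and with it the theorem.
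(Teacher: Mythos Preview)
Your overall architecture is right, but there are three genuine gaps.

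\textbf{The ``if'' direction misses the infinite family $\crgM{r}{r}{\ell}$.} The Hoge--R\"ohrle classification of inductively free reflection arrangements excludes not only $\crg{24},\crg{27},\crg{29},\crg{31},\crg{33},\crg{34}$ but also the monomial groups $\crgM{r}{r}{\ell}$ for $r,\ell\geq 3$. These are not inductively free, so you owe a separate argument that they are recursively free. The paper handles this via the intermediate arrangements $\A_\ell^k(r)$ between $\A(\crgM{r}{r}{\ell})$ and $\A(\crgM{r}{1}{\ell})$, which are known to be recursively free; your sketch simply omits the whole family.

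\textbf{$\crg{33}$ cannot be reduced by localization.} The parabolic subgroups of $\crg{33}$ do not contain any of $\crg{27},\crg{29},\crg{31}$, so your plan to ``reduce by localization as far as the parabolic structure allows'' leaves $\crg{33}$ untouched. It must be treated directly, and the paper does so by showing it is isolated: no hyperplane can be removed (already observed by Hoge--R\"ohrle) and no hyperplane can be added to produce a free arrangement.

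\textbf{The $\crg{31}$ argument is broken on the ``addition'' side.} You propose to rule out the case $\A=\A'\cup\{H\}$ by checking that the exponent pattern forced on $\A'$ by Addition--Deletion cannot be realized with $\A'$ free. But this is false: $\crg{31}$ acts transitively on its hyperplanes, every restriction $\A^H$ has exponents $\{\{1,13,17\}\}$, and every deletion $\A'=\A\setminus\{H\}$ \emph{is} free with exponents $\{\{1,13,17,28\}\}$. So the last step of a putative recursive chain can perfectly well be an addition, and you are forced to analyze $\A'$ in turn --- and then $\A''$, and so on. The paper confronts this head-on: it characterizes \emph{all} free filtration subarrangements of $\A(\crg{31})$ combinatorially, proves that the smallest such is (up to isomorphism) $\A(\crg{29})$, and then shows that no hyperplane outside $\A(\crg{31})$ can be added to \emph{any} free filtration subarrangement while preserving freeness. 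Only this global description of the entire ``island'' of free arrangements reachable from $\A(\crg{31})$ closes the argument, and it simultaneously disposes of $\crg{29}$. Your sketch controls only the oversets of $\A(\crg{31})$ itself, which is the easy half.
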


Furthermore, for the special case $W \cong \crg{31}$, we obtain a (with respect to ``Addition'' and ``Deletion'') isolated cluster 
of free but not recursively free subarrangements of $\A(W)$ in dimension $4$.

Recently in \cite{AbeDivFree2015}, Abe introduced the new class of divisionally free arrangements, based
on his Division-Theorem, \cite[Thm.~1.1]{AbeDivFree2015}, about freeness 
and division of characteristic polynomials analogous to the class of 
inductively free arrangements based on the Addition-Deletion-Theorem.
With Theorem \ref{thm:RFcrArr}, we are able to positively settle a conjecture by Abe, \cite[Conj.~5.11]{AbeDivFree2015}, 
about this new class of free arrangements, which we state as the next theorem.

\begin{theorem}\label{thm:ConjAbe}
There is an arrangement $\A$ such that $\A \in \DFC$ and $\A \notin \RFC$.
\end{theorem}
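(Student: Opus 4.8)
The plan is to prove Theorem~\ref{thm:ConjAbe} by exhibiting an explicit arrangement that lies in the class $\DFC$ of divisionally free arrangements but not in the class $\RFC$ of recursively free arrangements. The natural candidate is the reflection arrangement $\A = \A(\crg{31})$ itself, since Theorem~\ref{thm:RFcrArr} already tells us that $\A(\crg{31})$ is \emph{not} recursively free, giving us $\A \notin \RFC$ for free. Thus the entire content of the proof is reduced to establishing the positive half of the separation, namely that $\A(\crg{31}) \in \DFC$.

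To show $\A(\crg{31}) \in \DFC$, the plan is to invoke Abe's Division-Theorem, \cite[Thm.~1.1]{AbeDivFree2015}. The strategy is to find a suitable hyperplane $H \in \A$ whose restriction $\A^H$ is free and whose characteristic polynomial $\CharPolyA{\A^H}$ divides $\CharPolyA{\A}$; membership in $\DFC$ then follows by definition, since $\A(\crg{31})$ is free by Terao's theorem. First I would record that every restriction of a reflection arrangement to one of its hyperplanes is again (up to the relevant combinatorial data) well understood, so that freeness of $\A^H$ and its exponents can be read off from known tables of the exceptional groups; the group $\crg{31}$ acts on a $4$-dimensional space, so the restrictions live in dimension $3$, where freeness is especially tractable. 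Then I would verify the divisibility of characteristic polynomials directly by comparing the factorizations: since $\A(W)$ is free with exponents given by the coexponents of $W$, its characteristic polynomial factors completely over $\Ganz$, and one only needs to check that the exponents of the chosen restriction $\A^H$ form a subset of those of $\A$ (shifted appropriately).

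The main obstacle I expect is the verification that an appropriate hyperplane $H$ exists for which \emph{both} conditions of the Division-Theorem hold simultaneously: the restriction must be free and the polynomial division must go through. While the freeness of three-dimensional restrictions is usually automatic, the combinatorial bookkeeping of the restriction $\A(\crg{31})^H$ — identifying its intersection lattice and confirming that its characteristic polynomial genuinely divides that of the ambient arrangement — is the delicate step and may require inspecting the restriction data provided by the Orlik--Terao tables or by direct computation. An alternative, and perhaps cleaner, route would be to apply the inductive form of the Division-Theorem along a chain of restrictions, but the single-step application should suffice given the favorable factorization of $\CharPolyA{\A(\crg{31})}$.

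Once $\A(\crg{31}) \in \DFC$ is established, the theorem follows immediately: set $\A = \A(\crg{31})$, observe $\A \in \DFC$ by the Division-Theorem argument above, and $\A \notin \RFC$ by Theorem~\ref{thm:RFcrArr}. This witnesses the strict containment $\RFC \subsetneq \DFC$ and thereby confirms Abe's conjecture \cite[Conj.~5.11]{AbeDivFree2015}.
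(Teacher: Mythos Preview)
Your overall strategy coincides with the paper's: take $\A = \A(\crg{31})$, use Theorem~\ref{thm:RFcrArr} (more precisely its $\crg{31}$-case) for $\A \notin \RFC$, and establish $\A \in \DFC$ separately. The paper, however, does not re-derive divisional freeness; it simply quotes Abe's own result \cite[Cor.~4.7]{AbeDivFree2015} (stated in the paper as Theorem~\ref{thm:AbeDFRA}), which already asserts $\A(\crg{31}) \in \DFC$.

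There is one genuine slip in your plan for the $\DFC$ part. You write that finding $H$ with $\A^H$ free and $\CharPolyA{\A^H}\mid\CharPolyA{\A}$ gives ``membership in $\DFC$ \dots\ by definition''. It does not: Definition~\ref{def:DF} requires $\A^H \in \DFC$, not merely $\A^H$ free. The Division Theorem (Theorem~\ref{thm:Div_Thm}) yields freeness of $\A$, which you already have from Terao, but says nothing about $\DFC$-membership. So a single step never suffices; you must exhibit a full divisional flag down to a $2$-arrangement. Concretely, since $\expAA{\A(\crg{31})}=\{\{1,13,17,29\}\}$ and every restriction $\A^H$ is free with $\expAA{\A^H}=\{\{1,13,17\}\}$ (as recorded in the proof of Lemma~\ref{lem:SubsetN_FFSA}), the divisibility at the first step is automatic; but you then still need an $H'\in\A^H$ with $|(\A^H)^{H'}|\in\{14,18\}$ so that $\CharPolyA{(\A^H)^{H'}}$ divides $\CharPolyA{\A^H}$. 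This is exactly what Abe checks in \cite{AbeDivFree2015}, and citing his corollary, as the paper does, is the clean way to close the argument.
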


Finally, we will comment on the situation of a restriction of a reflection arrangement.

In order to compute the different intersection lattices of the reflection arrangements in question,
to obtain the respective invariants, and to recheck our results
we used the functionality of the GAP computer algebra system, \cite{GAP4}.

The author thanks his thesis advisor Professor Michael Cuntz and Torsten Hoge for many helpful discussions and remarks.

\section{Recollection and Preliminaries}

We review the required notions and definitions. Compare with \cite{orlik1992arrangements}.

\subsection{Arrangements of hyperplanes}

\begin{definition}
An $\ell$\emph{-arrangement of hyperplanes} is a pair $\AnV{A}{V}$, where $\A$ is a finite collection of hyperplanes
(codimension $1$ subspaces) in $V = \mathbb{K}^\ell$, a finite dimensional vector space over a fixed field $\mathbb{K}$.
For $\AnV{A}{V}$ we simply write $\A$ if the vector space $V$ is unambiguous.

We denote the empty $\ell$-arrangement by $\emptA{\ell}$.
\end{definition}

In this paper we are only interested in complex \emph{central} arrangements $\A$, that is, all the hyperplanes in $\A$ 
are linear subspaces and $V$ is a finite dimensional complex vector space $V = \Komplex^\ell$.

If we want to explicitly write down the hyperplanes of an $\ell$-arrangement,
we will use the notation: 
$H = \Kern{\alpha} =: \alpha^\perp$ for a linear form $\alpha \in V^*$.
If $\{x_1,\ldots,x_n\}$ is a basis for $V^*$, we write $\alpha = \sum_{i=1}^\ell a_i x_i \in V^*$ 
also as a row vector $(a_1,\ldots,a_\ell)$.

The \emph{intersection lattice} $\LA{\A}$ of $\A$ is the set of all subspaces $X$ of $V$ of the form
$X = H_1 \cap \ldots \cap H_r$ with $\{ H_1,\ldots,H_r\} \subseteq \A$.
If $X \in \LA{\A}$, then the \emph{rank} $r(X)$ of $X$ is defined as $r(X) := \ell - \dim{X}$ and the rank of the arrangement
$\A$ is defined as $r(\A) := r(T(\A))$ where $T(\A) := \bigcap_{H \in \A} H$ is the \emph{center} of $\A$.
An $\ell$-arrangement $\A$ is called \emph{essential} if $r(\A)=\ell$.
For $X \in \LA{\A}$, we define the localization
$\A_X := \{ H \in \A \mid X \subseteq H \}$ of $\A$ at $X$, and the \emph{restriction of} $\A$ to $X$,
$(\A^X,X)$, where $\A^X := \{ X\cap H \mid H \in \A \setminus \A_X \}$.
For $0 \leq q \leq \ell$ we write $\LAq{\A}{q} := \{ X \in \LA{\A} \mid r(X) = q \}$.
For $\A \neq \emptA{\ell}$, let $H_0 \in \A$.
Define $\A' := \A \setminus \{ H_0 \}$, and $\A'' := \A^{H_0}$.
We say that $\TripleA{\A}$ is a \emph{triple} of arrangements (with respect to $H_0$), \cite[Def.~1.14]{orlik1992arrangements}.

The \emph{product} $\A = (\A_1 \times \A_2,V_1 \oplus V_2)$ of two arrangements $(\A_1,V_1)$, $(\A_2,V_2)$
is defined by
\begin{equation*}
\A := \A_1 \times \A_2 = \{ H_1 \oplus V_2 \mid H_1 \in \A_1 \} \cup \{ V_1 \oplus H_2 \mid H_2 \in \A_2 \},
\end{equation*}
see \cite[Def.~2.13]{orlik1992arrangements}. In particular $\Betrag{\A} = \Betrag{\A_1} + \Betrag{\A_2}$.

If an arrangement $\A$ can be written as a non-trivial product $\A = \A_1 \times \A_2$,
then $\A$ is called \emph{reducible}, otherwise \emph{irreducible}.
%

For an arrangement $\A$ the \emph{M\"obius function} $\Abb{\mu}{\LA{\A}}{\Ganz}$ is defined by:
\begin{equation*}
\mu(X) = \left\{ \begin{array}{l l}
   1 & \quad \text{if } X=V \text{,}\\
    -\sum_{V \supseteq Y \supsetneq X} \mu(Y) & \quad \text{if }X \neq V\text{.}
  \end{array} \right.
\end{equation*}

We denote by $\CharPolyA{\A}$ 
the \emph{characteristic polynomial} of $\A$ which is defined by:
\begin{equation*}
	\CharPolyA{\A} = \sum_{X \in \LA{\A}} \mu(X) t^{\dim{X}}.
\end{equation*}

An element $X \in \LA{\A}$ is called \emph{modular} if $X + Y \in \LA{\A}$ for all $Y \in \LA{\A}$.
An arrangement $\A$ with $r(\A) = \ell$ is called \emph{supersolvable} if the intersection lattice $\LA{\A}$ is supersolvable, i.e.\ it has
a maximal chain of modular elements $ V = X_0 \supsetneq X_1 \supsetneq \ldots \supsetneq X_\ell = T(\A)$, 
$X_i \in \LA{\A}$ modular.
For example an essential $3$-arrangement $\A$ is supersolvable if there exists an $X \in \LA{\A}_2$ which is
connected to all other $Y \in \LA{\A}_2$ by a suitable hyperplane $H \in \A$, (i.e.\ $X + Y \in \A$).

\subsection{Free Arrangements}

Let $S = \SymA{V}$ be the symmetric algebra of the dual space $V^*$ of $V$. If $x_1,\ldots,x_\ell$ is a basis of $V^*$,
then we identify $S$ with the polynomial ring $\PolyRing{\Komplex}{x_1,\ldots,x_\ell}$ in $\ell$ variables.
The algebra $S$ has a natural $\Ganz$-grading:
Let $S_p$ denote the $\Komplex$-subspace of $S$ of the homogeneous polynomials of degree $p$ ($p \in \Nat_{\geq 0}$),
then $S = \bigoplus_{p \in \Ganz} S_p$, where $S_p = 0$ for $p < 0$.

Let $\Derivation{S}$ be the $S$-module of $\Komplex$-derivations of $S$. It is a free $S$-module with basis
$D_1,\ldots,D_\ell$ where $D_i$ is the partial derivation $\ddxi{i}$.
We say that $\theta \in \Derivation{S}$ is \emph{homogeneous of polynomial degree} $p$ provided
$\theta = \sum_{i=1}^\ell f_i D_i$, with $f_i \in S_p$ for each $1 \leq i \leq \ell$.
In this case we write $\pdeg{\theta} = p$.
With this definition we get a $\Ganz$-grading for the $S$-module $\Derivation{S}$:
Let $\Derivation{S}_p$ be the $\Komplex$-subspace of $\Derivation{S}$ consisting of 
all homogeneous derivations of polynomial degree $p$, then
$\Derivation{S} = \bigoplus_{p \in \Ganz} \Derivation{S}_p$.

\begin{definition}
Let $\A$ be an arrangement of hyperplanes in $V$. Then for $H \in \A$ we fix $\alpha_H \in V^*$ with $H = \Kern{\alpha_H}$.
A \emph{defining polynomial} $\PolyA{\A}$ is given by $\PolyA{\A} := \prod_{H \in \A} \alpha_H \in S$.

The \emph{module of $\A$-derivations} of $\A$ is defined by
\begin{equation*}
	\DerA{\A} := \DerA{\PolyA{\A}} := \{ \theta \in \Derivation{S} \mid \theta(\PolyA{\A}) \in \PolyA{\A}S \}.
\end{equation*}

We say that $\A$ is \emph{free} if the module of $\A$-derivations is a free $S$-module.
\end{definition}

If $\A$ is a free arrangement, let $\{ \theta_1, \ldots, \theta_\ell \}$ be a homogeneous basis for $\DerA{\A}$.
Then the polynomial degrees of the $\theta_i$, $i \in \{1,\ldots,\ell\}$, are called the \emph{exponents} of $\A$.
We write $\expAA{\A} := \{\{\pdeg{\theta_1},\ldots,\pdeg{\theta_\ell}\}\}$, where the notation $\{\{ * \}\}$ emphasizes the fact,
that $\expAA{\A}$ is a multiset in general.
The multiset $\expAA{\A}$ is uniquely determined by $\A$, see also \cite[Def.\ 4.25]{orlik1992arrangements}.

If $\A$ is free with exponents $\expA{\A}{b_1,\ldots,b_\ell}$, then by \cite[Thm.\ 4.23]{orlik1992arrangements}:
\begin{equation}
	\sum_{i=1}^\ell b_i = \Betrag{\A}. \label{Sum_exp}
\end{equation}

The following proposition shows that the product construction mentioned before is compatible with the notion
of freeness:

\begin{proposition}[{\cite[Prop.~4.28]{orlik1992arrangements}}]\label{prop:Prod_free}
Let $(\A_1,V_1)$ and $(\A_2,V_2)$ be two arrangements.
The product arrangement $(\A_1 \times \A_2, V_1 \oplus V_2)$ is free if and only if both $(\A_1,V_1)$ and 
$(\A_2,V_2)$ are free. In this case
\begin{equation*}
\expAA{\A_1 \times \A_2} = \expAA{\A_1} \cup \expAA{\A_2}.
\end{equation*}
\end{proposition}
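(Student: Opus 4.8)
The plan is to compute the module $\DerA{\A_1 \times \A_2}$ directly, exploiting the splitting of coordinates induced by the product. First I would set $V := V_1 \oplus V_2$, $n := \dim{V_1}$, $m := \dim{V_2}$, and fix a basis $x_1,\ldots,x_n$ of $V_1^*$ together with a basis $y_1,\ldots,y_m$ of $V_2^*$, so that $S = \Komplex[x_1,\ldots,x_n,y_1,\ldots,y_m]$ factors as $S = S_1 \otimes_\Komplex S_2$ with $S_1 = \Komplex[x_1,\ldots,x_n]$ and $S_2 = \Komplex[y_1,\ldots,y_m]$. Each hyperplane of the first family $H_1 \oplus V_2$ is cut out by a linear form involving only the $x_i$, and each of the second family $V_1 \oplus H_2$ by a form involving only the $y_j$; hence the defining polynomial factors as $\PolyA{\A_1 \times \A_2} = Q_1 Q_2$ with $Q_1 := \PolyA{\A_1} \in S_1$ and $Q_2 := \PolyA{\A_2} \in S_2$. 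Since $Q_1$ and $Q_2$ involve disjoint sets of variables, they are coprime in the factorial ring $S$.

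Next I would write an arbitrary $\theta \in \Derivation{S}$ as $\theta = \sum_i f_i \partial_{x_i} + \sum_j g_j \partial_{y_j}$. By the Leibniz rule $\theta(Q_1 Q_2) = Q_2\,\theta(Q_1) + Q_1\,\theta(Q_2)$, and since $Q_1$ is free of the $y$-variables and $Q_2$ of the $x$-variables, $\theta(Q_1) = \sum_i f_i \partial_{x_i}(Q_1)$ and $\theta(Q_2) = \sum_j g_j \partial_{y_j}(Q_2)$. Using coprimality of $Q_1, Q_2$, the membership condition $Q_1 Q_2 \mid \theta(Q_1 Q_2)$ is then equivalent to the two independent conditions $Q_1 \mid \theta(Q_1)$ and $Q_2 \mid \theta(Q_2)$. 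Consequently $\DerA{\A_1 \times \A_2}$ decomposes as a direct sum of graded $S$-modules $D_x \oplus D_y$, where $D_x$ collects the $x$-derivations satisfying the first condition and $D_y$ the $y$-derivations satisfying the second. Expanding $f_i = \sum_\beta f_{i,\beta}\,y^\beta$ with $f_{i,\beta} \in S_1$ and reading off the coefficient of each monomial $y^\beta$, the condition $Q_1 \mid \sum_i f_i \partial_{x_i}(Q_1)$ says exactly that $\sum_i f_{i,\beta}\,\partial_{x_i} \in \DerA{\A_1}$ for every $\beta$. This identifies $D_x = S \otimes_{S_1} \DerA{\A_1}$, and symmetrically $D_y = S \otimes_{S_2} \DerA{\A_2}$.

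From this identification both directions follow. If $\A_1$ and $\A_2$ are free with homogeneous bases $\{\theta^{(1)}_1,\ldots,\theta^{(1)}_n\}$ and $\{\theta^{(2)}_1,\ldots,\theta^{(2)}_m\}$, then extension of scalars along the flat inclusions $S_1 \hookrightarrow S$ and $S_2 \hookrightarrow S$ shows that the union of these derivations is a homogeneous $S$-basis of $D_x \oplus D_y = \DerA{\A_1 \times \A_2}$, so the product is free. Because the coefficients of each $\theta^{(i)}_k$ lie in $S_i$ and are homogeneous, $\pdeg{\theta^{(i)}_k}$ is unchanged when $\theta^{(i)}_k$ is viewed as an element of $\Derivation{S}$; reading off the polynomial degrees then yields $\expAA{\A_1 \times \A_2} = \expAA{\A_1} \cup \expAA{\A_2}$ as multisets.

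Conversely, suppose $\DerA{\A_1 \times \A_2}$ is free. Then the graded direct summand $D_x$ is graded projective over $S$, hence free, since every finitely generated graded projective module over the polynomial ring $S$ is free by graded Nakayama. As $S$ is faithfully flat over $S_1$ and $D_x \cong \DerA{\A_1} \otimes_{S_1} S$, faithfully flat descent of projectivity shows that the finitely presented $S_1$-module $\DerA{\A_1}$ is projective, hence again free by graded Nakayama; thus $\A_1$ is free, and symmetrically $\A_2$. I expect the forward construction together with the exponent count to be routine once the decoupling of the divisibility conditions is in hand; the real weight of the argument lies in the converse, where one must descend freeness of the extended module $D_x$ back to freeness of $\DerA{\A_1}$ over $S_1$, and this graded-projectivity/descent step is the main obstacle.
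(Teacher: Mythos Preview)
The paper does not prove this proposition; it is quoted as Proposition~4.28 from Orlik--Terao without argument, so there is no ``paper's own proof'' to compare against. Your argument is correct and follows essentially the standard route: decouple the variables, factor $\PolyA{\A_1\times\A_2}=Q_1Q_2$ with $Q_1,Q_2$ coprime, and obtain the graded $S$-module splitting $\DerA{\A_1\times\A_2}\cong (S\otimes_{S_1}\DerA{\A_1})\oplus(S\otimes_{S_2}\DerA{\A_2})$.

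One remark on the converse: invoking faithfully flat descent of projectivity is heavier than necessary. Once you know $D_x=S\otimes_{S_1}\DerA{\A_1}$ is graded free over $S$, you can simply base-change along the graded surjection $S\twoheadrightarrow S_1=S/(y_1,\ldots,y_m)$. Since $D_x=\DerA{\A_1}\otimes_\Komplex S_2$, one has $D_x/(y_1,\ldots,y_m)D_x\cong\DerA{\A_1}$, and the image of a homogeneous $S$-basis of $D_x$ is a homogeneous $S_1$-basis of $\DerA{\A_1}$. This avoids both descent and the graded-Nakayama step for $\DerA{\A_1}$, and it makes the equality of exponents immediate since the degrees of the basis elements are preserved under reduction.
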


Throughout our exposition we will frequently use the following important results about free arrangements.

\begin{theorem}[Addition-Deletion {\cite[Thm.\ 4.51]{orlik1992arrangements}}]\label{thm:Addition_Deletion}
Let $\A$ be a hyperplane arrangement and $\A \neq \emptA{\ell}$. Let $(\A,\A',\A'')$ be a triple.
Any two of the following statements imply the third:
\begin{eqnarray*}
\A \text{ is free with } \expAA{\A} &=& \{\{b_1,\ldots,b_{l-1},b_l\}\}, \\
\A' \text{ is free with } \expAA{\A'} &=& \{\{b_1,\ldots,b_{l-1},b_l-1\}\}, \\
\A'' \text{ is free with } \expAA{\A''} &=& \{\{b_1,\ldots,b_{l-1}\}\}.
\end{eqnarray*}
\end{theorem}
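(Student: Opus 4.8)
The plan is to reduce the entire statement to one structural tool, the \emph{fundamental exact sequence} of the triple, and then to verify freeness in each case by Saito's criterion together with the degree-sum relation \eqref{Sum_exp}. Fix $H_0 \in \A$ with $\alpha := \alpha_{H_0}$ and $H_0 = \Kern{\alpha}$, and write $S'' := S/\alpha S \cong \SymA{H_0}$ for the coordinate ring of the restriction, so that $\DerA{\A''} \subseteq \Derivation{S''}$. First I would record the three maps linking the derivation modules. Since every hyperplane of $\A'$ also lies in $\A$, the defining condition for $\DerA{\A}$ is stronger than that for $\DerA{\A'}$, so $\DerA{\A} \subseteq \DerA{\A'}$. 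Because $\PolyA{\A} = \alpha\,\PolyA{\A'}$, multiplication by $\alpha$ sends $\DerA{\A'}$ into $\DerA{\A}$ and is injective; and because $H_0 \in \A$ forces $\theta(\alpha) \in \alpha S$ for every $\theta \in \DerA{\A}$, each such $\theta$ is tangent to $H_0$ and descends to a derivation $\bar\theta \in \Derivation{S''}$, which one checks lies in $\DerA{\A''}$. This gives the restriction map $r\colon \DerA{\A} \to \DerA{\A''}$.

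The backbone of the argument is then the exact sequence of graded $S$-modules
\begin{equation*}
0 \longrightarrow \DerA{\A'} \xrightarrow{\ \alpha\,\cdot\ } \DerA{\A} \xrightarrow{\ r\ } \DerA{\A''},
\end{equation*}
in which $\alpha\,\cdot$ raises polynomial degree by one and $r$ preserves it. Exactness at $\DerA{\A}$ amounts to the identity $\Kern{r} = \alpha\,\DerA{\A'}$, which I would prove by the coordinate computation: choosing $\alpha = x_1$, a derivation $\theta = \sum_i f_i\,\ddxi{i} \in \DerA{\A}$ satisfies $r(\theta)=0$ exactly when every $f_i$ is divisible by $\alpha$, and then $\theta/\alpha$ lies in $\DerA{\A'}$. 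The two verification tools are Saito's criterion (\cite{orlik1992arrangements}), which says that homogeneous $\theta_1,\dots,\theta_\ell \in \DerA{\A}$ form an $S$-basis precisely when $\det{\theta_i(x_j)} \neq 0$ and $\sum_i \pdeg{\theta_i} = \Betrag{\A}$, and the degree-sum relation \eqref{Sum_exp}.

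With this in place I would treat the three implications uniformly. For the Addition step, assume $\A'$ and $\A''$ are free with the stated exponents; choose a homogeneous basis $\theta_1,\dots,\theta_\ell$ of $\DerA{\A'}$ with $\pdeg{\theta_\ell}=b_\ell-1$, a homogeneous basis $\bar\psi_1,\dots,\bar\psi_{\ell-1}$ of $\DerA{\A''}$ with $\pdeg{\bar\psi_i}=b_i$, and lift the latter through $r$ to $\psi_i \in \DerA{\A}$ of the same degrees. Then $\{\alpha\theta_\ell,\psi_1,\dots,\psi_{\ell-1}\}$ is the candidate basis of $\DerA{\A}$: its degrees are $b_\ell,b_1,\dots,b_{\ell-1}$, whose sum equals $\Betrag{\A'}+1=\Betrag{\A}$ by \eqref{Sum_exp}, and $S$-linear independence (equivalently $\det{\theta_i(x_j)} \neq 0$) follows from exactness by a short descent argument, since the $\psi_i$ map to the independent $\bar\psi_i$ while $\alpha\theta_\ell$ is a nonzero element of $\Kern{r}$. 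Saito's criterion then yields freeness of $\A$ with the asserted exponents. For the Deletion step and for the remaining implication I would run the same scheme in reverse: from a basis of $\DerA{\A}$ whose members are tangent to $H_0$, the restrictions $\bar\theta_i$ together with the factorization $\det{\theta_i(x_j)} \doteq \alpha\cdot\det{\bar\theta_i(x_j)}$ of the Saito determinant along the $\alpha$-column produce a basis of $\DerA{\A''}$, and peeling off the $\alpha$-divisible member of the basis recovers a basis of $\DerA{\A'}$; in each case \eqref{Sum_exp} fixes the exponents.

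The main obstacle is the exactness analysis of $r$, and specifically the surjectivity needed for the Addition step: the lifts $\psi_i$ exist only because the freeness hypotheses force $r$ to be onto in the relevant degrees, and making this precise is the heart of the matter. Equivalently, one must control the determinant factorization and verify that the Saito determinant of the constructed family divides $\PolyA{\A}$ (respectively $\PolyA{\A''}$) exactly, with no loss of degree upon restriction to $H_0$; the degree-bookkeeping supplied by \eqref{Sum_exp} is exactly what rules out such degeneracies and pins the exponents down to the stated multisets.
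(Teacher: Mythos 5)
You should first note that the paper itself contains no proof of this statement: it is quoted verbatim from \cite[Thm.\ 4.51]{orlik1992arrangements}, so the only meaningful comparison is with the classical Orlik--Terao argument that the citation points to. Your skeleton reproduces that argument's architecture faithfully: the inclusions $\alpha\DerA{\A'} \subseteq \DerA{\A} \subseteq \DerA{\A'}$, the exact sequence $0 \to \DerA{\A'} \xrightarrow{\alpha\cdot} \DerA{\A} \xrightarrow{r} \DerA{\A''}$ with $\Kern{r} = \alpha\DerA{\A'}$ (this is \cite[Prop.\ 4.44--4.45]{orlik1992arrangements}), Saito's criterion in the form ``$\det \neq 0$ plus $\sum_i \pdeg{\theta_i} = \Betrag{\A}$ implies basis,'' and the bookkeeping via \eqref{Sum_exp}. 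The Deletion implication as you sketch it is essentially Theorem \ref{thm:A_AoH_exp} (\cite[Thm.\ 4.46]{orlik1992arrangements}), and your descent argument for $S$-independence of the candidate family $\{\alpha\theta_\ell,\psi_1,\ldots,\psi_{\ell-1}\}$ is sound.

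There is, however, a genuine gap, and you have located it yourself without closing it: the existence of degree-preserving lifts $\psi_i \in \DerA{\A}$ of a basis of $\DerA{\A''}$ through $r$. Saying that ``the freeness hypotheses force $r$ to be onto in the relevant degrees'' is not a proof --- it is the theorem. The map $r$ is not surjective in general, and freeness of $\A''$ alone does not produce lifts: if you lift $\varphi \in \DerA{\A''}$ coefficientwise (say with $\alpha = x_1$ and $\tilde{\varphi}(x_1) = 0$), then for $H \in \A'$ you only get $\tilde{\varphi}(\alpha_H) \in \alpha_H S + \alpha S$, not $\tilde{\varphi}(\alpha_H) \in \alpha_H S$; whether $\tilde{\varphi}$ can be corrected by an element of $\alpha\Derivation{S}$ so as to land in $\DerA{\A}$ is exactly where the freeness of $\A'$ must enter, via a nontrivial graded argument (in Orlik--Terao this is the substance of the proof of Theorem 4.51, built on Theorem 4.46 and a determinant/degree analysis, not on abstract exactness). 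The same gap recurs in your third implication: ``peeling off the $\alpha$-divisible member of the basis'' of $\DerA{\A}$ presupposes that, after column operations, some basis element lies in $\Kern{r} = \alpha\DerA{\A'}$ while the remaining $\ell - 1$ restrict to a basis of $\DerA{\A''}$ --- again a surjectivity-in-degrees statement that must be extracted from the hypotheses that $\A$ and $\A''$ are free, and for which you give no mechanism. A smaller point: the factorization $\det{\theta_i(x_j)} \doteq \alpha \cdot \det{\bar{\theta}_i(\bar{x}_j)}$ needs care, since restriction drops the matrix size by one; it holds after normalizing $\alpha = x_1$ and expanding along the first column using $\theta_i(x_1) \in x_1 S$, and since $\PolyA{\A'}$ restricted to $H_0$ is in general only divisible by $\PolyA{\A''}$ (hyperplanes of $\A''$ can arise from several hyperplanes of $\A'$), the exact-degree claim you need is another place where the argument must be made, not asserted.
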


Choose a hyperplane $H_0 = \ker{\alpha_0} \in \A$. Let $\bar{S} = S / \alpha_0 S$.
If $\theta \in \DerA{\A}$, then $\theta(\alpha_0 S) \subseteq \alpha_0 S$.
Thus we may define $\Abb{\bar{\theta}}{\bar{S}}{\bar{S}}$ by $\bar{\theta}(f + \alpha_0 S) = \theta(f) + \alpha_0 S$.
Then $\bar{\theta} \in \DerA{\An{A''}}$, \cite[Def.\ 4.43, Prop.\ 4.44]{orlik1992arrangements}.

\begin{theorem}[{\cite[Thm.\ 4.46]{orlik1992arrangements}}]\label{thm:A_AoH_exp}
Suppose $\A$ and $\A'$ are free arrangements with $\A' := \A \setminus \{ H_0 \}$, $H_0 := \ker{\alpha_0}$. 
Then there is a basis $\{ \theta_1,\ldots,\theta_\ell\}$ for $\DerA{\A'}$ such that
\begin{enumerate}
\item[(1)] $\{ \theta_1,\ldots,\theta_{i-1},\alpha_0\theta_i,\theta_{i+1},\ldots,\theta_\ell\}$ is a basis for $\DerA{\A}$,
\item[(2)]  $\{ \bar{\theta}_1,\ldots,\bar{\theta}_{i-1},\bar{\theta}_{i+1},\ldots,\bar{\theta}_\ell \}$ is a basis for $\DerA{\A''}$.
\end{enumerate}
\end{theorem}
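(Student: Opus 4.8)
The plan is to realize $\DerA{\A}$ as the kernel of an $S$-linear map to $\bar S = S/\alpha_0 S$, extract from this a principal ``obstruction'' ideal that concentrates all the difference between $\DerA{\A}$ and $\DerA{\A'}$ into a single basis vector, and then transport the resulting basis to $\A''$ by restriction. Writing $\bar f$ for the image of $f$ in $\bar S$, the starting point is the hyperplane-wise description $\DerA{\A} = \{\theta \in \Derivation{S} \mid \theta(\alpha_H) \in \alpha_H S \ \forall H \in \A\}$, which gives $\DerA{\A} \subseteq \DerA{\A'}$ (as $\PolyA{\A} = \alpha_0 \PolyA{\A'}$) and, more precisely,
\[
\DerA{\A} = \{\theta \in \DerA{\A'} \mid \theta(\alpha_0) \in \alpha_0 S\} = \ker\phi, \qquad \phi\colon \DerA{\A'} \to \bar S,\ \ \phi(\theta) = \overline{\theta(\alpha_0)}.
\]
Since $\phi$ is $S$-linear, its image $I := \phi(\DerA{\A'})$ is an ideal of $\bar S$ with $\DerA{\A'}/\DerA{\A} \cong I$. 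Two facts will be used repeatedly: $\alpha_0 \DerA{\A'} \subseteq \DerA{\A}$ (so $\alpha_0$ annihilates $I$), and the restriction map $\rho\colon \DerA{\A} \to \DerA{\A''}$, $\theta \mapsto \bar\theta$, has kernel exactly $\alpha_0 \DerA{\A'}$ (a derivation restricting to $0$ has all coefficients in $\alpha_0 S$). Conclusions (1) and (2) then amount to isolating the obstruction in one basis vector and to checking a freeness criterion for the surviving restrictions.

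First I would show $I$ is principal. The inclusion $\DerA{\A} \hookrightarrow \DerA{\A'}$ is a map of free $S$-modules of equal rank $\ell$; in homogeneous bases it is a square matrix $U$, and comparing coefficient determinants via Saito's criterion yields $\det{U} \doteq \PolyA{\A}/\PolyA{\A'} = \alpha_0$, in particular $\det{U} \neq 0$ and $I \neq 0$. Thus $0 \to S^\ell \xrightarrow{U} S^\ell \to I \to 0$ is a free resolution, so $\operatorname{pd}_S I \le 1$; as $I\neq 0$ is $\alpha_0$-torsion, $\operatorname{pd}_S I = 1$, whence $\operatorname{depth}_S I = \ell-1$ by Auslander--Buchsbaum. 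Viewing $I$ over $\bar S$ this reads $\operatorname{depth}_{\bar S} I = \ell - 1 = \dim{\bar S}$, so $I$ is a maximal Cohen--Macaulay module over the regular ring $\bar S$, hence free; being a nonzero ideal, $I = (\bar c)$ is principal with $\bar c$ a nonzerodivisor.

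Next I would build the basis for (1). Take any homogeneous basis $\theta_1,\dots,\theta_\ell$ of $\DerA{\A'}$ with $\pdeg{\theta_j} = e_j$, put $c_j = \theta_j(\alpha_0)$, so $I = (\bar c_1,\dots,\bar c_\ell) = (\bar c)$ and $\deg \bar c_j = e_j$ when $\bar c_j \neq 0$. By graded Nakayama some $\bar c_i$ is a nonzero scalar multiple of $\bar c$, and then has minimal degree among the nonzero $\bar c_j$; for each $j \neq i$ write $\bar c_j = \bar w_j \bar c_i$ with $w_j \in S$ homogeneous of degree $e_j - e_i \ge 0$ (possible by minimality of $e_i$), and replace $\theta_j$ by $\theta_j - w_j \theta_i$. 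This is a graded, determinant-one change of basis after which $\overline{\theta_j(\alpha_0)} = 0$, i.e.\ $\theta_j \in \DerA{\A}$ for all $j \neq i$. Now $\alpha_0 \theta_i \in \DerA{\A}$ as well, and the coefficient determinant of $\{\theta_1,\dots,\theta_{i-1},\alpha_0\theta_i,\theta_{i+1},\dots,\theta_\ell\}$ is $\alpha_0$ times that of the $\theta_j$, hence $\doteq \alpha_0 \PolyA{\A'} = \PolyA{\A}$; by Saito's criterion this set is a basis of $\DerA{\A}$. This proves (1), with $\expAA{\A}$ obtained from $\expAA{\A'}$ by raising $e_i$ to $e_i + 1$.

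Finally, for (2), choose coordinates with $\alpha_0 = x_1$, so restriction drops the $\partial/\partial x_1$-component and reduces the remaining coefficients modulo $x_1$. After the adjustment the first column of the reduced $\ell \times \ell$ coefficient matrix $\bar M$ of $\theta_1,\dots,\theta_\ell$ (whose entries are the $\bar c_j$) vanishes except in row $i$; expanding along it gives $\det{\bar M} \doteq \bar c \cdot \det{\bar M'}$, where $\bar M'$ is the $(\ell-1)\times(\ell-1)$ coefficient matrix of the restricted derivations $\{\bar\theta_j\}_{j \neq i}$ on $\bar S$. Since $\det{\bar M} = \overline{\det{M}} \doteq \overline{\PolyA{\A'}}$, this gives $\det{\bar M'} \doteq \overline{\PolyA{\A'}}/\bar c$, which is nonzero; and $\bar S$-independence of $\{\bar\theta_j\}_{j\neq i}$ follows because any relation lifts into $\ker\rho = \alpha_0 \DerA{\A'}$, forcing the coefficients into $\alpha_0 S$. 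To conclude that these restrictions form a \emph{basis} of $\DerA{\A''}$ (whence $\A''$ is free), I would verify Saito's criterion on $\A''$, i.e.\ $\det{\bar M'} \doteq \PolyA{\A''}$. One divisibility, $\PolyA{\A''} \mid \det{\bar M'}$, is the easy half of the criterion since each $\bar\theta_j \in \DerA{\A''}$, so it suffices to match degrees: $\deg \det{\bar M'} = |\A'| - e_i$ must equal $\deg \PolyA{\A''} = |\A''|$. I expect this degree identity $|\A''| = |\A'| - e_i$ to be the main obstacle. I would obtain it either from the deletion--restriction formula for the characteristic polynomial together with Terao's factorization theorem, or, more in the present spirit, by localizing the ideal $I$ at each rank-two flat $X \in \A''$ and reducing to a pencil of lines, where one computes directly that $\bar c \doteq \prod_{X \in \A''}\alpha_X^{m_X - 1}$ with $m_X$ the number of hyperplanes of $\A'$ containing $X$.
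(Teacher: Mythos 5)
Your proof is correct, and since the paper quotes this statement verbatim from Orlik--Terao \cite[Thm.~4.46]{orlik1992arrangements} without reproducing a proof, there is no in-paper argument to measure it against; what you have written is a faithful and complete reconstruction of the classical Terao-style argument. The key steps all check out: $\DerA{\A}=\ker\phi$ with $\phi(\theta)=\overline{\theta(\alpha_0)}$, principality of the image ideal $I$ via $\operatorname{pd}_S I=1$, Auslander--Buchsbaum, and maximal Cohen--Macaulayness over the regular ring $\bar S$ (your observation that a free ideal in a domain must be principal of rank one is the right closing move), the graded Nakayama selection of the index $i$ with the unipotent basis adjustment $\theta_j\mapsto\theta_j-w_j\theta_i$, and Saito's criterion for part (1) since the coefficient determinant becomes $\alpha_0\cdot\det{M}\doteq \PolyA{\A}$. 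For part (2), the expansion $\det{\bar M}\doteq\bar c_i\det{\bar M'}$ and the easy half of Saito reduce everything to the degree identity $\Betrag{\A''}=\Betrag{\A'}-e_i$, which you correctly flag as the remaining obstacle; your first proposed route does close it in one line, since deletion--restriction gives $\CharPolyA{\A''}=\CharPolyA{\A'}-\CharPolyA{\A}=\prod_{j\neq i}(t-e_j)$ using part (1) and Theorem \ref{thm:A_free_factZ} applied to the two free arrangements $\A$ and $\A'$, whence $\Betrag{\A''}=\sum_{j\neq i}e_j=\Betrag{\A'}-e_i$. There is no circularity in invoking the Factorization Theorem here, as its proof in Orlik--Terao is independent of Theorem 4.46 (and both are treated as black boxes in this paper anyway). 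Your alternative route (b), computing the divisor of $\bar c$ by localizing at the rank-two flats in $H_0$, is also viable but considerably sketchier as stated (one must justify that localization of $\DerA{\A'}$ computes the local ideal and control the height-one primes of $\bar S$ dividing $\bar c$); since route (a) is airtight, this does not affect the verdict.
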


\begin{theorem}[Factorization {\cite[Thm.\ 4.137]{orlik1992arrangements}}]\label{thm:A_free_factZ}
If $\A$ is a free arrangement with $\expA{\A}{b_1,\ldots,b_\ell}$, then
\begin{equation*}
\CharPolyA{\A} = \prod_{i=1}^\ell (t - b_i).
\end{equation*}
\end{theorem}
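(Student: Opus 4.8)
The plan is to reduce to the irreducible case and then compute $\CharPolyA{\A}$ from the graded structure of $\DerA{\A}$ via the Solomon--Terao formula. For the reduction, suppose $\A = \A_1 \times \A_2$ is a nontrivial product. Then $\LA{\A} \cong \LA{\A_1} \times \LA{\A_2}$, so the M\"obius function is multiplicative and dimensions add, giving $\CharPolyA{\A} = \chi(\A_1,t)\,\chi(\A_2,t)$; by Proposition \ref{prop:Prod_free} the factors are free with $\expAA{\A} = \expAA{\A_1} \cup \expAA{\A_2}$, and since $\prod_i (t - b_i)$ splits accordingly, the statement for $\A$ follows from that for $\A_1$ and $\A_2$. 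Thus I would assume $\A$ irreducible; the base case $\ell \le 1$ is immediate, since for a single hyperplane $\DerA{\A} = S\cdot\theta$ with $\pdeg{\theta} = 1$, while $\CharPolyA{\A} = t - 1$.

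The tempting approach is a deletion induction on $\Betrag{\A}$: choose $H_0 \in \A$, form the triple $\TripleA{\A}$, use the deletion--restriction recursion $\CharPolyA{\A} = \chi(\A',t) - \chi(\A'',t)$ (a formal consequence of the M\"obius recursion), and track exponents with Addition--Deletion (Theorem \ref{thm:Addition_Deletion}) and Theorem \ref{thm:A_AoH_exp}. The crux, and the reason this does not finish the proof, is that freeness of $\A$ forces freeness of neither $\A'$ nor $\A''$; the induction only reaches arrangements admitting a chain of free subarrangements, i.e.\ the \emph{inductively} free ones, whereas the theorem must cover every free arrangement. So a purely combinatorial induction is insufficient, and the main obstacle is precisely to handle a free arrangement that is not inductively free.

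To treat the general case uniformly I would invoke the Solomon--Terao formula, which for an arbitrary arrangement expresses $\CharPolyA{\A}$ as a limit, as $x \to 1$, of an explicit alternating combination of the graded Hilbert series $\operatorname{Hilb}(D^{p}(\A),x)$ of the higher logarithmic derivation modules $D^{p}(\A)$, $0 \le p \le \ell$. The decisive simplification in the free case is that the canonical map $\bigwedge^{p}\DerA{\A} \to D^{p}(\A)$ is then an isomorphism: a homogeneous basis $\theta_1,\ldots,\theta_\ell$ of $\DerA{\A}$ with $\pdeg{\theta_i} = b_i$ yields the basis $\{\theta_{i_1}\wedge\cdots\wedge\theta_{i_p}\}_{i_1<\cdots<i_p}$ of $D^{p}(\A)$, so $D^{p}(\A)$ is free with exponents the sums $b_{i_1}+\cdots+b_{i_p}$ and
\begin{equation*}
\operatorname{Hilb}(D^{p}(\A),x) = \frac{e_p\!\left(x^{b_1},\ldots,x^{b_\ell}\right)}{(1-x)^{\ell}},
\end{equation*}
with $e_p$ the $p$-th elementary symmetric polynomial. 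Substituting these into the Solomon--Terao formula and evaluating the limit converts the graded weights into the elementary symmetric functions of the exponents, producing
\begin{equation*}
\CharPolyA{\A} = \sum_{p=0}^{\ell} (-1)^{p}\, e_p(b_1,\ldots,b_\ell)\, t^{\ell-p} = \prod_{i=1}^{\ell} (t - b_i).
\end{equation*}
Comparing the coefficient of $t^{\ell-1}$ recovers $\sum_i b_i = \Betrag{\A}$, consistent with \eqref{Sum_exp}. The genuine technical work lies in establishing the exterior-power identity $D^{p}(\A) = \bigwedge^{p}\DerA{\A}$ for free arrangements and in controlling the $x\to 1$ limit; the remaining manipulations are formal.
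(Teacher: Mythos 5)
The paper does not prove this statement at all: it is quoted directly from \cite[Thm.~4.137]{orlik1992arrangements}, and the proof given there is precisely the route you sketch (the Solomon--Terao formula together with the fact that for free $\A$ the modules $D^p(\A)$ are free with basis the $p$-fold wedges of a homogeneous basis of $\DerA{\A}$, hence have Hilbert series $e_p(x^{b_1},\ldots,x^{b_\ell})/(1-x)^\ell$, after which the $x \to 1$ limit yields $\prod_{i=1}^\ell(t-b_i)$). Your sketch is correct --- including the product reduction and the accurate observation that a deletion--restriction induction cannot work because freeness of $\A$ forces freeness of neither $\A'$ nor $\A''$ --- so it matches the argument of the cited source rather than diverging from it.
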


A very recent and remarkable result is due to Abe which connects the division of characteristic polynomials with freeness:

\begin{theorem}[Division theorem {\cite[Thm.~1.1]{AbeDivFree2015}}]\label{thm:Div_Thm}
Let $\A$ be a hyperplane arrangement and $\A \neq \emptA{\ell}$. Assume that there is a hyperplane $H \in \A$ such that
$\CharPolyA{\A^H}$ divides $\CharPolyA{\A}$ and $\A^H$ is free. Then $\A$ is free.
\end{theorem}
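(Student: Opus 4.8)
The plan is to prove freeness of $\A$ by an inductive local-to-global argument on the ambient dimension $\ell$, using the divisibility hypothesis $\CharPolyA{\A^H} \mid \CharPolyA{\A}$ only to pin down the characteristic polynomial of $\A$, and then invoking a freeness criterion for multiarrangements to upgrade freeness of the restriction $\A^H$ to freeness of $\A$ itself. The point is that the numerical datum carried by the divisibility is exactly what a local-global criterion needs as its ``global'' input, while freeness of $\A^H$ feeds the ``local'' input after a suitable transport.

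First I would record the deletion-restriction recursion for characteristic polynomials: for any triple $\TripleA{\A}$ with distinguished hyperplane $H_0$ one has $\CharPolyA{\A} = \CharPolyA{\A'} - \CharPolyA{\A''}$. Since $\A^H$ is free by hypothesis, Theorem \ref{thm:A_free_factZ} gives $\CharPolyA{\A^H} = \prod_i (t - e_i)$ with the $e_i$ the exponents of $\A^H$; combined with $\CharPolyA{\A^H} \mid \CharPolyA{\A}$ this forces $\CharPolyA{\A}$ to split off precisely these linear factors, so $\A$ can only be free with one predicted exponent multiset. This reduces the task to actually producing a homogeneous basis of $\DerA{\A}$ realizing those degrees.

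Second, the heart of the argument is the passage from the ordinary restriction $\A^H$ to the \emph{Ziegler multirestriction} $(\A^H, m^H)$, where $m^H$ is the natural multiplicity recording, for each hyperplane of $\A^H$, the number of hyperplanes of $\A$ meeting $H$ along it. The standard local-global criterion (Yoshinaga) states, for $\ell \geq 3$, that $\A$ is free if and only if the multiarrangement $(\A^H, m^H)$ is free and a single numerical invariant of $\A$ (the second Betti-type coefficient $b_2$, read off from $\CharPolyA{\A}$) matches the corresponding invariant of $(\A^H, m^H)$. I would check that the divisibility is exactly strong enough to force this numerical match, so that it suffices to prove freeness of $(\A^H, m^H)$; the latter I would establish by induction, restricting further inside $H$ so that the divisibility hypothesis descends, applying the inductive hypothesis in dimension $\ell - 1$, and running a multiarrangement Addition-Deletion (modeled on Theorem \ref{thm:Addition_Deletion}) to climb back to $(\A^H, m^H)$. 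The base case $\ell = 3$ is favorable, since every multiarrangement on two lines is free and Yoshinaga's criterion collapses to the purely numerical condition supplied by the factorization in step one.

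The main obstacle I anticipate is precisely the comparison between the ordinary restriction $\A^H$ and the multirestriction $(\A^H, m^H)$: freeness of the former does not formally entail freeness of the latter, and transporting the divisibility information through the Ziegler construction while controlling the multiplicities $m^H$ under deletion is where the real work lies. A secondary delicate point is the exponent bookkeeping — ensuring that the candidate degrees forced by the factorization are genuinely realized by the basis produced inductively, rather than merely being consistent numerically — which is what makes the final appeal to the local-global criterion legitimate.
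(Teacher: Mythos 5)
You should first note a point of calibration: the paper contains no proof of Theorem \ref{thm:Div_Thm} at all --- it is quoted verbatim from Abe \cite{AbeDivFree2015} and used as a black box --- so the only meaningful benchmark is Abe's original argument. Measured against that, your outline does reproduce the correct skeleton: monicity and degree count turn the hypothesis $\CharPolyA{\A^H} \mid \CharPolyA{\A}$ into the identity $\CharPolyA{\A} = \left(t - \Betrag{\A} + \Betrag{\A^H}\right)\CharPolyA{\A^H}$, whence by comparing $t^{\ell-2}$-coefficients the equality $b_2(\A) = b_2(\A^H) + \Betrag{\A^H}\left(\Betrag{\A} - \Betrag{\A^H}\right)$, which is exactly the numerical input of the Abe--Yoshinaga refinement of Yoshinaga's criterion via the Ziegler multirestriction $(\A^H, m^H)$; and the induction on $\ell$ with base case $\ell = 3$, where every $2$-multiarrangement is free, is the right frame. (Your opening deletion-restriction recursion $\CharPolyA{\A} = \CharPolyA{\A'} - \CharPolyA{\A''}$ is never actually used; the coefficient comparison above is what does the work.)

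However, as a proof the proposal has genuine gaps at exactly the two load-bearing points, and the one concrete mechanism you offer for the harder of them would fail. First, freeness of $(\A^H, m^H)$ cannot be obtained by ``running a multiarrangement Addition-Deletion modeled on Theorem \ref{thm:Addition_Deletion}'': no unconditional analogue of Theorem \ref{thm:Addition_Deletion} exists for multiarrangements (the Abe--Terao--Wakefield version needs Euler restrictions and hypotheses you cannot verify here, and deletion does not even interact sensibly with $m^H$, which is defined by the ambient $\A$ rather than intrinsically). In Abe's proof the multirestriction's freeness is not produced by any such climb; it comes from the induction hypothesis applied to the localizations $\A_X$ for $X$ inside $H$ (these are then free, so Ziegler's theorem makes the localized multirestrictions free), combined with a local-to-global $b_2$-type criterion for multiarrangements. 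Second, that induction can only be launched after one proves that the division hypothesis \emph{localizes}; your phrase ``restricting further inside $H$ so that the divisibility hypothesis descends'' asserts precisely the crux without proof. Divisibility is a global polynomial identity, and Abe's mechanism for localizing it is to pass to the single $b_2$-equality above and decompose the discrepancy into a sum of local contributions over $X \in \LAq{\A^H}{2}$, each of a fixed sign, so that the global equality forces every local equality and hence, inductively, local freeness of $\A$ along $H$ --- which is also indispensable for the criterion when $\ell \geq 4$. Nothing in your sketch supplies this decomposition. Finally, even the base case is not ``purely numerical for free'': Yoshinaga's $3$-dimensional criterion demands that $\expAA{(\A^H,m^H)}$ coincide with the pair $\left(\Betrag{\A^H}-1,\, \Betrag{\A}-\Betrag{\A^H}\right)$ of roots forced by the factorization, and since only the \emph{sum} of the multirestriction exponents is automatic, identifying the pair requires Abe's inequality relating $b_2$ to the product of those exponents --- exactly the ``exponent bookkeeping'' you flag as delicate but leave unresolved. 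In short: right framework, honestly flagged obstacles, but the hard steps are missing and the proposed multi-addition-deletion bridge is not a viable tool.
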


\subsection{Inductively, recursively and divisionally free arrangements}
Theorem \ref{thm:Addition_Deletion} motivates the following two definitions of classes of free arrangements.

\begin{definition}[{\cite[Def.\ 4.53]{orlik1992arrangements}}] \label{def:IF}
The class $\IFC$ of \emph{inductively free} arrangements is the smallest class of arrangements which satisfies
\begin{enumerate}
\item The empty arrangement $\emptA{\ell}$ of rank $\ell$ is in $\IFC$ for $\ell \geq 0$,
\item if there exists a hyperplane $H_0 \in \A$ such that $\A'' \in \IFC$, $\A' \in \IFC$, and
$\expAA{\A''} \subset \expAA{\A'}$, then $\A$ also belongs to $\IFC$. 
\end{enumerate}
\end{definition}

\begin{example}\label{ex:SS_IF}
All supersolvable arrangements are inductively free by \cite[Thm.\ 4.58]{orlik1992arrangements}.
\end{example}

\begin{definition}[{\cite[Def.\ 4.60]{orlik1992arrangements}}] \label{def:RF}
The class $\RFC$ of \emph{recursively free} arrangements is the smallest class of arrangements which satisfies
\begin{enumerate}
\item The empty arrangement $\emptA{\ell}$ of rank $\ell$ is in $\RFC$ for $\ell \geq 0$,
\item if there exists a hyperplane $H_0 \in \A$ such that $\A'' \in \RFC$, $\A' \in \RFC$, and
$\expAA{\A''} \subset \expAA{\A'}$, then $\A$ also belongs to $\RFC$,
\item if there exists a hyperplane $H_0 \in \A$ such that $\A'' \in \RFC$, $\A \in \RFC$, and
$\expAA{\A''} \subset \expAA{\A}$, then $\A'$ also belongs to $\RFC$. 
\end{enumerate}
\end{definition}

Note that we have:
\begin{equation*}
\IFC \subsetneq \RFC \subsetneq \{ \text{ \emph{free arrangements} } \},
\end{equation*}
where the properness of the last inclusion was only recently established by Cuntz and Hoge in \cite{MR3272729}.

Furthermore, similarly to the class $\IFC$ of inductively free arrangements, Theorem \ref{thm:Div_Thm} motivates the following class 
of free arrangements:

\begin{definition}[{\cite[Def.~4.3]{AbeDivFree2015}}]\label{def:DF}
The class $\DFC$ of \emph{divisionally free} arrangements is the smallest class of arrangements which satisfies
\begin{enumerate}
\item If $\A$ is an $\ell$-arrangement, $\ell \leq 2$, or $\A = \emptA{\ell}$, $\ell \geq 3$,
then $\A$ belongs to $\DFC$,
\item if there exists a hyperplane $H_0 \in \A$ such that $\A'' \in \DFC$ and
$\CharPolyA{\A^{H_0}} \mid \CharPolyA{\A}$, then $\A$ also belongs to $\DFC$. 
\end{enumerate}
\end{definition}

Abe showed that the new class of divisionally free arrangements properly contains the class of inductively free
arrangements:
\begin{equation*}
\IFC \subsetneq \DFC,
\end{equation*}
by \cite[Thm.~1.6]{AbeDivFree2015}.
He conjectured that there are arrangements which are divisionally free but not recursively free.
Our classification of recursively free reflection arrangements in this paper provides examples to confirm his conjecture
(see Theorem \ref{thm:ConjAbe} and Section \ref{sec:Abes_Conjecture}). 

The next easy lemma will be useful to disprove the recursive freeness of a given arrangement:

\begin{lemma}\label{lem:A_u_H}
Let $\A$ and $\A' = \A \setminus \{H\}$ be free arrangements
and $L := \LA{\A'}$.
Let $P_H := \{ X \in L_2 \mid X \subseteq H \} = \A'' \cap L_2$,
then $\sum_{X \in P_H} (\Betrag{\A'_{X}} - 1) \in \expAA{\A'}$,
and if $\A'$ is irreducible then $\sum_{X \in P_H} (\Betrag{\A'_{X}} - 1) \neq 1$.
\end{lemma}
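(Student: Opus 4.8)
The plan is to relate the quantity $\sum_{X \in P_H} (\Betrag{\A'_{X}} - 1)$ to the exponents of $\A'$ via the Addition-Deletion machinery applied at the hyperplane $H$. Since both $\A$ and $\A' = \A \setminus \{H\}$ are free, and $\A = (\A')'' $ differs from $\A'$ by exactly the hyperplane $H$, I would invoke Theorem \ref{thm:A_AoH_exp}: there is a basis $\{\theta_1,\dots,\theta_\ell\}$ for $\DerA{\A'}$ such that multiplying one $\theta_i$ by $\alpha_H$ yields a basis for $\DerA{\A}$, and the restrictions $\bar\theta_j$ for $j \neq i$ form a basis for $\DerA{\A^H}$. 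Consequently $\expAA{\A^H} = \expAA{\A'} \setminus \{\pdeg \theta_i\}$, so the ``missing'' exponent $\pdeg \theta_i$ of $\A'$ is precisely the difference $\Betrag{\A'} - \sum_{e \in \expAA{\A^H}} e$ (using the exponent-sum identity \eqref{Sum_exp}). This exhibits a specific element of $\expAA{\A'}$ as a controlled quantity, which is the engine of the proof.

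The key step is then to identify this distinguished exponent with $\sum_{X \in P_H}(\Betrag{\A'_X} - 1)$. First I would note that $\A^H$ is a rank-$(\ell-1)$ restriction, and I would want to understand its defining data over the elements $X \in P_H$, i.e. the rank-$2$ flats of $\A'$ contained in $H$. The observation is that each such $X$, being a rank-$2$ flat lying in $H$, contributes to $\A^H$ a point (rank-$1$ flat in $X \cap H$) through which pass exactly $\Betrag{\A'_X} - 1$ of the restricted hyperplanes --- the $\Betrag{\A'_X}$ hyperplanes of $\A'$ through $X$, minus the adjustments coming from $H$ itself. I expect the cleanest route is to compute the exponent via the characteristic polynomial: by Theorem \ref{thm:A_free_factZ} the exponents of $\A'$ are the roots of $\CharPolyA{\A'}$, and the distinguished exponent can be read off from the local intersection data along $H$, matching the sum $\sum_{X \in P_H}(\Betrag{\A'_X}-1)$ coming from the multiplicities of the restricted arrangement.

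The irreducibility claim $\sum_{X \in P_H}(\Betrag{\A'_X}-1) \neq 1$ I would argue by contraposition. If this sum equalled $1$, then one of the exponents of $\A'$ is $1$, and moreover the structure forces $\A'$ to split off a one-dimensional factor: an exponent equal to $1$ together with the constraint that all but one of the rank-$2$ localizations along $H$ are trivial (each $\Betrag{\A'_X} - 1 = 0$ means $X$ is a normal-crossing point) typically signals that $\A'$ has the empty arrangement $\emptA{1}$ as a product factor, contradicting irreducibility via Proposition \ref{prop:Prod_free}. I would make this precise by showing that exponent $1$ arises exactly from the Euler derivation in a decomposable situation, or equivalently that $H$ meets every other flat transversally, forcing a product decomposition.

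The main obstacle I anticipate is the bookkeeping in the second step: correctly accounting for how each rank-$2$ flat $X \in P_H$ contributes $\Betrag{\A'_X} - 1$ to the relevant exponent, and ensuring the restriction $\A^H$'s local data is read off without double-counting $H$ or miscounting the hyperplanes of $\A'$ through each $X$. The passage from the abstract ``missing exponent'' given by Theorem \ref{thm:A_AoH_exp} to the explicit combinatorial sum over $P_H$ is where the real content lies, and I would verify it carefully on the low-rank localizations before asserting it in general.
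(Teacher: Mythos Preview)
Your main strategy---use Theorem~\ref{thm:A_AoH_exp} together with \eqref{Sum_exp} to extract an exponent $b \in \expAA{\A'}$ with $b = \Betrag{\A'} - \Betrag{\A^H}$, then identify $b$ combinatorially---is exactly the paper's approach. The combinatorial identification is also simpler than you fear: the identity $\Betrag{\A^H} = \Betrag{\A'} - \sum_{X \in P_H}(\Betrag{\A'_X}-1)$ is immediate counting, since each $X \in P_H$ is itself a hyperplane of $\A^H$ and the $\Betrag{\A'_X}$ hyperplanes of $\A'$ through $X$ all restrict to that single element of $\A^H$. No detour through $\CharPolyA{\A'}$ is needed, and your description of $X$ as a ``point (rank-$1$ flat in $X\cap H$)'' is garbled: since already $X \subseteq H$, the flat $X$ has codimension one in $H$ and \emph{is} a hyperplane of $\A^H$, not a point through which several restricted hyperplanes pass.

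Your irreducibility argument, however, is backwards and is a genuine gap. Every nonempty central arrangement has $1$ among its exponents via the Euler derivation, so the mere appearance of $1$ in $\expAA{\A'}$ cannot signal a product decomposition; your proposed contrapositive therefore has no force. The correct argument (which the paper invokes without spelling out) runs in the opposite direction: irreducibility of $\A'$ means the multiplicity of $1$ in $\expAA{\A'}$ is exactly one. Since $\A'' = \A^H$ is again a nonempty central arrangement and hence has $1 \in \expAA{\A''}$, and since $\expAA{\A''} = \expAA{\A'} \setminus \{b\}$ by Theorem~\ref{thm:A_AoH_exp}, the unique $1$ must survive the deletion, forcing $b \neq 1$. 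Equivalently, the distinguished $\theta_i$ in Theorem~\ref{thm:A_AoH_exp} necessarily lies outside $\DerA{\A}$, whereas the Euler derivation lies in $\DerA{\A}$; when $\A'$ is irreducible the Euler derivation spans the degree-$1$ part of $\DerA{\A'}$, so $\pdeg\theta_i \neq 1$.
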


\begin{proof}
By Theorem \ref{thm:A_AoH_exp} and (\ref{Sum_exp}) there is a $b \in \expAA{\A'}$, such that 
$\Betrag{\A^{H}} = \Betrag{\A'} - b$ and if $\A'$ is irreducible, then $b \neq 1$.
Since $\Betrag{\A^{H}} =  \Betrag{\A'} - \sum_{X \in P_H} (\Betrag{\A'_X} -1)$,
the claim directly follows.
\end{proof}

The next two results are due to Hoge, R\"ohrle, and Schauenburg, \cite{HRS15}.

\begin{proposition}[{\cite[Thm.\ 1.1]{HRS15}}]\label{prop:Arf_AXrf}
Let $\A$ be a recursively free arrangement and $X \in \LA{\A}$.
Then $\A_X$ is recursively free.
\end{proposition}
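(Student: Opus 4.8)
The plan is to proceed by induction on the length of a shortest sequence of Addition and Deletion steps (items~(2) and~(3) of Definition~\ref{def:RF}) that witnesses $\A \in \RFC$, and to prove the (formally stronger) assertion that the localization $\A_X$ is recursively free for \emph{every} $X \in \LA{\A}$. It is convenient to allow $X$ to be an arbitrary subspace of $V$: since $\A_X = \A_Y$ for $Y := \bigcap_{H \in \A_X} H \in \LA{\A}$, localizing at $X$ always reduces to localizing at a genuine lattice element, so I may freely pass between the two. The base case $\A = \emptA{\ell}$ is immediate, as then $\A_X = \emptA{\ell} \in \RFC$. For the inductive step I fix the hyperplane $H_0$ distinguished in the final construction step; the arrangements entering that step---namely $\A'$ and $\A''$ for an Addition, and $\A$ and $\A''$ for a Deletion---possess strictly shorter witnesses, so the induction hypothesis furnishes recursive freeness of \emph{all} their localizations.

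The geometric core is that localization commutes with forming the triple $\TripleA{\A}$ associated with $H_0$. If $X \not\subseteq H_0$, then $H_0 \notin \A_X$ and hence $\A_X = \A'_X$, so the claim follows at once from the induction hypothesis. If $X \subseteq H_0$, then $H_0 \in \A_X$ and a direct inspection of the definitions gives the identities
\begin{equation*}
\A_X \setminus \{H_0\} = \A'_X
\qquad\text{and}\qquad
(\A_X)^{H_0} = \A''_X ,
\end{equation*}
the second because $K = H_0 \cap H \in \A''$ contains $X$ precisely when $H \in \A_X \setminus \{H_0\}$. Thus $(\A_X, \A'_X, \A''_X)$ is itself a triple with respect to $H_0$, in which the two localizations corresponding to the arrangements of the final step are recursively free by the induction hypothesis (that is, $\A'_X$ and $\A''_X$ in the Addition case, and $\A_X$ and $\A''_X$ in the Deletion case).

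It remains to verify the exponent-containment clauses required by Definition~\ref{def:RF}, and this is the crux, for these inclusions are not manifestly stable under localization. Here I invoke the standard fact that a localization of a free arrangement is again free, so that $\A_X$ and $\A'_X$ are free. Applying Theorem~\ref{thm:A_AoH_exp} to the free pair $\A'_X = \A_X \setminus \{H_0\}$ and $\A_X$ yields a single homogeneous basis exhibiting $\A''_X$ as free and delivering \emph{both} inclusions $\expAA{\A''_X} \subset \expAA{\A'_X}$ and $\expAA{\A''_X} \subset \expAA{\A_X}$. In the Addition case the first inclusion is exactly the hypothesis of item~(2), so $\A_X \in \RFC$; in the Deletion case the second is the hypothesis of item~(3), so $\A'_X \in \RFC$. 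The main obstacle is conceptual rather than computational: the exponent inclusions demanded by recursive freeness cannot be extracted from the inductive data directly, and one must route through the freeness of the localized arrangements together with the Addition--Deletion basis of Theorem~\ref{thm:A_AoH_exp} to recover them.
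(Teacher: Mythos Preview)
Your argument is correct. The paper does not give its own proof of this proposition; it merely cites \cite[Thm.~1.1]{HRS15} and uses the result as a black box. Your inductive argument on the length of a recursive-freeness witness, together with the compatibility of localization with triples and the observation that Theorem~\ref{thm:A_AoH_exp} (applied to the free pair $\A_X$, $\A'_X$) supplies \emph{both} exponent inclusions $\expAA{\A''_X} \subset \expAA{\A'_X}$ and $\expAA{\A''_X} \subset \expAA{\A_X}$, is precisely the standard route and is essentially the argument of \cite{HRS15}. The only external ingredient you invoke that is not explicitly stated in the present paper is that localizations of free arrangements are free (Orlik--Terao, Thm.~4.37); this is indeed standard and its use is unavoidable here.
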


Hoge and R\"ohrle have shown in \cite[Prop.\ 2.10]{2012arXiv1208.3131H}
that the product construction is compatible with the notion of inductively free arrangements.

The following refines the statement for recursively free arrangements:

\begin{proposition}[{\cite[Thm.\ 1.2]{HRS15}}]\label{prop:Prod_RF}
Let $(\A_1,V_1), (\A_2,V_2)$ be two arrangements. Then  $\A = (\A_1 \times \A_2,V_1 \oplus V_2)$
is recursively free if and only if both $(\A_1,V_1)$ and $(\A_2,V_2)$ are recursively free and in that case the multiset of 
exponents of $\A$ is given by $\expAA{\A} = \expAA{\A_1} \cup \expAA{\A_2}$.
\end{proposition}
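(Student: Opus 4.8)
The plan is to separate the two assertions. Freeness of the product together with the exponent formula $\expAA{\A} = \expAA{\A_1}\cup\expAA{\A_2}$ is already supplied by Proposition \ref{prop:Prod_free}, so in both directions only the statements about membership in $\RFC$ require work. Throughout I would use the two elementary identities for a triple taken with respect to a hyperplane $H\oplus V_2$ coming from the first factor, namely $(\A_1\times\A_2)\setminus\{H\oplus V_2\} = \A_1'\times\A_2$ and $(\A_1\times\A_2)^{H\oplus V_2} = \A_1^{H}\times\A_2$ (and symmetrically for the second factor), which follow from a direct computation of the intersections $ (H_1\oplus V_2)\cap(V_1\oplus H_2)=H_1\oplus H_2$; together with the multiset cancellation rule $M\cup E\subseteq N\cup E$ if and only if $M\subseteq N$. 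The latter is exactly what lets the exponent condition of Definition \ref{def:RF} transport across a product, since adjoining the fixed multiset $\expAA{\A_2}$ to both sides of an inequality preserves it.

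For the direction ``both factors recursively free $\Rightarrow$ product recursively free'' I would fix $\A_2\in\RFC$ and argue by rule induction along the construction of $\A_1$ in the sense of Definition \ref{def:RF}. In the base case $\A_1=\emptA{\ell_1}$ one must first check that $\emptA{\ell_1}\times\A_2\in\RFC$; this is a separate, easy rule induction on $\A_2$, using that every hyperplane of $\emptA{\ell_1}\times\A_2$ comes from $\A_2$ and that adjoining $\ell_1$ zero exponents to both sides of each exponent inequality leaves it intact. For the inductive step, a construction of $\A_1$ via Addition from $\A_1',\A_1''$, or via Deletion from a larger $\B$ and $\B''$, is lifted verbatim to the product using the two identities above: the arrangements $\A_1'\times\A_2$, $\A_1''\times\A_2$ (resp.\ $\B\times\A_2$, $\B''\times\A_2$) are recursively free by the induction hypothesis, and the exponent condition survives the multiset union by the cancellation rule, whence $\A_1\times\A_2\in\RFC$.

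For the converse I would argue by rule induction on the construction of $\A=\A_1\times\A_2$, proving the stronger statement that \emph{for every} product decomposition $\A=\A_1\times\A_2$ both factors lie in $\RFC$. The base case is trivial. In an Addition step the distinguished hyperplane belongs to the product $\A=\A_1\times\A_2$ and therefore automatically respects the decomposition, say $H_0=K\oplus V_2$; then $\A'=\A_1'\times\A_2$ and $\A''=\A_1^{K}\times\A_2$, the induction hypothesis yields $\A_1',\A_1^{K},\A_2\in\RFC$, the cancellation rule peels $\expAA{\A_2}$ off the inequality $\expAA{\A''}\subseteq\expAA{\A'}$ to give $\expAA{\A_1^{K}}\subseteq\expAA{\A_1'}$, and one more Addition reconstructs $\A_1\in\RFC$. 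A Deletion step whose deleted hyperplane $H_0$ likewise respects $V_1\oplus V_2$ is handled in exactly the same manner, applying the Deletion rule to the corresponding factor.

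The hard part is the Deletion step in which the hyperplane $H_0=\ker(\alpha_1+\alpha_2)$ is \emph{diagonal}, that is, $\alpha_1\in V_1^*$ and $\alpha_2\in V_2^*$ are both nonzero. Here $\A=\mathcal{D}\setminus\{H_0\}$ is recovered from the strictly larger arrangement $\mathcal{D}=\A\cup\{H_0\}$, and $\mathcal{D}$ no longer admits $V_1\oplus V_2$ as a product decomposition; the same is true of its restriction $\mathcal{D}''=\mathcal{D}^{H_0}$. Consequently the induction hypothesis, which speaks only about the product decompositions these arrangements actually possess, carries no information about $\A_1$ and $\A_2$ separately. This is precisely the feature that makes the recursively free case strictly harder than the inductively free analogue, which has no Deletion rule. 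To overcome it I would invoke the uniqueness of the decomposition of an arrangement into irreducible factors: a diagonal $H_0$ merely merges the irreducible components of $\A$ that it links, so $\mathcal{D}$ splits along the resulting coarser decomposition, and I would track how the deleted hyperplane interacts with this splitting, combined with the localizations $\A_{T(\A_1)\oplus V_2}=\A_1\times\emptA{\ell_2}$ and $\A_{V_1\oplus T(\A_2)}=\emptA{\ell_1}\times\A_2$, which are recursively free by Proposition \ref{prop:Arf_AXrf}, in order to reconstruct recursively free certificates for $\A_1$ and $\A_2$. Making this bookkeeping precise is the main technical obstacle.
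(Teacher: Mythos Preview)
The paper does not give its own proof of this proposition; it is quoted verbatim from \cite[Thm.~1.2]{HRS15} and used as a black box. So there is no in-paper argument to compare against.

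As for your attempt: the forward direction is fine. Your rule induction on the construction of $\A_1$, transported through the identities $(\A_1\times\A_2)\setminus\{H\oplus V_2\}=\A_1'\times\A_2$ and $(\A_1\times\A_2)^{H\oplus V_2}=\A_1^{H}\times\A_2$ together with Proposition~\ref{prop:Prod_free} and multiset cancellation, is correct and complete.

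The converse, however, genuinely has the gap you flag, and your sketch at the end does not close it. The diagonal Deletion case is not a corner case: already for $\A_1=\A_2=\{0\}\subset\Komplex$, the Boolean arrangement $\A'=\A_1\times\A_2$ arises via Deletion from the irreducible $A_2$-arrangement $\A=\A'\cup\{x_1=x_2\}$, so the induction hypothesis, phrased as ``all product factors of $\A$ and $\A''$ are recursively free'', gives you nothing about $\A_1$ and $\A_2$. Passing to the irreducible decomposition $\A'=\prod_i\C_i$ and writing $H_0=H_0^U\oplus U'$ with $U=\bigoplus_{i\in S}W_i$ as you propose does show, after one more Deletion, that $\prod_{i\in S}\C_i$ and $\prod_{i\notin S}\C_i$ are each recursively free; but you then need each individual $\C_i$ to be recursively free, which is exactly the statement you are trying to prove, now for strictly smaller products. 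A plain rule induction cannot absorb this recursion. You would need to upgrade to a double induction (for instance on $\dim V$ together with the length of the recursive-free certificate), or prove separately that recursive freeness is invariant under essentialization. Both routes are doable but neither is the one-line bookkeeping you suggest. Your localisation idea via Proposition~\ref{prop:Arf_AXrf} is indeed the right simplification---it reduces the converse immediately to the special case $\A_2=\emptA{\ell_2}$---but that special case still carries the same diagonal-Deletion obstruction, so it does not by itself finish the argument. For a complete proof you should consult \cite{HRS15}.
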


\subsection{Reflection arrangements}

Let $V = \Komplex^\ell$ be a finite dimensional complex vector space.
An element $s \in \GeneralLinearGroup{V}$ of finite order with fixed point set
$V^s = \{ x \in V \mid sx =x \} =H_s$ a hyperplane in $V$ is called a \emph{reflection}.
A finite subgroup $W \leq \GeneralLinearGroup{V}$ which is generated by reflections 
is called a \emph{finite complex reflection group}.

The finite complex reflection groups were classified by Shephard and Todd, \cite{ST_1954_fcrg}. 

Let $W \leq \GeneralLinearGroup{V}$ be a finite complex reflection group acting on the vector space $V$.
The \emph{reflection arrangement} $(\A(W),V)$ is the arrangement of hyperplanes
consisting of all the reflecting hyperplanes of reflections of $W$.

Terao \cite{Terao1980} has shown that each reflection arrangement is free, see also \cite[Prop.~6.59]{orlik1992arrangements}. 

The complex reflection group $W$ is called \emph{reducible} if $V = V_1 \oplus V_2$ where $V_i$ are stable under $W$.
Then the restriction $W_i$ of $W$ to $V_i$ is a reflection group in $V_i$.
In this case the reflection arrangement $(\A(W),V)$ is the product of the two 
reflection arrangements $(\A(W_1),V_1)$ and $(\A(W_2),V_2)$.
The complex reflection group $W$ is called \emph{irreducible} if it is not reducible,
and then the reflection arrangement $\A(W)$ is irreducible.

For later purposes, we now look at the action of a finite complex reflection group $W$ on its
associated reflection arrangement $\A(W)$ and (reflection) subarrangements $\A(W') \subseteq \A(W)$
corresponding to reflection subgroups $W' \leq W$.

Let $W$ be a finite complex reflection group and $\A := \A(W)$.
Then $W$ acts on the set $\SAA := \{ \B \mid \B \subseteq \A \}$ of subarrangements of $\A$
by $w.\B = \{ w.H \mid H \in \B\}$ for $\B \in \SAA$.
The \emph{(setwise) stabilizer} $S_\B$ of $\B$ in $W$ ist defined by $S_\B = \{ w \in W \mid w.\B=\B \}$.
We denote by $W.\B = \{w.\B \mid w \in W\} \subseteq \SAA$ the orbit of $\B$ under $W$.

The following lemma is similar to a statement from \cite[Lem.~6.88]{orlik1992arrangements}.

\begin{lemma}\label{lem:Action_W_RSA}
Let $W$ be a finite complex reflection group, $\A := \A(W)$, and $\SAA = \{ \B \mid \B \subseteq \A \}$.
Let $\B := \A(W') \in \SAA$ be a reflection subarrangement for a reflection subgroup $W' \leq W$.
Then $S_\B = N_W(W')$ and $\Betrag{W.\B} =  
\Betrag{W:S_\B} = \Betrag{W:N_W(W')}$.
\end{lemma}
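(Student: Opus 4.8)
The plan is to prove the two asserted equalities separately, observing that the second is purely formal once the first is established. Since $W$ acts on $\SAA$ and $S_{\B}$ is by definition the stabilizer of $\B$ under this action, the orbit--stabilizer theorem gives $\Betrag{W.\B} = \Betrag{W:S_{\B}}$ at once; granting the identity $S_{\B} = N_W(W')$ then turns this into $\Betrag{W.\B} = \Betrag{W:N_W(W')}$ and completes the count. Hence the real content of the lemma is the equality $S_{\B} = N_W(W')$, which I would prove by a double inclusion.

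Both inclusions rest on the elementary identity $w.H_s = H_{wsw^{-1}}$ for $w \in W$ and a reflection $s$ with reflecting hyperplane $H_s = V^s$: if $x \in w.H_s$ then $w^{-1}x$ is fixed by $s$, so $x$ is fixed by $wsw^{-1}$, and conversely; as $wsw^{-1}$ is again a reflection, applying $w$ to a reflecting hyperplane of $W'$ returns the reflecting hyperplane of a conjugate reflection. For the inclusion $N_W(W') \subseteq S_{\B}$ I would take $w \in N_W(W')$, so conjugation by $w$ permutes the reflections of $W'$; by the identity it permutes their hyperplanes, i.e.\ the members of $\B = \A(W')$, whence $w.\B = \B$ and $w \in S_{\B}$.

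For the reverse inclusion $S_{\B} \subseteq N_W(W')$ I would take $w \in S_{\B}$ and form the conjugate reflection subgroup $W'' := wW'w^{-1}$. The identity gives $\A(W'') = w.\A(W') = w.\B = \B = \A(W')$, so $W'$ and $W''$ are reflection subgroups sharing the same arrangement, and the task reduces to deducing $W'' = W'$. The natural route is to reconstruct $W'$ intrinsically from $\B$, namely to identify $W'$ with the group generated by the reflections of $W$ whose hyperplanes lie in $\B$; conjugation by $w$ then carries this generating set bijectively onto the reflections with hyperplanes in $w.\B = \B$, which is the same set, forcing $W'' = W'$ and hence $w \in N_W(W')$.

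The step I expect to be the main obstacle is exactly this reconstruction of $W'$ from $\B$. In the real case underlying the analogous \cite[Lem.~6.88]{orlik1992arrangements} each hyperplane supports a unique reflection, so a reflection subgroup literally \emph{is} the group generated by the reflections at the hyperplanes of its arrangement, and the reconstruction is immediate. Over $\Komplex$ a hyperplane may support several reflections, and the fact one must invoke is the structural result that the pointwise stabilizer $W_H$ of each $H \in \A(W)$ is cyclic: the reflections of $W'$ about $H$ generate the subgroup $W' \cap W_H$ of this cyclic group, and I would control these intersections hyperplane by hyperplane, using that conjugation preserves element orders and that a cyclic group has a unique subgroup of each order. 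Making this order-bookkeeping compatible with the permutation of $\B$ induced by $w$ is the delicate point, and it is where the complex case genuinely departs from the real one.
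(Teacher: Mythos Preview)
Your strategy coincides with the paper's: orbit--stabilizer for the count, and the identity $w.H_s = H_{wsw^{-1}}$ to relate $S_\B$ and $N_W(W')$. The paper compresses your double inclusion into a chain of equalities, passing directly from ``$w.H_r \in \B$ for every reflection $r \in W'$'' to ``the $W$-conjugate of $r$ lies in $W'$''; this is precisely the reconstruction step you isolate as delicate, and the paper simply asserts it without further comment.

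You are right to be cautious, because that step cannot be completed in general and the lemma \emph{as stated} is false. Take $W = \crgM{4}{1}{2}$ and the reflection subgroup $W' = \langle \diag{-1,1},\, \diag{1,i} \rangle$, so that $\B = \A(W') = \{E_1,E_2\}$. The coordinate swap $w \in W$ interchanges $E_1$ and $E_2$, hence $w \in S_\B$; but $wW'w^{-1} = \langle \diag{1,-1},\, \diag{i,1} \rangle \neq W'$, so $w \notin N_W(W')$. Your proposed bookkeeping breaks exactly here: conjugation by $w$ sends $W'\cap W_{E_1}$ (order $2$) isomorphically onto $wW'w^{-1}\cap W_{E_2}$ (order $2$), which you would have to match with $W'\cap W_{E_2}$ (order $4$). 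The orders are preserved along the permutation of $\B$, not hyperplane by hyperplane, so the unique-subgroup-of-each-order trick gives no purchase.

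What makes both your argument and the paper's go through in the intended applications is a tacit extra hypothesis: every reflection of $W$ has order $2$. This holds for $W = \crg{31}$ (and for the subgroups $\crg{29}$, $\crgM{4}{2}{4}$, $\crgM{4}{4}{4}$ to which the lemma is later applied), so each hyperplane supports a unique reflection, a reflection subgroup is literally the group generated by the reflections at its hyperplanes, and the reconstruction is immediate, exactly as in the real case you cite. Under that hypothesis your proof and the paper's are correct and essentially identical; without it, neither can be completed.
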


\begin{proof}
Let $W$, $W'$, $\A$, and $\B$ be as above.
Let $S_\B$ be the stabilizer of $\B$ in $W$.
It is clear by the Orbit-Stabilizer-Theorem that $\Betrag{W.\B} = \Betrag{W:S_\B}$.
Let $H_r \in W'$ for a reflection $r \in W'$, then $w.H_r = H_{w^{-1}rw}$.
So we have
\begin{align*}
S_\B  = & \quad \{ w \in W \mid w.H_r \in \B \text{ for all reflections } r \in W' \} \\
= & \quad \{ w \in W \mid w^{-1}rw \in W' \text{ for all reflections } r \in W'\} \\
= & \quad N_W(W').
\end{align*}
The last equality is because $W'$ is by definition generated by the reflections it contains
and the group normalizing all generators of $W'$ is the normalizer of $W'$.
\end{proof}

The following theorem proved by Barakat, Cuntz, Hoge and R\"ohrle, which provides a classification of all inductively free
reflection arrangments, is our starting point for inspecting the recursive freeness of reflection arrangements:

\begin{theorem}[{\cite[Thm.~1.1]{2012arXiv1208.3131H}, \cite[Thm.~5.14]{MR2854188}}]\label{thm:RA_IF}
For $W$ a finite complex reflection group, the reflection arrangement $\A(W)$ is inductively free
if and only if $W$ does not admit an irreducible factor isomorphic to a monomial group $\crgM{r}{r}{\ell}$ for $r,\ell \geq 3$,
$\crg{24}$, $\crg{27}$, $\crg{29}$, $\crg{31}$, $\crg{33}$, or $\crg{34}$.  
\end{theorem}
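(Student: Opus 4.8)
The plan is to reduce to irreducible $W$, invoke the Shephard--Todd classification, establish the positive direction by explicit addition chains (and supersolvability), and establish the negative direction by a restriction-exponent obstruction that can be checked orbit by orbit.

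First I would reduce to the case that $W$ is irreducible. Since inductive freeness is compatible with the product construction, \cite[Prop.~2.10]{2012arXiv1208.3131H}, and $\A(W)$ is the product of the reflection arrangements of the irreducible factors of $W$, the arrangement $\A(W)$ lies in $\IFC$ if and only if the reflection arrangement of each irreducible factor does. By the Shephard--Todd classification, the irreducible groups are the imprimitive (monomial) groups $\crgM{r}{p}{\ell}$ together with the $34$ exceptional groups $\crg{4},\ldots,\crg{37}$, so it remains to treat these two families.

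For the positive direction I would show $\A(W) \in \IFC$ for every irreducible $W$ not on the excluded list. The real (Coxeter) arrangements are inductively free by Barakat--Cuntz, \cite{MR2854188}; in many cases, including the full monomial groups and the admissible imprimitive groups $\crgM{r}{p}{\ell}$ with $p<r$, the arrangement is even supersolvable and hence lies in $\IFC$ by Example \ref{ex:SS_IF}. For the remaining admissible imprimitive groups and the admissible exceptional groups, the strategy is to exhibit an explicit chain of additions: beginning from a supersolvable (or empty) subarrangement one adds the hyperplanes of $\A(W)$ one at a time, verifying at each step via Theorem \ref{thm:Addition_Deletion}, with the exponent bookkeeping of Theorem \ref{thm:A_AoH_exp}, that the resulting triple satisfies $\expAA{\A''} \subset \expAA{\A'}$. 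For the exceptional groups this is a finite search, and the $W$-action of Lemma \ref{lem:Action_W_RSA} reduces the number of hyperplanes one has to inspect to a set of orbit representatives.

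For the negative direction I must show $\A(W) \notin \IFC$ for each excluded $W$. The key point is that if $\A(W)$ were inductively free, then its final addition step would, by Theorems \ref{thm:Addition_Deletion} and \ref{thm:A_free_factZ}, force a hyperplane $H_0$ with $\A^{H_0}$ free and $\CharPolyA{\A^{H_0}} = \CharPolyA{\A}/(t-b)$ for some exponent $b$ of $\A$; equivalently $\expAA{\A^{H_0}} = \expAA{\A} \setminus \{b\}$ as multisets. Since the hyperplanes of a reflection arrangement fall into finitely many $W$-orbits, one computes the factored characteristic polynomial of each restriction $\A^{H_0}$ and checks whether this multiset inclusion can hold; whenever it fails for every orbit representative, $\A(W)$ cannot be inductively free. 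The hard part will be the infinite family $\crgM{r}{r}{\ell}$ with $r,\ell \ge 3$, which requires a uniform argument showing that the exponents of all its restrictions simultaneously obstruct the final step, together with the delicate exceptional cases---above all $\crg{31}$, whose restriction data come closest to matching and which may force one to track the recursion beyond the last addition. These are precisely the obstructions the present paper must revisit when passing from inductive to recursive freeness.
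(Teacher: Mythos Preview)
This theorem is not proved in the present paper; it is quoted from \cite{2012arXiv1208.3131H} and \cite{MR2854188} as the starting point for the classification of recursively free reflection arrangements, so there is no ``paper's own proof'' to compare against. Your outline is a reasonable high-level sketch of how those cited proofs go: the reduction to irreducible $W$ via \cite[Prop.~2.10]{2012arXiv1208.3131H} is correct, the Coxeter cases are handled by \cite{MR2854188}, and the positive direction for the remaining groups is indeed settled by supersolvability or explicit addition chains.

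The one place where your sketch and the present paper do make substantive contact is the negative direction for $\crg{31}$. Your proposed obstruction---checking whether some $\A^{H_0}$ has exponents $\expAA{\A}\setminus\{b\}$---fails at the very first step for $\A(\crg{31})$: every restriction $\A^H$ \emph{does} have exponents $\{1,13,17\}$, and one can delete hyperplanes freely for many steps. You acknowledge this by saying one may have to ``track the recursion beyond the last addition,'' but that is exactly the hard part, and in \cite{2012arXiv1208.3131H} it was handled by a substantial computer search. The present paper takes a genuinely different route: via the free filtration analysis of Lemmas \ref{lem:SubsetN_FFSA}--\ref{lem:FFSA_G31} and Proposition \ref{prop:G31_NIF}, it shows that every free filtration from $\A(\crg{31})$ terminates at a copy of $\A(\crg{29})$, giving a computer-free proof that $\A(\crg{31})\notin\IFC$. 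So for this case your proposal points toward the original (computational) argument, whereas the paper supplies a structural alternative based on the partition of Lemma \ref{lem:Part_AG31}.
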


Thus, to prove Theorem \ref{thm:RFcrArr}, we only have to check 
the non\--in\-duc\-tively free cases from Theorem \ref{thm:RA_IF} since inductive freeness implies recursive freeness.

\section{Proof of Theorem \ref{thm:RFcrArr}}

Thanks to Proposition \ref{prop:Prod_RF}, the proof of Theorem \ref{thm:RFcrArr}  reduces to the case
when $\A(W)$ respectively $W$ are irreducible. 
We consider the different irreducible reflection arrangements, provided by Theorem \ref{thm:RA_IF},
which are not inductively free, in turn.

\subsection{The reflection arrangements $\A(\crgM{r}{r}{\ell})$, $r,\ell \geq3$}\label{sec3}

For an integer $r \geq 2$ let $\theta = \exp{(2\pi i / r)}$, and $C(r)$ the cyclic group generated by $\theta$.
The reflection arrangement $\A(W)$ with $W = \crgM{r}{r}{\ell}$ contains the hyperplanes
\begin{equation*}
H_{i,j}(\zeta) := \Kern{ x_i - \zeta x_j}, 
\end{equation*} 
with $i,j \leq \ell$ and $i \neq j$, $\zeta \in C(r)$,
and if $W$ is the full monomial group $\crgM{r}{1}{\ell}$,
then $\A(\crgM{r}{1}{\ell}$
additionally contains the coordinate hyperplanes $E_i := \Kern{x_i}$, \cite[Ch.~6.4]{orlik1992arrangements}.

To show that the reflection arrangements $\A(\crgM{r}{r}{\ell})$ for $r,\ell \geq 3$ are
recursively free, we need the intermediate arrangements $\A_\ell^k(r)$ with 
$\A(\crgM{r}{r}{\ell}) \subseteq \A_\ell^k(r) \subseteq  \A(\crgM{r}{1}{\ell})$.
They are defined as follows:
\begin{equation*}
\A_\ell^k(r) := \A(\crgM{r}{r}{\ell}) \dot{\cup} \{ E_1,\ldots,E_k\},
\end{equation*}
and their defining polynomial is given by
\begin{equation*}
Q(\A_\ell^k(r)) = x_1\cdots x_k \prod_{\substack{1 \leq i < j \leq \ell \\ 0 \leq n < r}} (x_i - \zeta^n x_j).
\end{equation*}

The following result by Amend, Hoge and R\"ohrle immediately implies the recursive freeness of $\A(\crgM{r}{r}{\ell})$,
for $r,\ell \geq 3$.

\begin{theorem}[{\cite[Thm.~3.6]{MR3250448}}]\label{thm:Akl_rf}
Suppose $r \geq 2$, $\ell \geq 3$ and $0 \leq k \leq \ell$.
Then $\A_\ell^k(r)$ is recursively free.
\end{theorem}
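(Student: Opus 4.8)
The plan is to prove the statement by downward induction on $k$, from $k=\ell$ down to $k=0$, keeping $r\ge 2$ and $\ell\ge 3$ fixed, and using the deletion rule (part (3)) of Definition \ref{def:RF}. The anchor is the case $k=\ell$: here $\A_\ell^\ell(r)=\A(\crgM{r}{1}{\ell})$ is the reflection arrangement of the full monomial group $\crgM{r}{1}{\ell}$, which is \emph{not} among the groups excluded in Theorem \ref{thm:RA_IF}. Hence $\A_\ell^\ell(r)$ is inductively free, and therefore recursively free, with exponents $\expAA{\A_\ell^\ell(r)} = \{\{1, r+1, 2r+1, \ldots, (\ell-1)r+1\}\}$. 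Throughout the induction I would carry along the auxiliary claim $\expAA{\A_\ell^k(r)} = \{\{1, r+1, \ldots, (\ell-2)r+1, b_k\}\}$ where $b_k := (\ell-1)r+1-\ell+k$, which holds at $k=\ell$ since $b_\ell = (\ell-1)r+1$.

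For the inductive step I would pass from $\A_\ell^k(r)$ to $\A_\ell^{k-1}(r)$ by deleting the coordinate hyperplane $E_k=\Kern{x_k}$, forming the triple with $\A = \A_\ell^k(r)$, $\A' = \A_\ell^{k-1}(r)$, and $\A'' = (\A_\ell^k(r))^{E_k}$. The key computation is to identify the restriction $\A''$. Setting $x_k=0$ sends each $H_{i,k}(\zeta^n)$ and each $H_{k,j}(\zeta^n)$ to a coordinate hyperplane ($\Kern{x_i}$ resp.\ $\Kern{x_j}$), each $E_i$ with $i<k$ to $\Kern{x_i}$, and each $H_{i,j}(\zeta^n)$ with $i,j\ne k$ to $\Kern{x_i-\zeta^n x_j}$. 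Because $r\ge 2$ makes the value $\zeta^0=1$ available, the hyperplane $\Kern{x_m-x_k}$ exists for every $m\ne k$ and hence every coordinate hyperplane $\Kern{x_m}$ with $m\ne k$ is produced. Thus $\A''$ is exactly the full monomial arrangement $\A(\crgM{r}{1}{\ell-1})$ in the $\ell-1$ variables $\{x_m \mid m\ne k\}$, independently of $k$. By Theorem \ref{thm:RA_IF} it is again inductively, hence recursively, free, with $\expAA{\A''} = \{\{1, r+1, \ldots, (\ell-2)r+1\}\}$.

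It then remains to verify the exponent containment demanded by Definition \ref{def:RF}(3). A count using (\ref{Sum_exp}) gives $b_k = \Betrag{\A_\ell^k(r)} - \Betrag{\A''} = (\ell-1)r+1-\ell+k$, so the lower $\ell-1$ exponents of $\A_\ell^k(r)$ coincide with $\expAA{\A''}$ and the unique extra exponent is $b_k$; in particular $\expAA{\A''} = \expAA{\A_\ell^k(r)}\setminus\{\{b_k\}\}$, whence the required containment $\expAA{\A''}\subset\expAA{\A_\ell^k(r)}$ holds trivially. Applying Theorem \ref{thm:Addition_Deletion} to the two free members $\A_\ell^k(r)$ and $\A''$ then shows $\A_\ell^{k-1}(r)$ is free with $\expAA{\A_\ell^{k-1}(r)} = \{\{1, \ldots, (\ell-2)r+1, b_k-1\}\} = \{\{1,\ldots,(\ell-2)r+1, b_{k-1}\}\}$, propagating the exponent claim; simultaneously Definition \ref{def:RF}(3) yields $\A_\ell^{k-1}(r)\in\RFC$ from $\A_\ell^k(r)\in\RFC$, $\A''\in\RFC$, and the containment. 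Iterating the descent reaches $k=0$, i.e.\ $\A_\ell^0(r)=\A(\crgM{r}{r}{\ell})$, which completes the proof.

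The main obstacle is the restriction computation in the second step. One must check carefully that restricting to a single coordinate hyperplane collapses all the ``mixed'' hyperplanes $H_{i,k}(\zeta^n)$ and $H_{k,j}(\zeta^n)$ onto coordinate hyperplanes, and that together with the surviving $E_i$ ($i<k$) this reconstitutes the \emph{complete} monomial arrangement $\A(\crgM{r}{1}{\ell-1})$ --- in particular that no coordinate hyperplane of $E_k$ is missing, which is precisely the point where the hypothesis $r\ge 2$ enters via $\zeta^0=1$. Once this is secured, the remaining work is purely the bookkeeping of exponents, and the descent never leaves arrangements whose relevant restrictions are full monomial arrangements already handled by Theorem \ref{thm:RA_IF}.
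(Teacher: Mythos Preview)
The paper does not prove this statement itself: Theorem \ref{thm:Akl_rf} is quoted from \cite[Thm.~3.6]{MR3250448} and used as a black box to derive Corollary \ref{coro:Grrl_RF}. So there is no proof in the present paper to compare your argument against.

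Your argument is correct and is the natural one (and essentially the proof given in \cite{MR3250448}): downward induction on $k$ with anchor $\A_\ell^\ell(r)=\A(\crgM{r}{1}{\ell})$, which is inductively free by Theorem \ref{thm:RA_IF} (indeed supersolvable), then at each step deleting $E_k$ and identifying the restriction $(\A_\ell^k(r))^{E_k}$ with $\A(\crgM{r}{1}{\ell-1})$, whose exponents $\{\{1,r+1,\ldots,(\ell-2)r+1\}\}$ sit inside $\expAA{\A_\ell^k(r)}=\{\{1,r+1,\ldots,(\ell-2)r+1,b_k\}\}$ so that both Theorem \ref{thm:Addition_Deletion} and Definition \ref{def:RF}(3) apply. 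One minor quibble: your remark that $r\ge 2$ is what makes $\zeta^0=1$ available is not right, since $1\in C(r)$ for every $r\ge 1$; the restriction computation producing all coordinate hyperplanes of $E_k$ does not itself use $r\ge 2$. This does not affect the validity of the induction.
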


\begin{corollary}\label{coro:Grrl_RF}
Let $W$ be the finite complex reflection group $W = \crgM{r}{r}{\ell}$.
Then the reflection arrangement $\A := \A(W)$ is recursively free.
\end{corollary}
\begin{proof}
We have $\A \cong \A_\ell^0(r)$ and  by
Theorem \ref{thm:Akl_rf}, $\A_\ell^0(r)$ is recursively free.
\end{proof}

\subsection{The reflection arrangement $\A(\crg{24})$}\label{subsec:A24}

We show that the reflection arrangement of the finite complex reflection group $\crg{24}$ is
recursively free by constructing a so called supersolvable resolution for the arrangement, (see also \cite[Ch.~3.6]{Zielger87}), 
and making sure that in each addition-step 
of a new hyperplane the resulting arrangements and restrictions are free with suitable exponents.
As a supersolvable arrangement is always inductively free (Example \ref{ex:SS_IF}), 
it follows that $\A(\crg{24})$ is recursively free.

\begin{lemma}
Let $W$ be the complex reflection group $W = \crg{24}$.
Then the reflection arrangement $\A = \A(W)$ is recursively free.
\end{lemma}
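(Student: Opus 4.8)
The plan is to construct an explicit supersolvable resolution of $\A(\crg{24})$, building the arrangement up hyperplane-by-hyperplane from a supersolvable arrangement and verifying at each addition step that the Addition-Deletion criterion (Theorem~\ref{thm:Addition_Deletion}) applies, so that recursive freeness propagates all the way to $\A(W)$.

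First I would record the concrete data of $\A(\crg{24})$: it is an essential, irreducible $3$-arrangement with $21$ hyperplanes, and since reflection arrangements are free (Terao's theorem), its exponents are known to be $\expAA{\A} = \{\{1,9,11\}\}$ (these sum to $21$ as required by (\ref{Sum_exp})). Working in $V = \Komplex^3$ with coordinates $x_1,x_2,x_3$, I would fix explicit defining linear forms for the $21$ reflecting hyperplanes of the Klein group $\crg{24}$. The strategy of a supersolvable resolution is to exhibit a chain of arrangements
\begin{equation*}
\B_0 \subset \B_1 \subset \cdots \subset \B_m = \A(\crg{24}),
\end{equation*}
where $\B_0$ is supersolvable (hence inductively free by Example~\ref{ex:SS_IF}, hence recursively free), and each $\B_{i+1}$ is obtained from $\B_i$ by adding a single hyperplane $H_{i}$. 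For each step I would form the triple $(\B_{i+1}, \B_i, \B_{i+1}^{H_i})$ and check that $\B_i$ and the restriction $\B_{i+1}^{H_i}$ are free with compatible exponents so that Theorem~\ref{thm:Addition_Deletion} forces $\B_{i+1}$ to be free and, crucially, keeps it in $\RFC$ via clause~(2) of Definition~\ref{def:RF}.

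The key computation at each step is the restriction $\B_{i+1}^{H_i}$, which is a $2$-arrangement and therefore automatically free; its exponents are $\{\{1, |\B_{i+1}^{H_i}|-1\}\}$, determined by counting the distinct points $P_{H_i} = \{X \in \LA{\B_i}_2 \mid X \subseteq H_i\}$ together with the multiplicities $|\B_{i}'{}_X|$ of the localizations at those points (precisely the quantity governed by Lemma~\ref{lem:A_u_H}). So the real work is bookkeeping on the intersection lattice: for each candidate hyperplane $H_i$ I must know how the rank-$2$ flats of $\B_i$ lie on $H_i$. To organize this and to certify the supersolvability of the base $\B_0$, I would use the criterion stated in the preliminaries, namely that an essential $3$-arrangement is supersolvable as soon as there is a modular point $X \in \LA{\B_0}_2$ connected to every other rank-$2$ flat by a hyperplane of $\B_0$; concretely one takes $\B_0$ to be a near-pencil or a suitable supersolvable subarrangement containing such a modular point.

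The main obstacle will be choosing the order of hyperplane additions so that the exponents line up at \emph{every} intermediate step: one must verify $\expAA{\B_{i+1}^{H_i}} \subset \expAA{\B_i}$ throughout, and a bad ordering will produce a step where the restriction's exponents fail to be a subset, breaking the induction even though the final arrangement is free. I expect the verification to reduce to a finite, if tedious, lattice computation, which is exactly where the GAP computations mentioned in the introduction enter; the conceptual content is simply that once a valid supersolvable resolution is found, recursive freeness follows mechanically by iterating clause~(2) of Definition~\ref{def:RF} from the inductively (hence recursively) free base $\B_0$ up to $\A(\crg{24})$.
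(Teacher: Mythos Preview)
Your chain runs in the wrong direction, and this is fatal. You propose
\[
\B_0 \subset \B_1 \subset \cdots \subset \B_m = \A(\crg{24})
\]
with $\B_0$ supersolvable and each step an addition governed by clause~(2) of Definition~\ref{def:RF}. But clause~(2) of $\RFC$ is identical to clause~(2) of $\IFC$; a chain that uses only additions starting from an inductively free base proves that the top arrangement is \emph{inductively} free. By Theorem~\ref{thm:RA_IF}, $\A(\crg{24})$ is \emph{not} inductively free, so no such chain can exist. The obstruction you anticipate (``a bad ordering will produce a step where the restriction's exponents fail to be a subset'') is not merely a matter of ordering---every ordering must fail.

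The paper's supersolvable resolution goes the other way: it \emph{enlarges} $\A(\crg{24})$ by adjoining twelve auxiliary hyperplanes $H_1,\ldots,H_{12}$ (none of them in $\A$) to reach a supersolvable arrangement $\A_{12} \supsetneq \A$, checking that each intermediate $\A_j$ is free with $\expAA{\A_j^{H_j}} \subseteq \expAA{\A_j}$. Since $\A_{12}$ is supersolvable it is inductively (hence recursively) free, and one then descends back to $\A$ via clause~(3) of Definition~\ref{def:RF}---the deletion clause, which is precisely what distinguishes $\RFC$ from $\IFC$. Your write-up never invokes clause~(3), which is the only way recursive freeness can hold while inductive freeness fails. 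The fix is to reverse the chain: set $\B_0 = \A(\crg{24})$, add hyperplanes to reach a supersolvable $\B_m$, and then argue downward.
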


\begin{proof}
Let $\omega := -\frac{1}{2}(1+i\sqrt{7})$, then the reflecting hyperplanes of $\A$ can be defined by the following
linear forms (see also \cite[Ch.\ 7, 6.2]{lehrer2009unitary}):
\begin{align*}
\A = \quad \{ \, &(1,0,0)^\perp,(0,1,0)^\perp,(0,0,1)^\perp,(1,1,0)^\perp,(-1,1,0)^\perp, \\
	& (1,0,1)^\perp,(-1,0,1)^\perp,(0,1,1)^\perp,(0,-1,1)^\perp,(\omega,\omega,2)^\perp, \\
	& (-\omega,\omega,2)^\perp,(\omega,-\omega,2)^\perp,(-\omega,-\omega,2)^\perp,(\omega,2,\omega)^\perp, \\
	& (-\omega,2,\omega)^\perp,(\omega,2,-\omega)^\perp,(-\omega,2,-\omega)^\perp,(2,\omega,\omega)^\perp, \\
	& (2,-\omega,\omega)^\perp,(2,\omega,-\omega)^\perp,(2,-\omega,-\omega)^\perp \, \}.
\end{align*}
The exponents of $\A$ are $\expA{\A}{1,9,11}$.

If we define
\begin{align*}
\{ H_1,\ldots,H_{12} \} := \quad \{ \, &(\omega^2,\omega,0)^\perp,(-\omega^2,\omega,0)^\perp,(\omega,\omega^2,0)^\perp \\
&(-\omega,\omega^2,0)^\perp,(2-\omega,\omega,0)^\perp, (-2+\omega,\omega,0)^\perp, \\
&(\omega,2-\omega,0)^\perp,(-\omega,2-\omega,0)^\perp,(\omega,2,0)^\perp, \\
&(-\omega,2,0)^\perp, (2,\omega,0)^\perp,(-2,\omega,0)^\perp \, \},
\end{align*}
and the arrangements $\A_j := \A \dot{\cup} \{ H_1,\ldots,H_j \}$ for $1 \leq j \leq 12$,
then
\begin{equation*}
X = (1,0,0)^\perp \cap (0,1,0)^\perp \cap (1,1,0)^\perp \cap (-1,1,0)^\perp \cap_{j=1}^{12} H_j \in \LA{\A_{12}}
\end{equation*}
is a rank $2$ modular element, and $\A_{12}$ is supersolvable. 
In each step, $\A_j$ is free, $\A_j^{H_j}$ is inductively free (since $\A_j^{H_j}$ is a $2$-arrangement),
and $\expAA{\A_j^{H_j}} \subseteq \expAA{\A_j}$.
The exponents of the arrangements $\A_j$ and $\A_j^{H_j}$ are listed in Table \ref{tab_exp_Ai}.
\begin{table}
\begin{tabular}{l l l}
\hline
$j$ & $\expAA{\A_j}$ & $\expAA{\A_j^{H_j}}$ \\
\hline
\hline
1 	& 1,10,11 & 1,11 \\
2 	& 1,11,11 & 1,11 \\
3 	& 1,11,12 & 1,11 \\
4 	& 1,11,13 & 1,11 \\
5 	& 1,12,13 & 1,13 \\
6 	& 1,13,13 & 1,13 \\
7 	& 1,13,14 & 1,13 \\
8 	& 1,13,15 & 1,13 \\
9 	& 1,14,15 & 1,15 \\
10 	& 1,15,15 & 1,15 \\
11 	& 1,15,16 & 1,15 \\
12 	& 1,15,17 & 1,15 \\
\hline
\\
\end{tabular}
\caption{The exponents of the free arrangements $\A_j$ and $\A_j^{H_j}$.}\label{tab_exp_Ai}
\end{table}

Since by Example \ref{ex:SS_IF} a supersolvable arrangement is inductively free, $\A$ is recursively free. 
\end{proof}

We found the set of hyperplanes $\{ H_1,\ldots,H_{12} \}$ 
by ``connecting'' a suitable $X \in \LA{\A}_2$ to other $Y \in \LA{\A}_2$
via addition of new hyperplanes such that $X$ becomes a modular element in the resulting intersection lattice,
subject to each addition of a new hyperplane results in a free arrangement,
(compare with \cite[Ex.~4.59]{orlik1992arrangements}).

\subsection{The reflection arrangement $\A(\crg{27})$}\label{subsec:A27}

In \cite[Remark~3.7]{MR3272729} Cuntz and Hoge have shown that the reflection arrangement $\A(\crg{27})$ is not
recursively free. In particular they have shown that there is no hyperplane which can be added or removed from $\A(\crg{27})$
resulting in a free arrangement.

\subsection{The reflection arrangements $\A(\crg{29})$ and $\A(\crg{31})$}\label{subsec:A29_A31}

In \cite[Lemma~3.5]{2012arXiv1208.3131H} Hoge and R\"ohrle settled the case that the reflection arrangement $\A(\crg{31})$ 
of the exceptional finite complex reflection group $\crg{31}$ is not inductively free by testing several cases with the computer.

In this part we will see, that the reflection arrangement $\A(\crg{31})$ is additionally not recursively free and
as a consequence the closely related reflection subarrangement $\A(\crg{29})$ is also not recursively free.
Furthermore we obtain a new computer-free proof, that $\A(\crg{31})$ is not inductively free.

\begin{theorem}\label{thm:G31_G29_nRF}
Let $\A = \A(W)$ be the reflection arrangement with $W$ isomorphic to one of the finite complex reflection groups 
$\crg{29}$, $\crg{31}$. Then $\A$ is not recursively free.
\end{theorem}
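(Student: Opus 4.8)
The plan is to show non-recursive freeness of $\A(\crg{31})$ directly, and then deduce the result for $\A(\crg{29})$ from it using the structural relationship between the two groups. The key observation is that $W = \crg{31}$ acts on $V = \Komplex^4$ and $\A(\crg{31})$ has $60$ hyperplanes with exponents $\{\{1,13,17,19\}\}$ (by Terao's factorization, Theorem \ref{thm:A_free_factZ}, these can be read off from the characteristic polynomial). Recursive freeness is built from the three operations in Definition \ref{def:RF}, so to prove $\A \notin \RFC$ I would argue that the recursion cannot even get started: one must exhibit a hyperplane $H_0$ that can be either \emph{added} to or \emph{deleted} from $\A$ so that the enlarged/reduced arrangement is again free with the correct exponent compatibility. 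The strategy is therefore to rule out \emph{every} such single-step modification.

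\textbf{Deletion step.} First I would address deletions. For each $H \in \A$, set $\A' = \A \setminus \{H\}$. The aim is to show $\A'$ is \emph{never} free. Here Lemma \ref{lem:A_u_H} is the crucial tool: if $\A'$ were free, then the sum $\sum_{X \in P_H}(\Betrag{\A'_X}-1)$ over the rank-$2$ flats of $\A'$ lying in $H$ must be an exponent of $\A'$ (hence, via Addition-Deletion, Theorem \ref{thm:Addition_Deletion}, an admissible number relative to $\expAA{\A}$). Because $W$ acts transitively (or with few orbits) on $\A$, it suffices by Lemma \ref{lem:Action_W_RSA} to check this condition on one representative per $W$-orbit of hyperplanes. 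I would compute, for a representative $H$, the local combinatorics of the rank-$2$ flats contained in $H$ and show the resulting forced exponent value is incompatible with $\{\{1,13,17,19\}\}$ and the constraint \eqref{Sum_exp}, thereby contradicting freeness of $\A'$.

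\textbf{Addition step.} Next I would rule out additions. For a candidate hyperplane $H_0 \notin \A$, put $\A^+ = \A \dot{\cup}\{H_0\}$; freeness of $\A^+$ together with Addition-Deletion would force $\expAA{\A^+} = \{\{1,13,17,19+1\}\}$ or a similar shift, and would require $\A^{H_0}$ (a rank-$3$ restriction) to be free with matching exponents and correct characteristic polynomial. The plan is to enumerate the possible new flats that $H_0$ can create — again reducing the search modulo the $W$-action on the relevant localizations and using Proposition \ref{prop:Arf_AXrf} to propagate recursive freeness to localizations $\A_X$ — and to verify that in each case the numerical constraints from Theorems \ref{thm:A_AoH_exp} and \ref{thm:A_free_factZ} fail. \textbf{I expect this addition analysis to be the main obstacle}, since the set of candidate hyperplanes outside $\A$ is a priori large; the resolution is to show that any $H_0$ whose addition preserves freeness would have to pass through very constrained intersection patterns with the $60$ existing hyperplanes, and that the arithmetic of the exponents forbids all of them.

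\textbf{Passing to $\crg{29}$.} Finally, for $W \cong \crg{29}$ I would exploit that $\A(\crg{29})$ sits inside $\A(\crg{31})$ as a reflection subarrangement corresponding to a reflection subgroup $W' = \crg{29} \leq \crg{31}$, so that $\A(\crg{31})$ is obtained from $\A(\crg{29})$ by adding a single $W'$-orbit of hyperplanes. If $\A(\crg{29})$ were recursively free, then the deletion operation (item (3) of Definition \ref{def:RF}) applied along this extra orbit would provide a recursive-freeness certificate connecting the two arrangements; combined with the already-established non-recursive freeness of $\A(\crg{31})$, this yields a contradiction. Concretely, I would show that adding back the missing hyperplane orbit to $\A(\crg{29})$ realizes exactly one of the forbidden addition steps analyzed above, so that recursive freeness of $\A(\crg{29})$ would imply recursive freeness of $\A(\crg{31})$, which is false. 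The emphasis throughout is that all case-checking is finite and reduces dramatically once the $W$-action (via Lemma \ref{lem:Action_W_RSA}) is used to cut the hyperplane set down to orbit representatives.
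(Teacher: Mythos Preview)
Your deletion step contains a fatal error. You assert that for each $H \in \A(\crg{31})$ the deletion $\A' = \A \setminus \{H\}$ is never free, but this is simply false: every single-hyperplane deletion from $\A(\crg{31})$ \emph{is} free. Indeed, $\expAA{\A(\crg{31})} = \{\{1,13,17,29\}\}$ (not $\{\{1,13,17,19\}\}$; note $1+13+17+29 = 60 = |\A|$), all restrictions $\A^H$ are isomorphic with exponents $\{\{1,13,17\}\}$, and Addition--Deletion gives $\expAA{\A'} = \{\{1,13,17,28\}\}$. In fact there is an entire chain of free deletions reaching all the way down to $\A(\crg{29})$ with $40$ hyperplanes. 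So $\A(\crg{31})$ is not isolated on the deletion side at all, and your Lemma~\ref{lem:A_u_H} computation cannot produce the contradiction you expect.

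This is precisely why the paper's argument is substantially more involved than a single add/delete check. One must characterise the \emph{entire cluster} of free arrangements reachable from $\A(\crg{31})$ by iterated free deletions and additions (the free filtration subarrangements), prove that this cluster is bounded below by $\A(\crg{29})$ (no further free deletion is possible from there), and then show that from \emph{every} arrangement in the cluster, any hyperplane whose addition preserves freeness already lies in $\A(\crg{31})$ --- so one can never escape the cluster. Only then does non-recursive freeness follow simultaneously for $\A(\crg{31})$, $\A(\crg{29})$, and everything in between. Your reduction of $\crg{29}$ to $\crg{31}$ is also backwards in spirit: the implication ``$\A(\crg{29}) \in \RFC \Rightarrow \A(\crg{31}) \in \RFC$'' would still require checking that the twenty restrictions along the way are themselves recursively free, which you have not addressed; the paper instead treats both arrangements at once via the cluster argument.
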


We will prove the theorem in two parts.

In the first part, we will characterize certain free subarrangements of $\A(\crg{31})$ which we can obtain out of $\A(\crg{31})$ by
successive deletion of hyperplanes such that all the arrangements in between are also free.
We call such arrangements \emph{\FFSAsp}.
Then we will investigate the relation between the two reflection arrangements $\A(\crg{29})$ and $\A(\crg{31})$,
and obtain that $\A(\crg{29})$ is the smallest of these \FFSAs of $\A(\crg{31})$.
This yields a new proof, that $\A(\crg{31})$ is not inductively free (since inductive freeness implies that the empty arrangement
is a \FFSAp).

In the second part, we will show that if $\tilde{\A}$ is a \FFSA of $\A(\crg{31})$,
there is no possible way to obtain a free arrangement out of $\tilde{\A}$
by adding a new hyperplane which is not already contained in $\A(\crg{31})$.

This will conclude the proof of Theorem \ref{thm:G31_G29_nRF}.

\begin{definition}\label{def:ArrG31}
Let $i = \sqrt{-1}$. The arrangement $\A(\crg{31})$ can be defined as the union of the following collections of hyperplanes:
\begin{align}
\begin{split}
\A(\crg{31})  := \, %
	& \{ \Kern{x_p- i^k x_q} \mid 0 \leq k \leq 3, 1 \leq p < q \leq 4 \}  \, \dot{\cup} \\
	& \{ \alpha^\perp \mid \alpha \in \crgM{4}{4}{4}.(1,1,1,1) \}  \, \dot{\cup} \\
	& \{ (1,0,0,0)^\perp, (0,1,0,0)^\perp, (0,0,1,0)^\perp, (0,0,0,1)^\perp \}   \,  \dot{\cup}  \\
	& \{ \alpha^\perp \mid \alpha \in \crgM{4}{4}{4}.(-1,1,1,1) \}. 
\end{split}
\end{align}

The first set contains the hyperplanes of the reflection arrangement $\Arr{\crgM{4}{4}{4}}$.
The second and the last set contain the hyperplanes defined by the linear forms in orbits of the group $\crgM{4}{4}{4}$.
The union of the first and the second set gives the $40$ hyperplanes of the reflection arrangement $\A(\crg{29})$. 
In particular, $\A(\crg{29}) \subseteq \A(\crg{31})$, compare with \cite[Ch.\ 7, 6.2]{lehrer2009unitary}.
\end{definition}

\subsubsection{The free filtration subarrangements of $\A(\crg{31})$} 

In this subsection we characterize certain free subarrangements of $\A(\crg{31})$ which we can obtain by successively removing
hyperplanes from $\A(\crg{31})$, the so called \emph{free filtration subarrangements}.
We will use this characterization in Subsection \ref{subsubsec:A29_A31_nRF_e} 
to prove Theorem \ref{thm:G31_G29_nRF}.
Furthermore, along the way, we obtain another (computer-free) proof that the arrangement 
$\A(\crg{31})$ cannot be inductively free 
(recall Definition \ref{def:IF}) without checking all the cases for a possible inductive 
chain but rather by examining the intersection lattices of certain subarrangements and using the fact, that $\A(\crg{29})$ 
plays a ``special'' role among the free filtration subarrangements of $\A(\crg{31})$.

\begin{definition}\label{def:FFSA}
Let $\A$ be a free $\ell$-arrangement and $\tilde{\A} \subseteq \A$ a free sub\-arrangement.
A strictly decreasing sequence of free arrangements
\begin{equation*}
\A = \A_0 \supsetneq \A_1 \supsetneq \ldots \supsetneq \A_{n-1} \supsetneq \A_n = \tilde{\A}
\end{equation*}
is called a \emph{(finite) free filtration} from $\A$ to $\tilde{\A}$ if $\Betrag{\A_i} = \Betrag{\A}-i$ for each $i$.
If there exists a (finite) free filtration from $\A$ to $\tilde{\A}$, 
we call $\tilde{\A}$ a \emph{\FFSAp}.
\end{definition}

The notion of free filtration was first introduced by Abe and Terao in \cite{Abe2015}. 

Note that, since all the subarrangements $\A_i$ in the definition are free, 
with Theorem \ref{thm:A_AoH_exp} the restrictions $\A_{i-1}^{H_i}$ are free and
we automatically have $\expAA{\A_{i-1}^{H_i}} \subseteq \expAA{\A_{i-1}}$
and $\expAA{\A_{i-1}^{H_i}} \subseteq \expAA{\A_{i}}$.

If $\A$ is an inductively free $\ell$-arrangement, then $\emptA{\ell}$ is a free filtration subarrangement.

The main result of this subsection is the following proposition which we will prove in several steps divided into
some lemmas.

\begin{proposition}\label{prop:G31_NIF}
Let $\A := \A(\crg{31})$ be the reflection arrangement of the finite complex reflection group $\crg{31}$.
Let $\tilde{\A}$ be a smallest (w.r.t. the number of hyperplanes)
free filtration subarrangement.
Then $\tilde{\A} \cong \A(\crg{29})$.
In particular $\A$, $\A(\crg{29})$ and all other free filtration subarrangements $\tilde{\A} \subseteq \A$ 
are not inductively free.
\end{proposition}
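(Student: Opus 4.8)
The plan is to read off the free filtrations of $\A := \A(\crg{31})$ from the combinatorics of its intersection lattice, using Lemma \ref{lem:A_u_H} as the central obstruction. First I would record the exponents $\expAA{\A} = \{\{1,13,17,29\}\}$ and $\expAA{\A(\crg{29})} = \{\{1,9,13,17\}\}$ and note that both are essential irreducible $4$-arrangements with $\Betrag{\A} = 60$ and $\Betrag{\A(\crg{29})} = 40$, so a free filtration from $\A$ to $\A(\crg{29})$ consists of exactly $20$ deletions. By Theorem \ref{thm:A_AoH_exp} each deletion lowers a single exponent by $1$, hence the exponents must travel the forced path from $(1,13,17,29)$ to $(1,9,13,17)$, dropping the top exponent $12$ times and each of the two middle exponents $4$ times. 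The $20$ hyperplanes of $\A \setminus \A(\crg{29})$ are the four coordinate hyperplanes $E_i$ together with the sixteen hyperplanes $\alpha^\perp$ with $\alpha \in \crgM{4}{4}{4}.(-1,1,1,1)$. Throughout I would use the action of $W$ on subarrangements, together with Lemma \ref{lem:Action_W_RSA}, to replace each freeness test by a test on a single orbit representative.

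Next I would exhibit one explicit free filtration from $\A$ down to $\A(\crg{29})$, showing that $\A(\crg{29})$ is a \FFSAp{} and hence that the smallest one has at most $40$ hyperplanes. Deleting the sixteen hyperplanes in the orbit of $(-1,1,1,1)$ and the four coordinate hyperplanes in a suitable order, I would verify at each step, via the Addition-Deletion Theorem \ref{thm:Addition_Deletion} and the forced exponent path, that the intermediate arrangement stays free; the exponent that drops is supplied by Lemma \ref{lem:A_u_H} as the sum $\sum_{X \in P_H}(\Betrag{\A'_X}-1)$ over the rank-$2$ flats of the current arrangement lying on the deleted hyperplane $H$. Since these localizations and their multiplicities depend only on the $\crgM{4}{4}{4}$-orbit of $H$, this is a finite verification.

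The main obstacle is the converse: that no \FFSAp{} has fewer than $40$ hyperplanes and that every smallest one is isomorphic to $\A(\crg{29})$. I would first prove that $\A(\crg{29})$ is deletion-rigid, i.e.\ $\A(\crg{29}) \setminus \{H\}$ is non-free for every $H$. Writing $\B := \A(\crg{29})$ and $\B' := \B \setminus \{H\}$, were $\B'$ free then Lemma \ref{lem:A_u_H} and Theorem \ref{thm:A_AoH_exp} would force $\sum_{X \in P_H}(\Betrag{\B'_X}-1)$ to equal $e-1$ for some exponent $e \in \expAA{\B}$, and to be nonzero since $\B$ is irreducible; a direct count of the rank-$2$ flats through $H$, over the finitely many orbit representatives, shows this is impossible. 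The genuine difficulty is to control \emph{every} free filtration simultaneously, since a priori one could remove a hyperplane of $\A(\crg{29})$ while extra hyperplanes are still present, or descend by some altogether different route. To exclude this I would analyze, up to the $W$-action, the free subarrangements $\B$ strictly between $\A(\crg{29})$ and $\A$ together with their admissible deletions, showing via the exponent condition of Lemma \ref{lem:A_u_H} that removing any $H \in \A(\crg{29})$ from such a $\B$ fails freeness, so that a free deletion must always discard an extra hyperplane first. The $W$-action (Lemma \ref{lem:Action_W_RSA}) reduces the otherwise enormous supply of pairs $(\B,H)$ to finitely many cases, keeping the flat computation feasible. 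Combined with deletion-rigidity, this forces every maximal free filtration to strip precisely the twenty extra hyperplanes and terminate at $\A(\crg{29})$, giving both the lower bound $40$ and the isomorphism type of the smallest \FFSAp{}.

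Finally, the last assertion is immediate: were some \FFSAp{} $\tilde{\A}$ inductively free, then $\emptA{4}$ would be a \FFSAp{} of $\tilde{\A}$, and concatenating with the filtration from $\A$ to $\tilde{\A}$ would exhibit $\emptA{4}$ as a \FFSAp{} of $\A(\crg{31})$, contradicting the lower bound $40$ established above. Hence $\A(\crg{31})$, $\A(\crg{29})$, and all of their free filtration subarrangements fail to be inductively free.
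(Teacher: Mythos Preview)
Your outline has two load-bearing claims that fail, and the paper's argument is organized quite differently precisely to avoid them.

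First, the assertion that for a free $\B$ strictly between $\A(\crg{29})$ and $\A$ ``removing any $H \in \A(\crg{29})$ from such a $\B$ fails freeness'' is false. Already for $\B = \A \setminus \{H_0\}$ with $H_0$ outside your fixed copy of $\A(\crg{29})$, many deletions of hyperplanes $H$ inside that copy stay free: by Lemma~\ref{lem:SubsetN_FFSA} one only needs the pair $\{H_0,H\}$ to satisfy condition~(\ref{eq:SubsetN_FFSA}), and Lemma~\ref{lem:Part_A_X} shows this holds whenever $\Betrag{\A_{H_0 \cap H}} \geq 3$. The reason this does not contradict the proposition is that there are \emph{six} $W$-conjugate copies of $\A(\crg{29})$ inside $\A$ (Lemma~\ref{lem:Part_AG31}), and after such a deletion the result still contains one of the other five. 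So the dichotomy ``extra hyperplane versus hyperplane of $\A(\crg{29})$'' relative to a single fixed copy cannot organize the argument, and the $W$-action, which permutes the six copies, does not let you pin one down.

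Second, and more seriously, there exist free filtration subarrangements containing \emph{no} copy of $\A(\crg{29})$ at all, so your plan of analyzing only subarrangements between $\A(\crg{29})$ and $\A$ never sees them. The paper's Lemma~\ref{lem:FFSA_G31} shows these are exactly the $\tilde{\A} = \A \setminus \N$ with $\Betrag{\N} \leq 13$ satisfying~(\ref{eq:SubsetN_FFSA}), and the hardest part of the whole section is proving that none of them admits a further free deletion (so they all have at least $47$ hyperplanes). That step does not follow from Lemma~\ref{lem:A_u_H} and orbit-counting alone; it rests on the partition $\F$ of the sixty hyperplanes into fifteen blocks of four (Lemma~\ref{lem:Part_AG31}) together with the explicit distribution of the rank-$2$ localizations across those blocks (Lemma~\ref{lem:Part_A_X}), which force any such $\N$ with $\Betrag{\N}=13$ into the shape~(\ref{N_13}) and then give $\Betrag{\tilde{\A}^H} \leq 29$ for every remaining $H$. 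With that characterization in hand, the proposition itself is two lines: if $\A(\crg{29}) \nsubseteq \tilde{\A}$ then $\Betrag{\tilde{\A}} \geq 47$, while if $\A(\crg{29}) \subseteq \tilde{\A}$ then minimality and the deletion-rigidity of $\A(\crg{29})$ (which the paper cites from \cite{2012arXiv1208.3131H} rather than reproving) force $\tilde{\A} \cong \A(\crg{29})$. Your final paragraph on the ``in particular'' clause is correct.
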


To prove Proposition \ref{prop:G31_NIF}, we will characterize all free filtration subarrangements of $\A(\crg{31})$
by certain combinatorial properties of their intersection lattices.

The next lemma gives a sufficient condition for $\tilde{\A} \subseteq \A(\crg{31})$ being a free filtration subarrangement.
With an additional assumption on $\Betrag{\tilde{\A}}$, this condition is also necessary. 

\begin{lemma}\label{lem:SubsetN_FFSA}
Let $\An{N} \subseteq \A :=\A(\crg{31})$ be a subcollection of hyperplanes
and $\tilde{\A} := \A \setminus \An{N}$. If $\An{N}$ satisfies 
\begin{equation}\label{eq:SubsetN_FFSA}
	\tag{$\ast$} \bigcup_{X \in \LAq{\An{N}}{2}} X \subseteq \bigcup_{H \in {\tilde{\A}}} H  \text{,}
\end{equation} 
then $\tilde{\A} \subseteq \A$ is a free filtration subarrangement, 
with exponents $\expA{\tilde{\A}}{1,13,17,29-\Betrag{\An{N}}}$.

If furthermore $\Betrag{\An{N}} \leq 13$, then $\tilde{\A} \subseteq \A$ is a free filtration subarrangement if and only if
$\An{N}$ satisfies (\ref{eq:SubsetN_FFSA}).
\end{lemma}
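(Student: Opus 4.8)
The plan is to build the required free filtration by deleting the hyperplanes of $\An{N}$ one at a time, using the Addition--Deletion Theorem (Theorem~\ref{thm:Addition_Deletion}) at each step, and to reduce everything to a single input: that every restriction $\A(\crg{31})^{H}$, $H \in \A$, is free with exponents $\{\{1,13,17\}\}$. This one reads off the Orlik--Terao restriction data, or checks directly on a representative of each $W$-orbit of hyperplanes via \cite{GAP4}; note that since $\Betrag{\A}=60=1+13+17+29$, such a restriction has exactly $31$ hyperplanes.

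For the sufficiency direction I would fix any ordering $\An{N} = \{H_1,\dots,H_m\}$, set $\A_i := \A \setminus \{H_1,\dots,H_i\}$, and prove by induction on $i$ that $\A_i$ is free with $\expAA{\A_i} = \{\{1,13,17,29-i\}\}$. Granting that $\A_{i-1}$ is free with the stated exponents, Theorem~\ref{thm:Addition_Deletion} reduces the step to showing that $\A_{i-1}^{H_i}$ is free with exponents $\{\{1,13,17\}\} \subseteq \expAA{\A_{i-1}}$. Here condition (\ref{eq:SubsetN_FFSA}) enters through the observation that it forces $\A_{i-1}^{H_i} = \A^{H_i}$: a hyperplane of $\A^{H_i}$ corresponds to a rank-$2$ flat $X = H_i \cap H'$ of $\A$, and it can disappear from $\A_{i-1}^{H_i}$ only if every hyperplane of $\A_X$ other than $H_i$ lies in $\{H_1,\dots,H_{i-1}\} \subseteq \An{N}$; but then $\A_X \subseteq \An{N}$ with $\Betrag{\A_X} \ge 2$, so $X \in \LAq{\An{N}}{2}$ and $X \not\subseteq \bigcup_{H \in \tilde{\A}} H$, contradicting (\ref{eq:SubsetN_FFSA}). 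Thus each restriction stays equal to the full $\A^{H_i}$, which is free with exponents $\{\{1,13,17\}\}$, and the induction closes, producing the free filtration and the exponent formula.

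For the converse I would assume $\Betrag{\An{N}} = m \le 13$ and that $\tilde{\A}$ is a free filtration subarrangement, witnessed by $\A = \A_0 \supsetneq \dots \supsetneq \A_m = \tilde{\A}$. Since each $\A_{j-1}^{H_j}$ has its hyperplanes among those of $\A^{H_j}$ we get $\Betrag{\A_{j-1}^{H_j}} \le 31$, so the exponent $b_j \in \expAA{\A_{j-1}}$ dropped at step $j$ (Theorem~\ref{thm:A_AoH_exp} together with (\ref{Sum_exp})) satisfies $b_j = \Betrag{\A_{j-1}} - \Betrag{\A_{j-1}^{H_j}} \ge (61-j) - 31 = 30-j$. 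A short induction pins down the exponents: for $j \le 12$ the bound gives $b_j \ge 18$, and the only value in $\expAA{\A_{j-1}} = \{\{1,13,17,30-j\}\}$ that can be $\ge 18$ is $30-j$, so the top exponent drops and $b_j = 30-j$; at $j=13$ the exponents are $\{\{1,13,17,17\}\}$ and $b_{13}\ge 17$ forces $b_{13}=17$. In every case $\Betrag{\A_{j-1}^{H_j}} = 31$, i.e.\ each restriction along the filtration is full. If (\ref{eq:SubsetN_FFSA}) failed, there would be $X \in \LAq{\An{N}}{2}$ with $\A_X \subseteq \An{N}$ and $\Betrag{\A_X}\ge 2$; letting $H_{j^\ast}$ be the last hyperplane of $\A_X$ deleted, so $j^\ast \le m \le 13$, all other hyperplanes of $\A_X$ are already gone before step $j^\ast$, hence no $H' \in \A_{j^\ast-1}\setminus\{H_{j^\ast}\}$ satisfies $H_{j^\ast} \cap H' = X$ and the restriction hyperplane $X$ of $\A^{H_{j^\ast}}$ is missing from $\A_{j^\ast-1}^{H_{j^\ast}}$, giving $\Betrag{\A_{j^\ast-1}^{H_{j^\ast}}} < 31$ and contradicting fullness. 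This yields (\ref{eq:SubsetN_FFSA}).

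I expect the main obstacle to be the base input that $\A(\crg{31})^H$ is free with exponents $\{\{1,13,17\}\}$ for every $H$: once it is in hand, both directions are driven purely by Addition--Deletion and the elementary size bound $\Betrag{\A_{j-1}^{H_j}} \le 31$. The only additional care needed is the hyperplane-orbit bookkeeping for that base fact, and verifying that the multiset inclusion $\{\{1,13,17\}\} \subseteq \expAA{\A_{i-1}}$ required by Theorem~\ref{thm:Addition_Deletion} persists even when $29-i$ happens to coincide with $13$ or $17$.
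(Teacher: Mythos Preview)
Your proposal is correct and follows essentially the same route as the paper's proof: both directions hinge on the single input that $\A(\crg{31})^H$ is free with exponents $\{\{1,13,17\}\}$ for every $H$ (the paper invokes transitivity of $\crg{31}$ on $\A$ and the Orlik--Terao tables), and then the Addition--Deletion Theorem drives an induction on the number of deleted hyperplanes. Your converse is in fact more detailed than the paper's: where the paper simply asserts $\Betrag{\A_{i-1}^{H_i}} = 31$ along any free filtration with $\Betrag{\An{N}}\le 13$, you supply the exponent-tracking argument (via Theorem~\ref{thm:A_AoH_exp} and the bound $\Betrag{\A_{j-1}^{H_j}}\le 31$) that actually forces this step by step.
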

\begin{proof}
We proceed by induction on $\Betrag{\An{N}}$.

We use the fact, that $\crg{31}$ acts transitively on the hyperplanes of $\A$.
In particular, all the 3-arrangements $\A^H$ for $H \in \A$ are isomorphic and furthermore they are 
free with exponents $\expA{\A^{H}}{1,13,17}$ (see \cite[App.\ C and App.\ D]{orlik1992arrangements}).

First let $\An{N} = \{ H\}$ consist of only a single hyperplane.
Since $\A$ is free with exponents $\expA{\A}{1,13,17,29}$,
the arrangement $\tilde{\A}= \A'$ is just a deletion with respect to $H$, hence free
by Theorem \ref{thm:Addition_Deletion}, and $\tilde{\A}$ is a \FFSA
with $\expA{\tilde{\A}}{1,13,17,28}$. 

With $\An{N}$, each subcollection $\An{N'} = \An{N} \setminus \{K\}$, for a
$K \in \An{N}$, fulfills the assumption of the lemma. 
By the induction hypotheses $\B = \A \setminus \An{N}'$ is a \FFSA with 
$\expA{\B}{1,13,17,29-\Betrag{\An{N}'}} = \{\{1,13,17,29-\Betrag{\An{N}}+1\}\}$.
Now condition (\ref{eq:SubsetN_FFSA}) 
just means that
$\Betrag{\B^K} = 31$, so $\An{B}^K \cong \A^H$ for any $H \in \A$ and is free with $\expA{\An{B}^K}{1,13,17}$.
Hence, again by Theorem \ref{thm:Addition_Deletion}, the deletion 
$\An{B}' = \An{B} \setminus \{K\}$ is free
and thus $\tilde{\A} = \A \setminus \An{N} = \An{B}'$ is a \FFSA
with $\expA{\tilde{\A}}{1,13,17,29-\Betrag{\An{N}}}$.

Finally, let $\tilde{\A} = \A \setminus \An{N}$ be a \FFSA with $\Betrag{\An{N}} \leq 13$.
For an associated \FF $\A = \A_0 \supsetneq \ldots \supsetneq \A_n = \tilde{\A}$
with say $\A_i = \A_{i-1}' = \A_{i-1} \setminus \{H_i\}$ for $1 \leq i \leq n$,
we have $\Betrag{\A_{i-1}^{H_i}} = 31$.
So $H_i \cap H_j \subseteq K$, $j < i$, for a $K \in \A_i$ and  for $i=n$ this is condition (\ref{eq:SubsetN_FFSA}).
\end{proof}

Before we continue with the characterization of the \FFSAsp, we give a helpful partition of
the reflection arrangement $\A(\crg{31})$:

\begin{lemma}\label{lem:Part_AG31}
Let $\A = \A(\crg{31})$. 
There are exactly $6$ subcollections $M_1,\ldots,M_6 \subseteq \A$, such that
$\A \setminus M_i \cong \A(\crg{29})$, $M_i \cap M_j \cong \A(A_1^4)$ and $M_i \cap M_j \cap M_k = \emptyset$ for
$1\leq i <j<k\leq 6$.
Thus we get a partion of $\A$ into $15$ disjoint subsets $\{ M_i \cap M_j \mid 1\leq i < j \leq 6 \} =:\F$
on which $\crg{31}$ acts transitively.
\end{lemma}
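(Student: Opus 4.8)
The plan is to understand the combinatorial structure of $\A(\crg{31})$ well enough to exhibit the six subcollections $M_1,\ldots,M_6$ explicitly, then verify the intersection properties by direct computation on the defining linear forms. The statement asserts three things: that removing $M_i$ yields a copy of $\A(\crg{29})$, that pairwise intersections $M_i \cap M_j$ are copies of $\A(A_1^4)$ (four mutually orthogonal/generic hyperplanes, hence $4$ hyperplanes), and that triple intersections are empty. A quick sanity count is reassuring: $\A(\crg{31})$ has $60$ hyperplanes, $\A(\crg{29})$ has $40$, so each $M_i$ has $\Betrag{M_i} = 20$ hyperplanes; a partition of $\A$ into the $\binom{6}{2}=15$ sets $M_i \cap M_j$ with each of size $\Betrag{A_1^4}=4$ gives $15 \cdot 4 = 60$, matching exactly. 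Moreover each $M_i$ being the union of the five sets $M_i \cap M_j$ ($j \neq i$) gives $5 \cdot 4 = 20$, consistent.

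\emph{First} I would fix the explicit model of $\A(\crg{31})$ from Definition \ref{def:ArrG31} and identify the $20$ hyperplanes lying outside $\A(\crg{29})$, namely the third set $\{(1,0,0,0)^\perp,\ldots,(0,0,0,1)^\perp\}$ together with the orbit $\crgM{4}{4}{4}.(-1,1,1,1)$, which has $16$ elements; these $4 + 16 = 20$ hyperplanes are the complement $M_1 := \A \setminus \A(\crg{29})$ realizing the first $M_i$. \emph{Next}, since $\crg{31} \supseteq \crg{29}$ as reflection groups (via $\crg{29} \cong \crgM{4}{4}{4}.2$ or the relevant index), I would exploit the action of $\crg{31}$ on the set of reflection subgroups isomorphic to $\crg{29}$: the key structural input is that $\crg{31}$ contains exactly six conjugate (or otherwise distinguished) subgroups $W_1,\ldots,W_6$ each isomorphic to $\crg{29}$, and I set $M_i := \A \setminus \A(W_i)$. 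The claim $\A \setminus M_i \cong \A(\crg{29})$ is then immediate from $\A(W_i) = \A \setminus M_i$. By Lemma \ref{lem:Action_W_RSA} the count of such subarrangements is $[\crg{31} : N_{\crg{31}}(\crg{29})]$, and I would verify this normalizer index equals $6$.

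\emph{Then} the pairwise and triple intersection claims reduce to understanding how the $\A(W_i)$ overlap. Because $\A(W_i) \cap \A(W_j) = \A \setminus (M_i \cup M_j)$ and all the $\A(W_i)$ have $40$ hyperplanes inside a $60$-hyperplane arrangement, $\Betrag{M_i \cup M_j} = 60 - \Betrag{\A(W_i)\cap\A(W_j)}$; combined with $\Betrag{M_i}=\Betrag{M_j}=20$ and inclusion-exclusion, the assertion $\Betrag{M_i \cap M_j}=4$ is equivalent to $\Betrag{M_i \cup M_j} = 36$, i.e.\ $\Betrag{\A(W_i)\cap\A(W_j)}=24$. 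I would confirm these overlap numbers from the explicit orbit description and then check that the four hyperplanes in each $M_i \cap M_j$ are linearly independent (mutually transverse), so their arrangement is $\A(A_1^4)$. For the triple-intersection claim $M_i \cap M_j \cap M_k = \emptyset$, the clean route is the counting argument: if the $15$ sets $M_i \cap M_j$ are pairwise disjoint and each has size $4$, they account for all $60 = \Betrag{\A}$ hyperplanes, forcing the partition; disjointness of distinct $M_i \cap M_j$ together with every hyperplane lying in \emph{some} such set then yields that no hyperplane can lie in three of the $M_i$ simultaneously. \emph{I expect the main obstacle} to be organizing the group-theoretic bookkeeping cleanly — namely pinning down the six subgroups $\crg{29}$ inside $\crg{31}$ and their normalizer, and carrying out the (GAP-assisted, per the paper's stated methodology) verification that the $15$ intersection sets are genuinely disjoint and each of type $A_1^4$; the existence of a transitive $\crg{31}$-action on $\F$ follows once the partition is established, since $\crg{31}$ permutes the $\A(W_i)$ transitively and hence permutes their pairwise complements, with transitivity on $\F$ amounting to transitivity of the induced action on the $15$ unordered pairs, which I would deduce from the structure of the $\crg{31}/\crg{29}$ quotient acting as $\SymGrp{6}$ on six objects.
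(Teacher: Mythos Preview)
Your overall strategy matches the paper's: define $M_i$ as the complement in $\A$ of the six reflection subarrangements $\A(W_i)$ with $W_i \cong \crg{29}$, and count these six via Lemma~\ref{lem:Action_W_RSA} and the index $[\crg{31}:N_{\crg{31}}(\crg{29})]=6$. Where you diverge is in the verification of the intersection claims. The paper does not rely on inclusion--exclusion or on computing $\Betrag{\A(W_i)\cap\A(W_j)}=24$ directly; instead it observes that for $i\neq j$ the intersection $W_i\cap W_j$ is isomorphic to $\crgM{4}{4}{4}$, analyses the orbits of this subgroup on $M_i$ (sizes $16$ and $4$), and identifies $M_i\cap M_j$ with the size-$4$ orbit. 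To see that this orbit is of type $A_1^4$ it invokes the stabilizer $\crgM{4}{2}{4}$ and its unique system of imprimitivity. Disjointness of the fifteen blocks is then obtained from a second normalizer count: $[\crg{31}:N_{\crg{31}}(\crgM{4}{2}{4})]=15$, so the fifteen $M_i\cap M_j$ are the fifteen distinct systems of imprimitivity. Your counting route for the triple-intersection claim is a valid alternative once $\Betrag{M_i\cap M_j}=4$ is established for all $i<j$: since $\sum_H d(H)=6\cdot 20=120$ and $\sum_H \binom{d(H)}{2}=15\cdot 4=60$ over the $60$ hyperplanes, strict convexity of $\binom{d}{2}$ forces $d(H)=2$ for every $H$, giving the partition and the empty triple intersections simultaneously.

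One genuine gap: your argument for transitivity of $\crg{31}$ on $\F$ appeals to ``the $\crg{31}/\crg{29}$ quotient acting as $\SymGrp{6}$''. There is no such quotient since $\crg{29}$ is not normal in $\crg{31}$, and even passing to the permutation image in $\SymGrp{6}$ you have not justified that the image is $2$-transitive. The paper's argument is both simpler and complete: $\crg{31}$ acts transitively on the hyperplanes of $\A$, every hyperplane lies in exactly one block of $\F$, and a transitive action on a set induces a transitive action on the blocks of any invariant partition.
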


\begin{figure}[h!]
\setlength{\unitlength}{10mm}
\begin{picture}(12,7)

	\put(2,6){\line(1,0){10}}
	\put(2,5){\line(1,0){10}}
	\put(4,4){\line(1,0){8}}
	\put(6,3){\line(1,0){6}}
	\put(8,2){\line(1,0){4}}
	\put(10,1){\line(1,0){2}}

	\put(12,6){\line(0,-1){5}}
	\put(10,6){\line(0,-1){5}}
	\put(8,6){\line(0,-1){4}}
	\put(6,6){\line(0,-1){3}}
	\put(4,6){\line(0,-1){2}}
	\put(2,6){\line(0,-1){1}}

	\put(2.2,5.4){$M_1 \cap M_2$}
	\put(4.2,5.4){$M_1 \cap M_3$}
	\put(6.2,5.4){$M_1 \cap M_4$}
	\put(8.2,5.4){$M_1 \cap M_5$}
	\put(10.2,5.4){$M_1 \cap M_6$}

	\put(4.2,4.4){$M_2 \cap M_3$}
	\put(6.2,4.4){$M_2 \cap M_4$}
	\put(8.2,4.4){$M_2 \cap M_5$}
	\put(10.2,4.4){$M_2 \cap M_6$}

	\put(6.2,3.4){$M_3 \cap M_4$}
	\put(8.2,3.4){$M_3 \cap M_5$}
	\put(10.2,3.4){$M_3 \cap M_6$}

	\put(8.2,2.4){$M_4 \cap M_5$}
	\put(10.2,2.4){$M_4 \cap M_6$}

	\put(10.2,1.4){$M_5 \cap M_6$}


	\put(6.05,4.95){\line(1,0){1.9}}
	\put(6.05,4.05){\line(1,0){1.9}}
	\put(6.05,4.95){\line(0,-1){0.9}}
	\put(7.95,4.95){\line(0,-1){0.9}}


	\put(3,4.5){\circle{0.8}}
	\put(2.75,4.4){$M_2$}
	\put(3,6){\line(0,-1){1.1}}
	\put(3.4,4.5){\line(1,0){8.6}}
%
%
	\put(7,2.5){\circle{0.8}}
	\put(6.75,2.4){$M_4$}
	\put(7,6){\line(0,-1){3.1}}
	\put(7.4,2.5){\line(1,0){4.6}}
%

\end{picture}
\caption{The partition of $\A$ into $15$ disjoint subsets $\F = \{M_i \cap M_j$, $1 \leq i < j \leq 6\}$, each consisting of $4$ hyperplanes.}%
\label{fig:Part_A_15}
\end{figure}
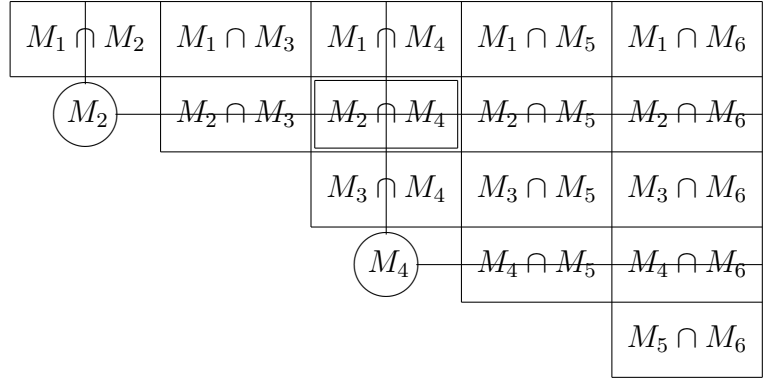

\begin{proof}
Let $W := \crg{31}$ and $W' := \crg{29} \leq W$.
Then $N_W(W') = W'$ and $\Betrag{W:W'} = 6$, so with Lemma \ref{lem:Action_W_RSA} there are exactly $6$
subarrangements, say $\B_1,\ldots,\B_6$ with $\B_i \cong \A(W') \subseteq \A$, 
(respectively 6 conjugate reflection subgroups of $W$ isomorphic to $W'$).
Now we get the $M_i$ as $M_i = \A \setminus \B_i$ and 
in particular the corresponding action of $W$ on the subcollections is transitive.

To see the claim about their intersections we look at the different orbits of reflection subgroups of $W$ on $\A$
acting on hyperplanes.
First $W'$ has 2 orbits $\An{O}_1 = \A(W')$, and $\An{O}_2 = \A \setminus \A(W') = M_i$ for an $i \in\{1,\ldots,6\}$.
Similarly a subgroup $\tilde{W'} = g^{-1}W'g \neq W'$ conjugate to $W'$  has also 2 orbits 
$\tilde{\An{O}}_1 = \A(\tilde{W'})$, and $\tilde{\An{O}}_2 = \A \setminus \A(\tilde{W'}) = M_j$ and $j \in \{1,\ldots,6\} \setminus \{i\}$.
Now the intersection $W' \cap \tilde{W'}$ of these two conjugate subgroups is isomorphic to
$\crgM{4}{4}{4} \leq W$ and $\crgM{4}{4}{4}$ has two orbits $\An{O}_{21}$, $\An{O}_{22}$ on $\An{O}_2$ of size $16$ and $4$,
respectively two orbits $\tilde{\An{O}}_{21}$, $\tilde{\An{O}}_{22}$ on $\tilde{\An{O}}_2$ of size $16$ and $4$
(see Definition \ref{def:ArrG31}).
Because of the cardinalities of $\A(W')$ and $\A(\tilde{W'})$ we have $M_i \cap M_j = \An{O}_2 \cap \tilde{\An{O}}_2 \neq \emptyset$,
and $M_i \cap M_j = \An{O}_{22} = \tilde{\An{O}}_{22}$. Since the collection $M_i \cap M_j$
is stabilized by $\crgM{4}{2}{4} \geq \crgM{4}{4}{4}$, the lines orthogonal to
the hyperplanes in $M_i \cap M_j$ are the unique system of imprimitivity $\crgM{4}{2}{4}$.
Hence we get $M_i \cap M_j \cong \A(A_1^4) = \{ \ker(x_i) \mid 1 \leq i \leq 4\}$.

Now let $W' = \crgM{4}{2}{4}$. Here we also have $N_W(W') = W'$, so $\Betrag{W:W'} = 15$,
and hence again with Lemma \ref{lem:Action_W_RSA} there are $15$ distinct
subarrangements isomorphic to $\A(W') \subseteq \A$.
Since each to $W'$ conjugate reflection subgroup of $W$ has a unique system of imprimitivity consisting of the lines orthogonal to
the hyperplanes in $M_i \cap M_j$ for $i,j \in \{1,\ldots,6\}, i \neq j$ and they are distinct, the $M_i \cap M_j$ are distinct
and disjoint. 

Finally each hyperplane in $\A$ belongs to a unique intersection $M_i \cap M_j$,
so they form a partition $\F$ of $\A$. 
Since $W$ acts transitively on $\A$, and interchanges the
systems of imprimitivity corresponding to the reflection subarrangements isomorphic to $\A(\crgM{4}{2}{4})$,
it acts transitively on $\F$.
\end{proof}

The partition $\F$ in Lemma \ref{lem:Part_AG31} can be visualized in a picture, see Figure \ref{fig:Part_A_15}.

In the above proof we used some facts about the actions and orders of complex reflection (sub)groups from the book
by Lehrer and Taylor, \cite{lehrer2009unitary}, (see in particular \cite[Ch.~8, 10.5]{lehrer2009unitary}).

Furthermore it will be helpful to know the distribution of the $\A_X, X \in \LAq{\A}{2}$ with respect to the partition
given by Lemma \ref{lem:Part_AG31}:

\begin{lemma}\label{lem:Part_A_X}
Let $H \in \A$, $X \in \A^H$, and $H \in \B_{ij} := M_i\cap M_j \in \F$ for some $1 \leq i < j \leq 6$.
For $\A_X$ there are $3$ cases:
\begin{enumerate}
\item $A_X = \{H,K_1,\ldots,K_5\} \cong \A(\crgM{4}{2}{2})$ with $K_1 \in \B$, $\{K_2,K_3\} \subseteq \B_{km} = M_k\cap M_m$,
	and $\{K_4,K_5\} \subseteq \B_{pq} = M_p\cap M_q$, with $\{i,j,k,m,p,q\} = \{1,\ldots,6\}$.
\item $A_X = \{H,K_1,K_2\} \cong \A(A_2)$ with $K_1 \in \B_{ik} = M_i \cap M_k$, and $K_2 \in \B_{jK} = M_j \cap M_k$
	for some $k \in \{1,\ldots,6\} \setminus \{i,j\}$.
\item $A_X = \{H,K\} \cong \A(A_1^2)$  with $K \in \B_{km} = M_k\cap M_m$ for some $k,m \in \{1,\ldots,6\} \setminus \{i,j\}$.
\end{enumerate}
\end{lemma}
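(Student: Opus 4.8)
The claim classifies, up to isomorphism, the localizations $\A_X$ for $X \in \LAq{\A}{2}$ a rank-$2$ flat of the reflection arrangement $\A = \A(\crg{31})$, organizing the result around the partition $\F = \{\B_{ij} = M_i \cap M_j \mid 1 \le i < j \le 6\}$ from Lemma~\ref{lem:Part_AG31}. Since $\A_X$ is the set of all hyperplanes of $\A$ containing the rank-$2$ flat $X$, and since such a localization of a reflection arrangement is again a rank-$2$ reflection arrangement (of the pointwise stabilizer of $X$), the possible isomorphism types are exactly the irreducible and reducible rank-$2$ reflection arrangements that can appear inside $\A(\crg{31})$. My plan is to first enumerate these types combinatorially, then pin down how the hyperplanes of each $\A_X$ distribute among the fifteen blocks of $\F$.

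\textbf{Step 1: identifying the isomorphism types.} The plan is to invoke the known list of rank-$2$ flats of $\A(\crg{31})$, which can be read off from the lattice data in \cite[App.~C, App.~D]{orlik1992arrangements} (these were already used in Lemma~\ref{lem:SubsetN_FFSA}). A rank-$2$ localization $\A_X$ has $\Betrag{\A_X} \in \{2, 3, 6\}$ for this arrangement, corresponding to the three stated types $\A(A_1^2)$, $\A(A_2)$, and $\A(\crgM{4}{2}{2})$ (of sizes $2$, $3$, $6$ respectively). First I would confirm that these are the only rank-$2$ localizations that occur, using the transitivity of $W = \crg{31}$ on $\A$: since $W$ acts transitively on hyperplanes, for a fixed $H$ it suffices to classify the $X \in \A^H$, and this finite check is exactly the restriction data $\expA{\A^H}{1,13,17}$ combined with the M\"obius/lattice count. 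This step is largely bookkeeping against the reference tables.

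\textbf{Step 2: distributing among the blocks of $\F$.} This is the substance of the lemma. For each type, I would track which blocks $\B_{km} \in \F$ the hyperplanes $K$ of $\A_X$ land in, using the two facts established in Lemma~\ref{lem:Part_AG31}: that $W$ permutes $\F$ and the index set $\{1,\dots,6\}$ two-transitively in the induced action, and that each block has exactly $4$ hyperplanes corresponding to the imprimitivity system of a conjugate $\crgM{4}{2}{4}$. Fixing $H \in \B_{ij}$, the key is that the remaining hyperplanes of $\A_X$ are constrained by which subgroups $M_k \cap M_m$ they must lie in, and the pattern of indices $\{i,j,k,m,p,q\}$ in each case is forced by counting: in the size-$6$ case the six hyperplanes split as $1+1+2+2$ across blocks whose index pairs exhaust a partition of $\{1,\dots,6\}$; in the size-$3$ case the two companion hyperplanes of $H$ sit in blocks sharing exactly one index with $\{i,j\}$; in the size-$2$ case the single companion lies in a block disjoint in indices from $\{i,j\}$. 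I would carry out the index-counting for the $\crgM{4}{2}{2}$ case first, as it is the most rigid, then deduce the $A_2$ and $A_1^2$ cases.

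\textbf{Main obstacle.} The hard part will be verifying the precise index-pairing combinatorics in case~(1)---showing that the five companions of $H$ distribute as $\{K_1\} \cup \{K_2,K_3\} \cup \{K_4,K_5\}$ with the index pairs $\{i,j\}, \{k,m\}, \{p,q\}$ forming a partition of $\{1,\dots,6\}$---rather than the existence of the three types, which follows from the reference tables. The cleanest route is probably to exploit the explicit action of $\crgM{4}{4}{4}$ and its supergroups on the orbit structure described in the proof of Lemma~\ref{lem:Part_AG31}: since $\A_X \cong \A(\crgM{4}{2}{2})$ corresponds to a rank-$2$ parabolic, its stabilizer acts on the pair of coordinate directions defining $X$, and the way these two directions interact with the six systems of imprimitivity forces the partition structure. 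I would make this explicit by choosing a representative $X$ (using transitivity to place $H$ in a convenient block) and reading off the six hyperplanes and their blocks directly from the defining linear forms in Definition~\ref{def:ArrG31}, which reduces the general statement to one concrete verification.
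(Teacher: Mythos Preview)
Your proposal is correct and ultimately takes the same route as the paper: the paper's proof is a single sentence deferring to an explicit computation (``explicitly writing down the partition $\F$ \ldots and a simple computation''), and your plan, after the conceptual framing in Steps~1--2, lands in exactly the same place---pick a representative $X$ via transitivity and read off the block membership of each $K \in \A_X$ from the linear forms in Definition~\ref{def:ArrG31}. The group-theoretic ``index-counting'' you sketch in Step~2 is suggestive but not made rigorous on its own; you are right to fall back to the explicit check.

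One small correction to your reading of case~(1): the six hyperplanes split $2+2+2$ across three blocks, not $1+1+2+2$. The ``$K_1 \in \B$'' in the statement means $K_1 \in \B_{ij}$, the \emph{same} block as $H$; so $\{H,K_1\} \subseteq \B_{ij}$, $\{K_2,K_3\} \subseteq \B_{km}$, $\{K_4,K_5\} \subseteq \B_{pq}$, and the three index pairs $\{i,j\},\{k,m\},\{p,q\}$ partition $\{1,\dots,6\}$. This matters downstream in Lemma~\ref{lem:FFSA_G31}, where the $2+2+2$ pattern is used repeatedly.
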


\begin{proof}
This is by explicitly writing down the partition $\F$ from Lemma \ref{lem:Part_AG31} with respect to definition \ref{def:ArrG31}
and a simple computation.
\end{proof}

The following lemma provides the next step towards a complete characterization of the \FFSAs of $\A(\crg{31})$ .

\begin{lemma}\label{lem:G29_G31_arbt_desc}
Let $\An{M} \subseteq \A := \A(\crg{31})$ be a subcollection, such that $\An{B} = \A \setminus \An{M} \cong \A(\crg{29})$.
Then for all $\An{N} \subseteq \An{M}$, $\tilde{\A}:= \A \setminus \An{N}$ is a \FFSA with exponents
$\expA{\A^{(\An{N})}}{ 1,13,17,29 - \Betrag{\An{N}} }$.
\end{lemma}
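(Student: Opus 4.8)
The plan is to obtain the statement as a direct consequence of the sufficient half of Lemma~\ref{lem:SubsetN_FFSA}, so that the free filtration and the asserted exponents come for free once the purely combinatorial condition~(\ref{eq:SubsetN_FFSA}) has been checked. First I would use Lemma~\ref{lem:Part_AG31} to pin down $\An{M}$: since $\An{B} = \A \setminus \An{M} \cong \A(\crg{29})$, the subcollection $\An{M}$ equals $M_i$ for some $i$, and as the roles of the indices $1,\ldots,6$ are symmetric I may take $i=1$, so that $\An{M} = M_1 = \bigcup_{j=2}^{6} \B_{1j}$, where the blocks $\B_{ij}=M_i\cap M_j$ form the partition $\F$. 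In particular $\An{B} = \B_1 = \A \setminus M_1$, and every hyperplane of $\An{N} \subseteq M_1$ lies in a block $\B_{1a}$, that is, in a block whose index pair contains $1$.

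Next I would reformulate the hypothesis of Lemma~\ref{lem:SubsetN_FFSA}. Condition~(\ref{eq:SubsetN_FFSA}) requires each $X \in \LAq{\An{N}}{2}$ to lie in some hyperplane of $\tilde{\A} = \A \setminus \An{N}$; since a two-dimensional subspace over $\Komplex$ is not a finite union of proper subspaces, this is equivalent to asking that the localisation $\A_X$ contain a hyperplane outside $\An{N}$. Because $\An{N} \subseteq M_1$ and $\A \setminus M_1 = \B_1$ is disjoint from $\An{N}$, it therefore suffices to produce, for every $X \in \LAq{\An{N}}{2}$, a hyperplane of $\A_X$ lying outside $M_1$; any such $L$ automatically satisfies $L \in \A_X \setminus M_1 \subseteq \tilde{\A}$ and $X \subseteq L$.

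The heart of the argument is then a short case analysis using the trichotomy of Lemma~\ref{lem:Part_A_X} for $\A_X$, tracking which of the blocks involved meet the index $1$; a block $\B_{ab}$ is contained in $M_1$ exactly when $1 \in \{a,b\}$. In the type~(1) case the three blocks of $\A_X$ partition $\{1,\ldots,6\}$ into pairs, so precisely one of them contains $1$ and $\A_X$ has two hyperplanes in $M_1$ and four outside. In the type~(2) case the three blocks are $\B_{ij},\B_{ik},\B_{jk}$; if $1 \in \{i,j,k\}$ then $1$ lies in exactly two of these pairs, giving two hyperplanes in $M_1$ and one outside, while if $1 \notin \{i,j,k\}$ none of the three hyperplanes lies in $M_1 \supseteq \An{N}$, which is incompatible with $X \in \LAq{\An{N}}{2}$. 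In the type~(3) case $\A_X = \{H,K\}$ has its two blocks indexed by disjoint pairs, so at most one hyperplane lies in $M_1$; hence this case cannot even contribute an element of $\LAq{\An{N}}{2}$, as that would force two hyperplanes of $\An{N} \subseteq M_1$ into $\A_X$. In every occurring case $\A_X$ thus meets $\B_1 = \A \setminus M_1$, supplying the hyperplane $L$ needed, so~(\ref{eq:SubsetN_FFSA}) holds and Lemma~\ref{lem:SubsetN_FFSA} shows that $\tilde{\A}$ is a free filtration subarrangement with $\expA{\tilde{\A}}{1,13,17,29-\Betrag{\An{N}}}$.

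The only delicate point is the bookkeeping of the last paragraph: I must be certain that, across all three localisation types, any rank-two flat arising from $\An{N}$ leaves at least one hyperplane of its localisation outside the fixed union $M_1$. This is precisely what the partition structure of Lemma~\ref{lem:Part_AG31} guarantees, since such a flat has at most two of its hyperplanes inside any single $M_i$, whereas its localisation has at least three hyperplanes whenever it can contribute to $\LAq{\An{N}}{2}$. Consequently no bound on $\Betrag{\An{N}}$ is required, and the conclusion holds uniformly even when $\Betrag{\An{N}}$ exceeds the threshold $13$ appearing in the necessity part of Lemma~\ref{lem:SubsetN_FFSA}.
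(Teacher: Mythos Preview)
Your proof is correct and follows essentially the same strategy as the paper: verify condition~(\ref{eq:SubsetN_FFSA}) for $\An{M}$ (hence for every $\An{N}\subseteq\An{M}$) by running through the three localisation types of Lemma~\ref{lem:Part_A_X} and observing that at most two hyperplanes of any $\A_X$ can lie in a fixed $M_i$. Your write-up is in fact more explicit than the paper's, which simply lists the three cases and asserts the conclusion; your careful block-index bookkeeping makes transparent exactly why each rank-two flat of $\An{N}$ retains a hyperplane of $\tilde{\A}$ through it.
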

\begin{proof}

Let $\An{M} \subseteq \A$ such that $\An{B} = \A \setminus \An{M} \cong \A(\crg{29})$.
We claim that $\An{M}$ satisfies condition (\ref{eq:SubsetN_FFSA}), so with Lemma \ref{lem:SubsetN_FFSA},
$\An{B}$ is a \FFSAp. Furthermore, if $\An{M}$ satisfies condition (\ref{eq:SubsetN_FFSA}), so does every subcollection
$\An{N} \subseteq \An{M}$ and $\tilde{\A} := \A \setminus \An{N}$ is a \FFSA with exponents
$\expA{\tilde{\A}}{ 1,13,17,29 - \Betrag{\An{N}} }$.

Now let $H \in \An{M}$ be an arbitrary hyperplane in $\An{M}$ and let $X \in \A^H$. 
Then by Proposition \ref{lem:Part_A_X} there are three different cases:
\begin{eqnarray*}
\text{(1) } \Betrag{\A_X} &= 2 \text{,}& \A_X = \{ H,K \}, \\
\text{(2) } \Betrag{\A_X} &= 3 \text{,}& \A_X = \{ H,H',K \}, \\
\text{(3) } \Betrag{\A_X} &= 6 \text{,}& \A_X = \{ H,H',K_1,\ldots,K_4 \},
\end{eqnarray*}
with $H' \in \An{M}$ and $K,K_i \in \B \cong \A(\crg{29})$.
For arbitrary $H, H' \in \An{M}$ there is a hyperplane $K \in \B$ such that $H \cap H' = X \subseteq K$.
Hence $\An{M}$ satisfies condition (\ref{eq:SubsetN_FFSA}) and as mentioned before with Lemma \ref{lem:SubsetN_FFSA}
$\tilde{\A}$ is a \FFSA with exponents $\expA{\tilde{\A}}{ 1,13,17,29 - \Betrag{\An{N}} }$.
\end{proof}

The next lemma completes the characterization of the \FFSAs $\tilde{\A} \subseteq \A(\crg{31})$
and enables us to prove Proposition \ref{prop:G31_NIF}.

\begin{lemma}\label{lem:FFSA_G31}
Let $\A = \A(\crg{31})$.
A subarrangement $\A \setminus \An{N} = \tilde{\A} \subseteq \A$ is a \FFSA if and only if
\begin{enumerate}
\item $\A(\crg{29}) \subseteq \tilde{\A}$ \\
or
\item $\Betrag{\An{N}} \leq 13$ and $\An{N}$ satisfies (\ref{eq:SubsetN_FFSA}) from Lemma \ref{lem:SubsetN_FFSA}.
\end{enumerate}
In both cases the exponents of $\tilde{\A}$ are $\expA{\tilde{\A}}{1,13,17,29-\Betrag{\An{N}}}$.
\end{lemma}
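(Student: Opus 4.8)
The plan is to establish the two implications separately and, inside the ``only if'' direction, to split the argument at $\Betrag{\An{N}} = 13$; in every case the exponent statement comes for free from whichever of Lemmas~\ref{lem:SubsetN_FFSA} and \ref{lem:G29_G31_arbt_desc} is being invoked. The ``if'' direction is immediate: if (2) holds then $\An{N}$ satisfies (\ref{eq:SubsetN_FFSA}) and Lemma~\ref{lem:SubsetN_FFSA} gives that $\tilde{\A}$ is a \FFSA with $\expAA{\tilde{\A}} = \{\{1,13,17,29-\Betrag{\An{N}}\}\}$; if (1) holds then, since by Lemma~\ref{lem:Part_AG31} the six copies of $\A(\crg{29})$ in $\A$ are exactly the arrangements $\A \setminus M_i$, the inclusion $\A(\crg{29}) \subseteq \tilde{\A}$ forces $\An{N} \subseteq M_i$ for some $i$, and Lemma~\ref{lem:G29_G31_arbt_desc} applies.

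For the ``only if'' direction fix a \FF $\A = \A_0 \supsetneq \dots \supsetneq \A_n = \tilde{\A}$ with $\A_i = \A_{i-1} \setminus \{H_i\}$. The basic local fact is that Theorem~\ref{thm:A_AoH_exp} forces $\Betrag{\A_{i-1}^{H_i}} = \Betrag{\A_{i-1}} - b$ for some $b \in \expAA{\A_{i-1}}$, while $\A_{i-1}^{H_i} \subseteq \A^{H_i}$ gives $\Betrag{\A_{i-1}^{H_i}} \le \Betrag{\A^{H_i}} = 31$. First I would run an induction showing that for $i \le 13$ these two constraints, together with the essentiality of $\A_i$ (which forbids a deleted exponent from dropping to $0$), leave no choice but to remove the largest exponent; hence $\A_{i-1}$ carries the exponents $\{\{1,13,17,30-i\}\}$ and $\Betrag{\A_{i-1}^{H_i}} = 31$ throughout this range. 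This already settles the case $\Betrag{\An{N}} \le 13$: by Lemma~\ref{lem:SubsetN_FFSA}, $\An{N}$ satisfies (\ref{eq:SubsetN_FFSA}), which is alternative (2), and the induction has produced the asserted exponents. The threshold $13$ is genuine, because for $i \ge 14$ one has $30 - i < 17$ and the value $b = 17$ first becomes numerically admissible, so the simple forcing breaks down exactly here.

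It remains to treat $\Betrag{\An{N}} \ge 14$, where the goal is to prove confinement $\An{N} \subseteq M_i$ (after which Lemma~\ref{lem:G29_G31_arbt_desc} supplies the exponents). I encode $\An{N}$ by the set $E$ of edges $\{i,j\}$ of the complete graph on $\{1,\dots,6\}$ for which $\An{N}$ meets the block $M_i \cap M_j$ of the partition $\F$ of Lemma~\ref{lem:Part_AG31}; since $M_i = \bigcup_{j \ne i}(M_i \cap M_j)$, confinement $\An{N} \subseteq M_i$ is equivalent to $E$ being a star centred at $i$. The covering obstructions come from Lemma~\ref{lem:Part_A_X}: a rank-two flat $X \in \LAq{\An{N}}{2}$ fails (\ref{eq:SubsetN_FFSA}) whenever $X$ is of the fragile type $\A_X \cong \A(A_1^2)$ with both hyperplanes in $\An{N}$ (which forces $E$ to avoid certain disjoint-block pairs), or of the type $\A_X \cong \A(A_2)$ with the full transversal inside $\An{N}$ (which forces $E$ to avoid complete triangles of blocks). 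The quantitative input is the incidence census read off from $\CharPolyA{\A} = (t-1)(t-13)(t-17)(t-29)$: each $H \in \A$ lies on exactly $3$ flats with $\A_X \cong \A(\crgM{4}{2}{2})$, $16$ with $\A_X \cong \A(A_2)$, and $12$ with $\A_X \cong \A(A_1^2)$, and for a block disjoint from the one containing $H$ exactly two of its four hyperplanes are fragile partners of $H$ while the other two are the $\A(\crgM{4}{2}{2})$-partners coming from the unique matching through both blocks.

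From here the argument is combinatorial. If $E$ is not a star it is, by the description of pairwise-intersecting edge families, either a triangle -- forcing $\An{N}$ into three blocks and $\Betrag{\An{N}} \le 12$ -- or it contains two disjoint edges, in which case the $\A(\crgM{4}{2}{2})$-structure shows that each four-element block is split by its three matchings into three pair-partitions whose common refinement is into singletons, so that the fragile-pair prohibition squeezes $\An{N} \cap (M_i \cap M_j)$ down as further disjoint partner blocks occur in $E$; feeding in the $\A(A_2)$-prohibition to kill triangles of blocks, a short case check over the possible non-star shapes of $E$ shows $\Betrag{\An{N}} \le 13$ in every case, contradicting $\Betrag{\An{N}} \ge 14$. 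The main obstacle I anticipate is twofold and lies entirely in this last regime: first, upgrading the step-$31$ property (and hence (\ref{eq:SubsetN_FFSA})) past $i = 13$ by ruling out the exponent branchings that $b = 17$ would permit -- equivalently, showing that the restricted $3$-arrangement which would then appear cannot be free with the exponents Addition--Deletion demands -- and second, making the case analysis on the shape of $E$ tight enough to land the threshold at exactly $13$. Both rest on the detailed incidence data above, which is where the weight of the proof sits.
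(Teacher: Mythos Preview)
Your ``if'' direction and the $\Betrag{\An{N}}\le 13$ part of ``only if'' match the paper and are fine.

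The genuine gap is in the $\Betrag{\An{N}}\ge 14$ regime. Your combinatorial argument on the edge set $E$ uses the constraints coming from condition~(\ref{eq:SubsetN_FFSA}) applied to $\An{N}$ itself, but (\ref{eq:SubsetN_FFSA}) is only established by Lemma~\ref{lem:SubsetN_FFSA} for $\Betrag{\An{N}}\le 13$; for larger $\An{N}$ it is precisely what is at stake. You flag this as your first anticipated obstacle, but the proposed remedy---``upgrading the step-$31$ property past $i=13$ by ruling out the branching $b=17$'', i.e.\ showing that a $30$-hyperplane restriction cannot be free with exponents $\{1,13,16\}$---is not how the obstruction actually works. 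There is no general freeness obstruction for such a $3$-arrangement; what happens instead is that, for the specific $\tilde{\A}$ at step $13$, every restriction $\tilde{\A}^H$ with $H\in\tilde{\A}$ has $\Betrag{\tilde{\A}^H}\le 29$ in the non-star case, so neither $30$ nor $31$ is ever attained.

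The paper avoids the circularity by never trying to impose (\ref{eq:SubsetN_FFSA}) on $\An{N}$ with $\Betrag{\An{N}}\ge 14$. It stops at the step-$13$ subset $\An{N}_{13}\subset\An{N}$ (where (\ref{eq:SubsetN_FFSA}) is guaranteed), and does a case split there. If $\An{N}_{13}\subseteq M_i$ (the star case), one shows directly from Lemma~\ref{lem:Part_A_X} that removing any $H\notin M_i$ gives $\Betrag{\tilde{\A}^H}<30$, so the filtration can only continue inside $M_i$. If $\An{N}_{13}$ is not a star, the block analysis you sketch is applied to $\An{N}_{13}$ (not to $\An{N}$), and the outcome is much more rigid than ``$\Betrag{\An{N}_{13}}\le 13$'': such an $\An{N}_{13}$ is forced to have the exact shape $\bigcup_{H'\in\B\setminus\{H\}}\A_{H\cap H'}\setminus\{H'\}$ for some $H\in\B\in\F$, sitting inside a single copy of $\A(\crgM{4}{2}{4})$. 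From this explicit form one reads off that every $H\in\tilde{\A}$ has at least two simply-intersecting partners in $\An{N}_{13}$, whence $\Betrag{\tilde{\A}^H}\le 29$ and no fourteenth step exists. So your $E$-analysis is the right tool, but it must be aimed at $\An{N}_{13}$, and the conclusion you need is this rigidity statement, not merely a cardinality bound.
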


\begin{proof}
Let $\tilde{\A} \subseteq \A$ be a subarrangement.
If $\tilde{\A}$ satisfies (1) then by Lemma \ref{lem:G29_G31_arbt_desc} it is a \FFSA and if $\tilde{\A}$ satisfies (2) then
by Lemma \ref{lem:SubsetN_FFSA} it is also a \FFSAp.
This gives one direction.

The other direction requires more effort. 
The main idea is to use the partion $\F$ of $\A$ from Lemma \ref{lem:Part_AG31}, the distribution of the localizations
$\A_X$ with respect to this partion given by Lemma \ref{lem:Part_A_X}, and some counting arguments.

So let $\A \setminus \N' = \tilde{\A}' \subseteq \A$ be a subarrangement such that $\A(\crg{29}) \nsubseteq \tilde{\A}'$,
$\Betrag{\N'} \geq 14$, and suppose that $\tilde{\A}'$ is a \FFSAp.
Since $\tilde{\A}'$ is a \FFSA there has to be another \FFSA say $\tilde{\A} \supseteq \tilde{\A}'$, 
$\tilde{\A} = \A \setminus \N$ such that $\Betrag{\N} = 13$.
By Lemma \ref{lem:SubsetN_FFSA} we then have $\bigcup_{X \in \LAq{\N}{2}} X \subseteq \bigcup_{H \in \tilde{\A}} H$
and $\expA{\tilde{\A}}{1,13,16,17}$.
We claim that there is no $H \in \tilde{\A}$ such that $\Betrag{\tilde{\A}^H} \in \{30,31\}$, so by Theorem \ref{thm:A_AoH_exp}
contradicting the fact that $\tilde{\A}'$ is a \FFSAp.

If $\A(\crg{29}) \subseteq \tilde{\A}$ then by Lemma \ref{lem:Part_AG31} there is an $1 \leq i \leq 6$ such that
$\N \subseteq M_i$.
With respect to renumbering the $M_i$ we may assume that $\N \subseteq M_1$.
Let $\B_{1j} = M_1 \cap M_j$, $2 \leq j \leq 6$ be the blocks of the partition of $M_1$ from
Lemma \ref{lem:Part_AG31}.
Since $\Betrag{\N} = 13$ we have $\B_{1j} \cap \N \neq \emptyset$, and there is a $k$ such that $\Betrag{\B_{1k} \cap \N} \geq 3$.
By $\tilde{\A}' \supsetneq \A(\crg{29})$, we have $H \notin M_1$.
But then, using Lemma \ref{lem:Part_A_X}, we see that $\Betrag{\tilde{\A}^H} < 30$ (because
$\N$ completely contains at least two localizations as in Lemma \ref{lem:Part_A_X}(2), and (3)), so $\tilde{\A}'$ is not free
by Theorem \ref{thm:A_AoH_exp} and in particular it is not a \FFSA contradicting our assumption.

If $\A(\crg{29}) \nsubseteq \tilde{\A}$ we claim that for such a \FFSA $\tilde{\A}$ with $\Betrag{\N} = 13$
there is a $H \in \A$, $H \in \B \in \F$ (see Lemma \ref{lem:Part_AG31}), such that 
\begin{align}\label{N_13}
\N =  \bigcup_{H' \in \B \setminus \{H\}} \A_{H \cap H'} \setminus \{H'\},
\end{align}
which enables us to describe $\tilde{\A}^K$ for each $K \in \tilde{\A}$.

So let $\tilde{\A} = \A \setminus \N$ be a \FFSA with $\A(\crg{29}) \nsubseteq \tilde{\A}$ and $\Betrag{\N} = 13$.
By Lemma \ref{lem:SubsetN_FFSA} $\N$ has to satisfy condition (\ref{eq:SubsetN_FFSA}).
Let $\F_\N := \{ \B \in \F \mid \N \cap \B \neq \emptyset \}$ be the blocks in the partition $\F$ of $\A$ containing the hyperplanes
from $\N$ and let $\B_{ab} := M_a \cap M_b \in \F$ ($a \neq b$, $a,b \in \{1,\ldots,6\}$).
First we notice that $\Betrag{\F_\N} \geq 4$ because $\Betrag{\N} = 13$.
Since $\A(\crg{29}) \nsubseteq \tilde{\A}$, by Lemma \ref{lem:Part_AG31} we have one of the following cases
\begin{enumerate}
\item there are $\B_{ij}, \B_{kl} \in \F_\N$, such that $\Betrag{\{i,j,k,l\}}=4$, 
\item there are $\B_{ij}, \B_{ik}, \B_{jk} \in \F_\N$, such that $\Betrag{\{i,j,k\}}=3$.
\end{enumerate}
But since $\Betrag{\F_\N} \geq 4$, in case (2) there is a $\B_{ab} \in \F_\N$ with $a \in \{i,k,l\}$ and $b \notin \{i,j,k\}$,
so we are again in case (1), (compare with Figure \ref{fig:Part_A_15}).
Hence (with possibly renumering the $M_i$) we have $\B_{12}, \B_{34} \in \F_\N$.
By the distribution of the simply intersecting hyperplanes in $\A$ with respect to $\F$ (Lemma \ref{lem:Part_A_X}(3))
and by condition (\ref{eq:SubsetN_FFSA}) we further have $\Betrag{\N\cap\B_{12}} \leq 2$, $\Betrag{\N\cap\B_{34}} \leq 2$
resulting in $\Betrag{\F_\N} \geq 5$.
Next, suppose for all $\B_{ab} \in \F$ we have $\{a,b\} \subseteq \{1,2,3,4\}$, so in particular $\N \subseteq \A(\crgM{4}{4}{4})$
(see Figure \ref{fig:Part_A_15}, Definition \ref{def:ArrG31} and Lemma \ref{lem:Part_AG31}).
Then because of $\Betrag{\N\cap\B_{12}} \leq 2$, $\Betrag{\N\cap\B_{34}} \leq 2$, $\Betrag{N} = 13$, and
$\Betrag{\F_\N} \geq 5$ we find $\B_{1a}, \B_{2b} \in \F_\N$, $a,b \in \{3,4\}$, such that $\Betrag{(\B_{1a} \cup \B_{2b}) \cap \N} \geq 5$.
But this contradicts condition (\ref{eq:SubsetN_FFSA}) by Lemma \ref{lem:Part_A_X}(2).
So there is a $\B_{ab} \in \F_\N$ with $\{a,b\} \nsubseteq \{1,2,3,4\}$.
Now for $\B_{ab}$ there are again two possible cases
\begin{enumerate}
\item $a=5$ and $b=6$,
\item $a \in \{1,2,3,4\}$ and $b \in \{5,6\}$.
\end{enumerate}
In the first case, by Lemma \ref{lem:Part_A_X}(3) and condition (\ref{eq:SubsetN_FFSA}), we then have $\Betrag{\N\cap\B} \leq 2$
for all $\B \in \F_\N$ so $\Betrag{\F_\N} \geq 7$.
So in this (after renumbering the $M_i$ once more) we may assume that we are in the second case.
In the second case, again by Lemma \ref{lem:Part_A_X}(3) and condition (\ref{eq:SubsetN_FFSA})
we then have $\Betrag{\B_{ij} \cap \N}\leq 2$ for $i \neq a, j \neq a$. 
We may assume that $a=1$ (the other cases are similar),
then only $\Betrag{(\B_{13}\cup\B_{14})\cap\N} \leq 4$ by Lemma \ref{lem:Part_A_X}(2) and condition (\ref{eq:SubsetN_FFSA}).
So in this case we also have $\Betrag{\F_\N} \geq 7$ and further $\Betrag{\B_{34}\cap\N} = 1$ by Lemma \ref{lem:Part_A_X}(3).

We remark that for a subarrangement $\C \subseteq \A$ with $\C \cong \A(\crgM{4}{2}{4})$ there is a $\B_{ij} \in \F$,
such that $\C = \B_{ij} \cup (\A \setminus (M_i \cup M_j)) = \B_{ij} \cup \bigcup_{a,b \in \{1,\ldots,6\} \setminus \{i,j\}} \B_{ab}$
(compare again with Figure \ref{fig:Part_A_15}, Definition \ref{def:ArrG31} and Lemma \ref{lem:Part_AG31}).
If $\N$ is of the claimed form (\ref{N_13}), by Lemma \ref{lem:Part_A_X}(1) we have $\N \subseteq \A(\crgM{4}{2}{4})$
and furthermore, since $\Betrag{\N} = 13$ and $\N$ has to satisfy condition (\ref{eq:SubsetN_FFSA}), with Lemma 
\ref{lem:Part_A_X} one easily sees, that if $\N \subseteq \A(\crgM{4}{2}{4})$, it has to be of the form (\ref{N_13}).

To finally prove the claim, we want to show that $\N \subseteq \A(\crgM{4}{2}{4})$ (for one possible realization of $\A(\crgM{4}{2}{4})$
inside $\A$ given by $\F$).

So far we have that there are $\B_{12}, \B_{34}, \B_{1b} \in \F_\N$ ($b \in \{5,6\}$).
This can be visualized in the following picture (Figure \ref{Fig_N_p}(a), compare also with Figure \ref{fig:Part_A_15}), 
where the boxes represent the partition $\F$, 
a double circle represents a hyperplane already fixed in $\N$ by the above considerations,
a solid circle a hyperplane which can not belong to $\N$ without violating condition (\ref{eq:SubsetN_FFSA}),
and a non solid circle a hyperplane which may or may not belong to $\N$.

\begin{figure}[h!]
\setlength{\unitlength}{4mm}

\begin{subfigure}[b]{0.05\textwidth}
\begin{picture}(1.7,8)
\put(1.5,4){(a)}
\end{picture}
\end{subfigure}
\begin{subfigure}[b]{0.3\textwidth}
\begin{picture}(10,8)
\put(0,7.5){\line(1,0){10}}
\put(0,6){\line(1,0){10}}
\put(2,4.5){\line(1,0){8}}
\put(4,3){\line(1,0){6}}
\put(6,1.5){\line(1,0){4}}
\put(8,0){\line(1,0){2}}
\put(10,7.5){\line(0,-1){7.5}}
\put(8,7.5){\line(0,-1){7.5}}
\put(6,7.5){\line(0,-1){6}}
\put(4,7.5){\line(0,-1){4.5}}
\put(2,7.5){\line(0,-1){3}}
\put(0,7.5){\line(0,-1){1.5}}

\put(0.5,7.1){\circle{0.6}}
\put(0.5,7.1){\circle*{0.38}}
\put(1.5,7.1){\circle{0.6}}
\put(0.5,6.4){\circle*{0.6}}
\put(1.5,6.4){\circle*{0.6}}
\put(2.5,7.1){\circle{0.6}}
\put(3.5,7.1){\circle{0.6}}
\put(2.5,6.4){\circle{0.6}}
\put(3.5,6.4){\circle{0.6}}
\put(4.5,7.1){\circle{0.6}}
\put(5.5,7.1){\circle{0.6}}
\put(4.5,6.4){\circle{0.6}}
\put(5.5,6.4){\circle{0.6}}
\put(6.5,7.1){\circle*{0.6}}
\put(7.5,7.1){\circle*{0.6}}
\put(6.5,6.4){\circle{0.6}}
\put(7.5,6.4){\circle{0.6}}
\put(8.5,7.1){\circle{0.6}}
\put(8.5,7.1){\circle*{0.38}}
\put(9.5,7.1){\circle{0.6}}
\put(8.5,6.4){\circle*{0.6}}
\put(9.5,6.4){\circle*{0.6}}

\put(2.5,5.6){\circle{0.6}}
\put(3.5,5.6){\circle{0.6}}
\put(2.5,4.9){\circle*{0.6}}
\put(3.5,4.9){\circle*{0.6}}
\put(4.5,5.6){\circle{0.6}}
\put(5.5,5.6){\circle{0.6}}
\put(4.5,4.9){\circle*{0.6}}
\put(5.5,4.9){\circle*{0.6}}
\put(6.5,5.6){\circle{0.6}}
\put(7.5,5.6){\circle{0.6}}
\put(6.5,4.9){\circle*{0.6}}
\put(7.5,4.9){\circle*{0.6}}
\put(8.5,5.6){\circle*{0.6}}
\put(9.5,5.6){\circle*{0.6}}
\put(8.5,4.9){\circle{0.6}}
\put(9.5,4.9){\circle{0.6}}

\put(4.5,4.1){\circle*{0.6}}
\put(5.5,4.1){\circle{0.6}}
\put(5.5,4.1){\circle*{0.38}}
\put(4.5,3.4){\circle*{0.6}}
\put(5.5,3.4){\circle*{0.6}}
\put(6.5,4.1){\circle*{0.6}}
\put(7.5,4.1){\circle{0.6}}
\put(6.5,3.4){\circle*{0.6}}
\put(7.5,3.4){\circle*{0.6}}
\put(8.5,4.1){\circle{0.6}}
\put(9.5,4.1){\circle{0.6}}
\put(8.5,3.4){\circle*{0.6}}
\put(9.5,3.4){\circle*{0.6}}

\put(6.5,2.6){\circle*{0.6}}
\put(7.5,2.6){\circle{0.6}}
\put(6.5,1.9){\circle*{0.6}}
\put(7.5,1.9){\circle*{0.6}}
\put(8.5,2.6){\circle{0.6}}
\put(9.5,2.6){\circle{0.6}}
\put(8.5,1.9){\circle*{0.6}}
\put(9.5,1.9){\circle*{0.6}}

\put(8.5,1.1){\circle{0.6}}
\put(9.5,1.1){\circle{0.6}}
\put(8.5,0.4){\circle*{0.6}}
\put(9.5,0.4){\circle*{0.6}}

\end{picture}
\end{subfigure}
\begin{subfigure}[b]{0.05\textwidth}
\begin{picture}(1.7,8)
\put(1.5,4){(b)}
\end{picture}
\end{subfigure}
\begin{subfigure}[b]{0.3\textwidth}
\begin{picture}(10,8)
\put(0,7.5){\line(1,0){10}}
\put(0,6){\line(1,0){10}}
\put(2,4.5){\line(1,0){8}}
\put(4,3){\line(1,0){6}}
\put(6,1.5){\line(1,0){4}}
\put(8,0){\line(1,0){2}}
\put(10,7.5){\line(0,-1){7.5}}
\put(8,7.5){\line(0,-1){7.5}}
\put(6,7.5){\line(0,-1){6}}
\put(4,7.5){\line(0,-1){4.5}}
\put(2,7.5){\line(0,-1){3}}
\put(0,7.5){\line(0,-1){1.5}}

\put(0.5,7.1){\circle{0.6}}
\put(0.5,7.1){\circle*{0.38}}
\put(1.5,7.1){\circle*{0.6}}
\put(0.5,6.4){\circle*{0.6}}
\put(1.5,6.4){\circle*{0.6}}
\put(2.5,7.1){\circle{0.6}}
\put(3.5,7.1){\circle*{0.6}}
\put(2.5,6.4){\circle{0.6}}
\put(3.5,6.4){\circle{0.6}}
\put(4.5,7.1){\circle{0.6}}
\put(5.5,7.1){\circle*{0.6}}
\put(4.5,6.4){\circle*{0.6}}
\put(5.5,6.4){\circle{0.6}}
\put(6.5,7.1){\circle*{0.6}}
\put(7.5,7.1){\circle*{0.6}}
\put(6.5,6.4){\circle*{0.6}}
\put(7.5,6.4){\circle{0.6}}
\put(8.5,7.1){\circle{0.6}}
\put(8.5,7.1){\circle*{0.38}}
\put(9.5,7.1){\circle{0.6}}
\put(8.5,6.4){\circle*{0.6}}
\put(9.5,6.4){\circle*{0.6}}

\put(2.5,5.6){\circle{0.6}}
\put(3.5,5.6){\circle{0.6}}
\put(2.5,4.9){\circle*{0.6}}
\put(3.5,4.9){\circle*{0.6}}
\put(4.5,5.6){\circle{0.6}}
\put(5.5,5.6){\circle*{0.6}}
\put(4.5,4.9){\circle*{0.6}}
\put(5.5,4.9){\circle*{0.6}}
\put(6.5,5.6){\circle{0.6}}
\put(7.5,5.6){\circle*{0.6}}
\put(6.5,4.9){\circle*{0.6}}
\put(7.5,4.9){\circle*{0.6}}
\put(8.5,5.6){\circle*{0.6}}
\put(9.5,5.6){\circle*{0.6}}
\put(8.5,4.9){\circle{0.6}}
\put(9.5,4.9){\circle{0.6}}

\put(4.5,4.1){\circle*{0.6}}
\put(5.5,4.1){\circle{0.6}}
\put(5.5,4.1){\circle*{0.38}}
\put(4.5,3.4){\circle*{0.6}}
\put(5.5,3.4){\circle*{0.6}}
\put(6.5,4.1){\circle*{0.6}}
\put(7.5,4.1){\circle{0.6}}
\put(6.5,3.4){\circle*{0.6}}
\put(7.5,3.4){\circle*{0.6}}
\put(8.5,4.1){\circle{0.6}}
\put(9.5,4.1){\circle{0.6}}
\put(9.5,4.1){\circle*{0.38}}
\put(8.5,3.4){\circle*{0.6}}
\put(9.5,3.4){\circle*{0.6}}

\put(6.5,2.6){\circle*{0.6}}
\put(7.5,2.6){\circle{0.6}}
\put(6.5,1.9){\circle*{0.6}}
\put(7.5,1.9){\circle*{0.6}}
\put(8.5,2.6){\circle{0.6}}
\put(9.5,2.6){\circle{0.6}}
\put(8.5,1.9){\circle*{0.6}}
\put(9.5,1.9){\circle*{0.6}}

\put(8.5,1.1){\circle{0.6}}
\put(9.5,1.1){\circle{0.6}}
\put(8.5,0.4){\circle*{0.6}}
\put(9.5,0.4){\circle*{0.6}}

\end{picture}
\end{subfigure}

\caption{Possible choices for hyperplanes in $\N$.}%
\label{Fig_N_p}
\end{figure}
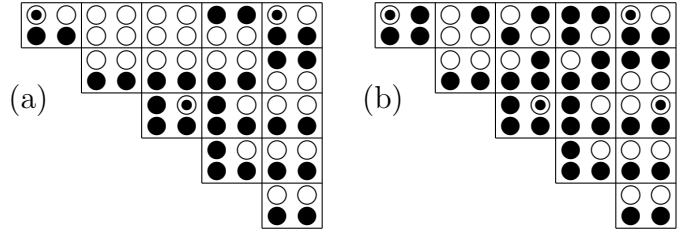

Suppose that there is a $\B_{cd} \in \F_\N$ such that $\{c,d\} \cap \{3,4\} \neq \emptyset$.
This is the case if and only if $\N \subseteq \A(\crgM{4}{2}{4})$ by our remark before.

Then the hyperplanes left to be chosen for $\N$ reduce considerably (see Figure \ref{Fig_N_p}(b)).

If we proceed in this manner using the same arguments as above we arrive at a contradiction to $\Betrag{\N} = 13$,
condition (\ref{eq:SubsetN_FFSA}), and Lemma \ref{lem:Part_A_X}.

To finish the proof, let $\tilde{\A} = \A \setminus \N$ for an $\N$ of the form (\ref{N_13}).
Then by Lemma \ref{lem:Part_A_X}(3) and the distribution of the $H \in \tilde{\A}$ 
with respect to $\F$ we have $\Betrag{\tilde{\A}^H} \leq 29$ since for $H$ there are at least two hyperplanes in $\N$
simply intersecting $H$ and we are done.
\end{proof}

\begin{example}
We illustrate the change of the set of hyperplanes which can be added to $\An{N}$
along a free filtration from $\A$ to $\A \setminus \An{N} = \tilde{\A}$ with $\Betrag{\tilde{\A}} =47$,
$\A(\crg{29}) \nsubseteq \tilde{\A}$, by the following sequence of pictures (Figure \ref{Fig_Rem_M31}).
Each circle represents a hyperplane in the \FFSA $\A_i$, a solid circle represents a hyperplane
which we can not add to $\An{N}$ without violating condition (\ref{eq:SubsetN_FFSA}) 
from Lemma \ref{lem:SubsetN_FFSA}. 
A non-solid circle represents a hyperplane, which can be added to $\An{N}$, such that  (\ref{eq:SubsetN_FFSA}) ist
still satisfied.
The different boxes represent the partition $\F$
of $\A$ into subsets of $4$ hyperplanes given by Lemma \ref{lem:FFSA_G31}:

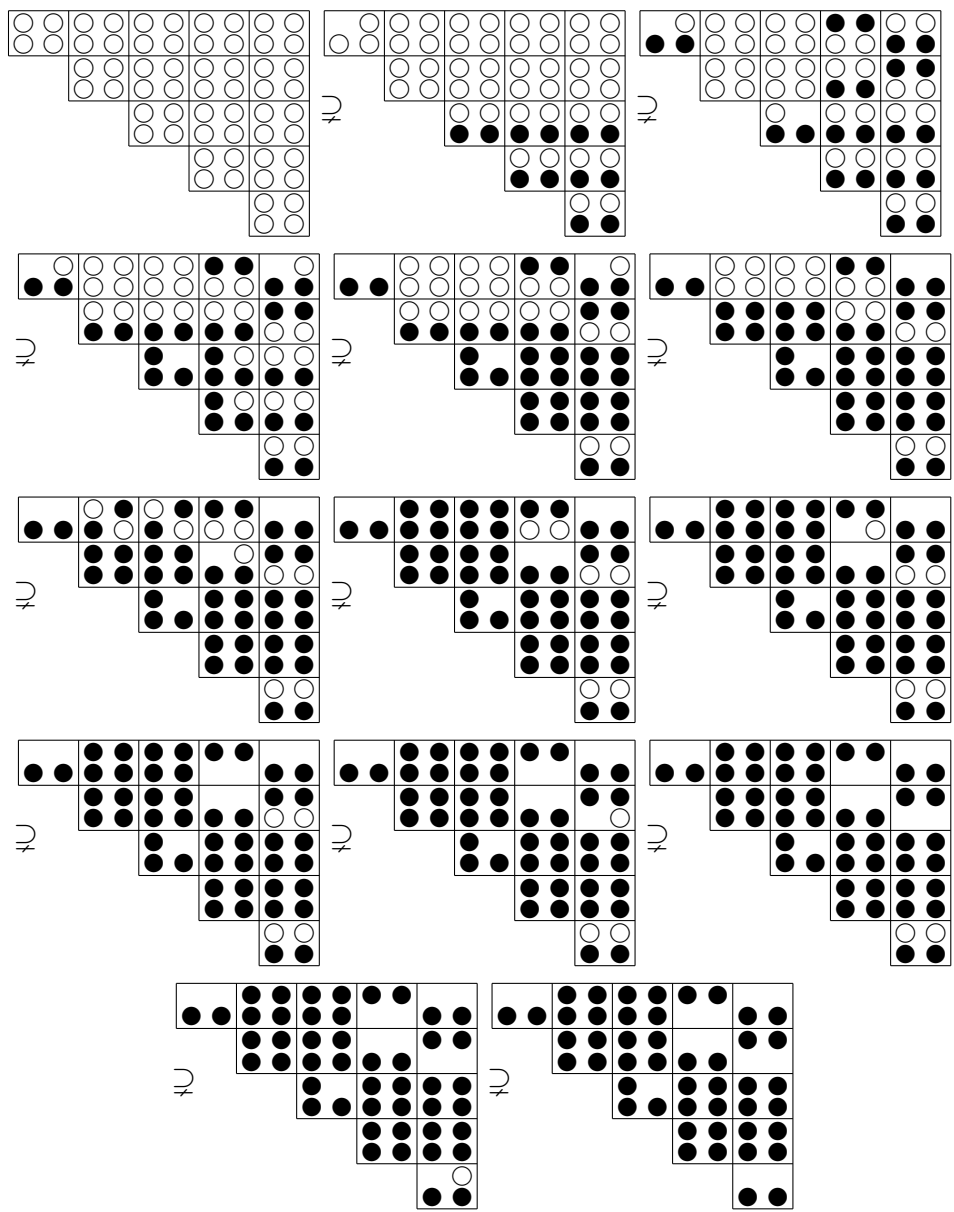
\begin{figure}[h!]
\setlength{\unitlength}{4mm}

\begin{subfigure}[b]{0.3\textwidth}
\begin{picture}(10,8)
\put(0,7.5){\line(1,0){10}}
\put(0,6){\line(1,0){10}}
\put(2,4.5){\line(1,0){8}}
\put(4,3){\line(1,0){6}}
\put(6,1.5){\line(1,0){4}}
\put(8,0){\line(1,0){2}}
\put(10,7.5){\line(0,-1){7.5}}
\put(8,7.5){\line(0,-1){7.5}}
\put(6,7.5){\line(0,-1){6}}
\put(4,7.5){\line(0,-1){4.5}}
\put(2,7.5){\line(0,-1){3}}
\put(0,7.5){\line(0,-1){1.5}}

\put(0.5,7.1){\circle{0.6}}
\put(1.5,7.1){\circle{0.6}}
\put(0.5,6.4){\circle{0.6}}
\put(1.5,6.4){\circle{0.6}}
\put(2.5,7.1){\circle{0.6}}
\put(3.5,7.1){\circle{0.6}}
\put(2.5,6.4){\circle{0.6}}
\put(3.5,6.4){\circle{0.6}}
\put(4.5,7.1){\circle{0.6}}
\put(5.5,7.1){\circle{0.6}}
\put(4.5,6.4){\circle{0.6}}
\put(5.5,6.4){\circle{0.6}}
\put(6.5,7.1){\circle{0.6}}
\put(7.5,7.1){\circle{0.6}}
\put(6.5,6.4){\circle{0.6}}
\put(7.5,6.4){\circle{0.6}}
\put(8.5,7.1){\circle{0.6}}
\put(9.5,7.1){\circle{0.6}}
\put(8.5,6.4){\circle{0.6}}
\put(9.5,6.4){\circle{0.6}}

\put(2.5,5.6){\circle{0.6}}
\put(3.5,5.6){\circle{0.6}}
\put(2.5,4.9){\circle{0.6}}
\put(3.5,4.9){\circle{0.6}}
\put(4.5,5.6){\circle{0.6}}
\put(5.5,5.6){\circle{0.6}}
\put(4.5,4.9){\circle{0.6}}
\put(5.5,4.9){\circle{0.6}}
\put(6.5,5.6){\circle{0.6}}
\put(7.5,5.6){\circle{0.6}}
\put(6.5,4.9){\circle{0.6}}
\put(7.5,4.9){\circle{0.6}}
\put(8.5,5.6){\circle{0.6}}
\put(9.5,5.6){\circle{0.6}}
\put(8.5,4.9){\circle{0.6}}
\put(9.5,4.9){\circle{0.6}}

\put(4.5,4.1){\circle{0.6}}
\put(5.5,4.1){\circle{0.6}}
\put(4.5,3.4){\circle{0.6}}
\put(5.5,3.4){\circle{0.6}}
\put(6.5,4.1){\circle{0.6}}
\put(7.5,4.1){\circle{0.6}}
\put(6.5,3.4){\circle{0.6}}
\put(7.5,3.4){\circle{0.6}}
\put(8.5,4.1){\circle{0.6}}
\put(9.5,4.1){\circle{0.6}}
\put(8.5,3.4){\circle{0.6}}
\put(9.5,3.4){\circle{0.6}}

\put(6.5,2.6){\circle{0.6}}
\put(7.5,2.6){\circle{0.6}}
\put(6.5,1.9){\circle{0.6}}
\put(7.5,1.9){\circle{0.6}}
\put(8.5,2.6){\circle{0.6}}
\put(9.5,2.6){\circle{0.6}}
\put(8.5,1.9){\circle{0.6}}
\put(9.5,1.9){\circle{0.6}}

\put(8.5,1.1){\circle{0.6}}
\put(9.5,1.1){\circle{0.6}}
\put(8.5,0.4){\circle{0.6}}
\put(9.5,0.4){\circle{0.6}}

\end{picture}
\end{subfigure}
\begin{subfigure}[b]{0.01\textwidth}
\begin{picture}(0.5,8)
\put(0.5,4){$\supsetneq$}
\end{picture}
\end{subfigure}
\begin{subfigure}[b]{0.3\textwidth}
\begin{picture}(10,8)
\put(0,7.5){\line(1,0){10}}
\put(0,6){\line(1,0){10}}
\put(2,4.5){\line(1,0){8}}
\put(4,3){\line(1,0){6}}
\put(6,1.5){\line(1,0){4}}
\put(8,0){\line(1,0){2}}
\put(10,7.5){\line(0,-1){7.5}}
\put(8,7.5){\line(0,-1){7.5}}
\put(6,7.5){\line(0,-1){6}}
\put(4,7.5){\line(0,-1){4.5}}
\put(2,7.5){\line(0,-1){3}}
\put(0,7.5){\line(0,-1){1.5}}

\put(1.5,7.1){\circle{0.6}}
\put(0.5,6.4){\circle{0.6}}
\put(1.5,6.4){\circle{0.6}}
\put(2.5,7.1){\circle{0.6}}
\put(3.5,7.1){\circle{0.6}}
\put(2.5,6.4){\circle{0.6}}
\put(3.5,6.4){\circle{0.6}}
\put(4.5,7.1){\circle{0.6}}
\put(5.5,7.1){\circle{0.6}}
\put(4.5,6.4){\circle{0.6}}
\put(5.5,6.4){\circle{0.6}}
\put(6.5,7.1){\circle{0.6}}
\put(7.5,7.1){\circle{0.6}}
\put(6.5,6.4){\circle{0.6}}
\put(7.5,6.4){\circle{0.6}}
\put(8.5,7.1){\circle{0.6}}
\put(9.5,7.1){\circle{0.6}}
\put(8.5,6.4){\circle{0.6}}
\put(9.5,6.4){\circle{0.6}}

\put(2.5,5.6){\circle{0.6}}
\put(3.5,5.6){\circle{0.6}}
\put(2.5,4.9){\circle{0.6}}
\put(3.5,4.9){\circle{0.6}}
\put(4.5,5.6){\circle{0.6}}
\put(5.5,5.6){\circle{0.6}}
\put(4.5,4.9){\circle{0.6}}
\put(5.5,4.9){\circle{0.6}}
\put(6.5,5.6){\circle{0.6}}
\put(7.5,5.6){\circle{0.6}}
\put(6.5,4.9){\circle{0.6}}
\put(7.5,4.9){\circle{0.6}}
\put(8.5,5.6){\circle{0.6}}
\put(9.5,5.6){\circle{0.6}}
\put(8.5,4.9){\circle{0.6}}
\put(9.5,4.9){\circle{0.6}}

\put(4.5,4.1){\circle{0.6}}
\put(5.5,4.1){\circle{0.6}}
\put(4.5,3.4){\circle*{0.6}}
\put(5.5,3.4){\circle*{0.6}}
\put(6.5,4.1){\circle{0.6}}
\put(7.5,4.1){\circle{0.6}}
\put(6.5,3.4){\circle*{0.6}}
\put(7.5,3.4){\circle*{0.6}}
\put(8.5,4.1){\circle{0.6}}
\put(9.5,4.1){\circle{0.6}}
\put(8.5,3.4){\circle*{0.6}}
\put(9.5,3.4){\circle*{0.6}}

\put(6.5,2.6){\circle{0.6}}
\put(7.5,2.6){\circle{0.6}}
\put(6.5,1.9){\circle*{0.6}}
\put(7.5,1.9){\circle*{0.6}}
\put(8.5,2.6){\circle{0.6}}
\put(9.5,2.6){\circle{0.6}}
\put(8.5,1.9){\circle*{0.6}}
\put(9.5,1.9){\circle*{0.6}}

\put(8.5,1.1){\circle{0.6}}
\put(9.5,1.1){\circle{0.6}}
\put(8.5,0.4){\circle*{0.6}}
\put(9.5,0.4){\circle*{0.6}}

\end{picture}
\end{subfigure}
\begin{subfigure}[b]{0.01\textwidth}
\begin{picture}(0.5,8)
\put(0.5,4){$\supsetneq$}
\end{picture}
\end{subfigure}
\begin{subfigure}[b]{0.3\textwidth}
\begin{picture}(10,8)
\put(0,7.5){\line(1,0){10}}
\put(0,6){\line(1,0){10}}
\put(2,4.5){\line(1,0){8}}
\put(4,3){\line(1,0){6}}
\put(6,1.5){\line(1,0){4}}
\put(8,0){\line(1,0){2}}
\put(10,7.5){\line(0,-1){7.5}}
\put(8,7.5){\line(0,-1){7.5}}
\put(6,7.5){\line(0,-1){6}}
\put(4,7.5){\line(0,-1){4.5}}
\put(2,7.5){\line(0,-1){3}}
\put(0,7.5){\line(0,-1){1.5}}

\put(1.5,7.1){\circle{0.6}}
\put(0.5,6.4){\circle*{0.6}}
\put(1.5,6.4){\circle*{0.6}}
\put(2.5,7.1){\circle{0.6}}
\put(3.5,7.1){\circle{0.6}}
\put(2.5,6.4){\circle{0.6}}
\put(3.5,6.4){\circle{0.6}}
\put(4.5,7.1){\circle{0.6}}
\put(5.5,7.1){\circle{0.6}}
\put(4.5,6.4){\circle{0.6}}
\put(5.5,6.4){\circle{0.6}}
\put(6.5,7.1){\circle*{0.6}}
\put(7.5,7.1){\circle*{0.6}}
\put(6.5,6.4){\circle{0.6}}
\put(7.5,6.4){\circle{0.6}}
\put(8.5,7.1){\circle{0.6}}
\put(9.5,7.1){\circle{0.6}}
\put(8.5,6.4){\circle*{0.6}}
\put(9.5,6.4){\circle*{0.6}}

\put(2.5,5.6){\circle{0.6}}
\put(3.5,5.6){\circle{0.6}}
\put(2.5,4.9){\circle{0.6}}
\put(3.5,4.9){\circle{0.6}}
\put(4.5,5.6){\circle{0.6}}
\put(5.5,5.6){\circle{0.6}}
\put(4.5,4.9){\circle{0.6}}
\put(5.5,4.9){\circle{0.6}}
\put(6.5,5.6){\circle{0.6}}
\put(7.5,5.6){\circle{0.6}}
\put(6.5,4.9){\circle*{0.6}}
\put(7.5,4.9){\circle*{0.6}}
\put(8.5,5.6){\circle*{0.6}}
\put(9.5,5.6){\circle*{0.6}}
\put(8.5,4.9){\circle{0.6}}
\put(9.5,4.9){\circle{0.6}}

\put(4.5,4.1){\circle{0.6}}
\put(4.5,3.4){\circle*{0.6}}
\put(5.5,3.4){\circle*{0.6}}
\put(6.5,4.1){\circle{0.6}}
\put(7.5,4.1){\circle{0.6}}
\put(6.5,3.4){\circle*{0.6}}
\put(7.5,3.4){\circle*{0.6}}
\put(8.5,4.1){\circle{0.6}}
\put(9.5,4.1){\circle{0.6}}
\put(8.5,3.4){\circle*{0.6}}
\put(9.5,3.4){\circle*{0.6}}

\put(6.5,2.6){\circle{0.6}}
\put(7.5,2.6){\circle{0.6}}
\put(6.5,1.9){\circle*{0.6}}
\put(7.5,1.9){\circle*{0.6}}
\put(8.5,2.6){\circle{0.6}}
\put(9.5,2.6){\circle{0.6}}
\put(8.5,1.9){\circle*{0.6}}
\put(9.5,1.9){\circle*{0.6}}

\put(8.5,1.1){\circle{0.6}}
\put(9.5,1.1){\circle{0.6}}
\put(8.5,0.4){\circle*{0.6}}
\put(9.5,0.4){\circle*{0.6}}

\end{picture}
\end{subfigure}
\\
\begin{subfigure}[b]{0.01\textwidth}
\begin{picture}(0.5,8)
\put(0.5,4){$\supsetneq$}
\end{picture}
\end{subfigure}
\begin{subfigure}[b]{0.3\textwidth}
\begin{picture}(10,8)
\put(0,7.5){\line(1,0){10}}
\put(0,6){\line(1,0){10}}
\put(2,4.5){\line(1,0){8}}
\put(4,3){\line(1,0){6}}
\put(6,1.5){\line(1,0){4}}
\put(8,0){\line(1,0){2}}
\put(10,7.5){\line(0,-1){7.5}}
\put(8,7.5){\line(0,-1){7.5}}
\put(6,7.5){\line(0,-1){6}}
\put(4,7.5){\line(0,-1){4.5}}
\put(2,7.5){\line(0,-1){3}}
\put(0,7.5){\line(0,-1){1.5}}

\put(1.5,7.1){\circle{0.6}}
\put(0.5,6.4){\circle*{0.6}}
\put(1.5,6.4){\circle*{0.6}}
\put(2.5,7.1){\circle{0.6}}
\put(3.5,7.1){\circle{0.6}}
\put(2.5,6.4){\circle{0.6}}
\put(3.5,6.4){\circle{0.6}}
\put(4.5,7.1){\circle{0.6}}
\put(5.5,7.1){\circle{0.6}}
\put(4.5,6.4){\circle{0.6}}
\put(5.5,6.4){\circle{0.6}}
\put(6.5,7.1){\circle*{0.6}}
\put(7.5,7.1){\circle*{0.6}}
\put(6.5,6.4){\circle{0.6}}
\put(7.5,6.4){\circle{0.6}}
\put(9.5,7.1){\circle{0.6}}
\put(8.5,6.4){\circle*{0.6}}
\put(9.5,6.4){\circle*{0.6}}

\put(2.5,5.6){\circle{0.6}}
\put(3.5,5.6){\circle{0.6}}
\put(2.5,4.9){\circle*{0.6}}
\put(3.5,4.9){\circle*{0.6}}
\put(4.5,5.6){\circle{0.6}}
\put(5.5,5.6){\circle{0.6}}
\put(4.5,4.9){\circle*{0.6}}
\put(5.5,4.9){\circle*{0.6}}
\put(6.5,5.6){\circle{0.6}}
\put(7.5,5.6){\circle{0.6}}
\put(6.5,4.9){\circle*{0.6}}
\put(7.5,4.9){\circle*{0.6}}
\put(8.5,5.6){\circle*{0.6}}
\put(9.5,5.6){\circle*{0.6}}
\put(8.5,4.9){\circle{0.6}}
\put(9.5,4.9){\circle{0.6}}

\put(4.5,4.1){\circle*{0.6}}
\put(4.5,3.4){\circle*{0.6}}
\put(5.5,3.4){\circle*{0.6}}
\put(6.5,4.1){\circle*{0.6}}
\put(7.5,4.1){\circle{0.6}}
\put(6.5,3.4){\circle*{0.6}}
\put(7.5,3.4){\circle*{0.6}}
\put(8.5,4.1){\circle{0.6}}
\put(9.5,4.1){\circle{0.6}}
\put(8.5,3.4){\circle*{0.6}}
\put(9.5,3.4){\circle*{0.6}}

\put(6.5,2.6){\circle*{0.6}}
\put(7.5,2.6){\circle{0.6}}
\put(6.5,1.9){\circle*{0.6}}
\put(7.5,1.9){\circle*{0.6}}
\put(8.5,2.6){\circle{0.6}}
\put(9.5,2.6){\circle{0.6}}
\put(8.5,1.9){\circle*{0.6}}
\put(9.5,1.9){\circle*{0.6}}

\put(8.5,1.1){\circle{0.6}}
\put(9.5,1.1){\circle{0.6}}
\put(8.5,0.4){\circle*{0.6}}
\put(9.5,0.4){\circle*{0.6}}

\end{picture}
\end{subfigure}
\begin{subfigure}[b]{0.01\textwidth}
\begin{picture}(0.5,8)
\put(0.5,4){$\supsetneq$}
\end{picture}
\end{subfigure}
\begin{subfigure}[b]{0.3\textwidth}
\begin{picture}(10,8)
\put(0,7.5){\line(1,0){10}}
\put(0,6){\line(1,0){10}}
\put(2,4.5){\line(1,0){8}}
\put(4,3){\line(1,0){6}}
\put(6,1.5){\line(1,0){4}}
\put(8,0){\line(1,0){2}}
\put(10,7.5){\line(0,-1){7.5}}
\put(8,7.5){\line(0,-1){7.5}}
\put(6,7.5){\line(0,-1){6}}
\put(4,7.5){\line(0,-1){4.5}}
\put(2,7.5){\line(0,-1){3}}
\put(0,7.5){\line(0,-1){1.5}}

\put(0.5,6.4){\circle*{0.6}}
\put(1.5,6.4){\circle*{0.6}}
\put(2.5,7.1){\circle{0.6}}
\put(3.5,7.1){\circle{0.6}}
\put(2.5,6.4){\circle{0.6}}
\put(3.5,6.4){\circle{0.6}}
\put(4.5,7.1){\circle{0.6}}
\put(5.5,7.1){\circle{0.6}}
\put(4.5,6.4){\circle{0.6}}
\put(5.5,6.4){\circle{0.6}}
\put(6.5,7.1){\circle*{0.6}}
\put(7.5,7.1){\circle*{0.6}}
\put(6.5,6.4){\circle{0.6}}
\put(7.5,6.4){\circle{0.6}}
\put(9.5,7.1){\circle{0.6}}
\put(8.5,6.4){\circle*{0.6}}
\put(9.5,6.4){\circle*{0.6}}

\put(2.5,5.6){\circle{0.6}}
\put(3.5,5.6){\circle{0.6}}
\put(2.5,4.9){\circle*{0.6}}
\put(3.5,4.9){\circle*{0.6}}
\put(4.5,5.6){\circle{0.6}}
\put(5.5,5.6){\circle{0.6}}
\put(4.5,4.9){\circle*{0.6}}
\put(5.5,4.9){\circle*{0.6}}
\put(6.5,5.6){\circle{0.6}}
\put(7.5,5.6){\circle{0.6}}
\put(6.5,4.9){\circle*{0.6}}
\put(7.5,4.9){\circle*{0.6}}
\put(8.5,5.6){\circle*{0.6}}
\put(9.5,5.6){\circle*{0.6}}
\put(8.5,4.9){\circle{0.6}}
\put(9.5,4.9){\circle{0.6}}

\put(4.5,4.1){\circle*{0.6}}
\put(4.5,3.4){\circle*{0.6}}
\put(5.5,3.4){\circle*{0.6}}
\put(6.5,4.1){\circle*{0.6}}
\put(7.5,4.1){\circle*{0.6}}
\put(6.5,3.4){\circle*{0.6}}
\put(7.5,3.4){\circle*{0.6}}
\put(8.5,4.1){\circle*{0.6}}
\put(9.5,4.1){\circle*{0.6}}
\put(8.5,3.4){\circle*{0.6}}
\put(9.5,3.4){\circle*{0.6}}

\put(6.5,2.6){\circle*{0.6}}
\put(7.5,2.6){\circle*{0.6}}
\put(6.5,1.9){\circle*{0.6}}
\put(7.5,1.9){\circle*{0.6}}
\put(8.5,2.6){\circle*{0.6}}
\put(9.5,2.6){\circle*{0.6}}
\put(8.5,1.9){\circle*{0.6}}
\put(9.5,1.9){\circle*{0.6}}

\put(8.5,1.1){\circle{0.6}}
\put(9.5,1.1){\circle{0.6}}
\put(8.5,0.4){\circle*{0.6}}
\put(9.5,0.4){\circle*{0.6}}

\end{picture}
\end{subfigure}
\begin{subfigure}[b]{0.01\textwidth}
\begin{picture}(0.5,8)
\put(0.5,4){$\supsetneq$}
\end{picture}
\end{subfigure}
\begin{subfigure}[b]{0.3\textwidth}
\begin{picture}(10,8)
\put(0,7.5){\line(1,0){10}}
\put(0,6){\line(1,0){10}}
\put(2,4.5){\line(1,0){8}}
\put(4,3){\line(1,0){6}}
\put(6,1.5){\line(1,0){4}}
\put(8,0){\line(1,0){2}}
\put(10,7.5){\line(0,-1){7.5}}
\put(8,7.5){\line(0,-1){7.5}}
\put(6,7.5){\line(0,-1){6}}
\put(4,7.5){\line(0,-1){4.5}}
\put(2,7.5){\line(0,-1){3}}
\put(0,7.5){\line(0,-1){1.5}}

\put(0.5,6.4){\circle*{0.6}}
\put(1.5,6.4){\circle*{0.6}}
\put(2.5,7.1){\circle{0.6}}
\put(3.5,7.1){\circle{0.6}}
\put(2.5,6.4){\circle{0.6}}
\put(3.5,6.4){\circle{0.6}}
\put(4.5,7.1){\circle{0.6}}
\put(5.5,7.1){\circle{0.6}}
\put(4.5,6.4){\circle{0.6}}
\put(5.5,6.4){\circle{0.6}}
\put(6.5,7.1){\circle*{0.6}}
\put(7.5,7.1){\circle*{0.6}}
\put(6.5,6.4){\circle{0.6}}
\put(7.5,6.4){\circle{0.6}}
\put(8.5,6.4){\circle*{0.6}}
\put(9.5,6.4){\circle*{0.6}}

\put(2.5,5.6){\circle*{0.6}}
\put(3.5,5.6){\circle*{0.6}}
\put(2.5,4.9){\circle*{0.6}}
\put(3.5,4.9){\circle*{0.6}}
\put(4.5,5.6){\circle*{0.6}}
\put(5.5,5.6){\circle*{0.6}}
\put(4.5,4.9){\circle*{0.6}}
\put(5.5,4.9){\circle*{0.6}}
\put(6.5,5.6){\circle{0.6}}
\put(7.5,5.6){\circle{0.6}}
\put(6.5,4.9){\circle*{0.6}}
\put(7.5,4.9){\circle*{0.6}}
\put(8.5,5.6){\circle*{0.6}}
\put(9.5,5.6){\circle*{0.6}}
\put(8.5,4.9){\circle{0.6}}
\put(9.5,4.9){\circle{0.6}}

\put(4.5,4.1){\circle*{0.6}}
\put(4.5,3.4){\circle*{0.6}}
\put(5.5,3.4){\circle*{0.6}}
\put(6.5,4.1){\circle*{0.6}}
\put(7.5,4.1){\circle*{0.6}}
\put(6.5,3.4){\circle*{0.6}}
\put(7.5,3.4){\circle*{0.6}}
\put(8.5,4.1){\circle*{0.6}}
\put(9.5,4.1){\circle*{0.6}}
\put(8.5,3.4){\circle*{0.6}}
\put(9.5,3.4){\circle*{0.6}}

\put(6.5,2.6){\circle*{0.6}}
\put(7.5,2.6){\circle*{0.6}}
\put(6.5,1.9){\circle*{0.6}}
\put(7.5,1.9){\circle*{0.6}}
\put(8.5,2.6){\circle*{0.6}}
\put(9.5,2.6){\circle*{0.6}}
\put(8.5,1.9){\circle*{0.6}}
\put(9.5,1.9){\circle*{0.6}}

\put(8.5,1.1){\circle{0.6}}
\put(9.5,1.1){\circle{0.6}}
\put(8.5,0.4){\circle*{0.6}}
\put(9.5,0.4){\circle*{0.6}}

\end{picture}
\end{subfigure}
\\
\begin{subfigure}[b]{0.01\textwidth}
\begin{picture}(0.5,8)
\put(0.5,4){$\supsetneq$}
\end{picture}
\end{subfigure}
\begin{subfigure}[b]{0.3\textwidth}
\begin{picture}(10,8)
\put(0,7.5){\line(1,0){10}}
\put(0,6){\line(1,0){10}}
\put(2,4.5){\line(1,0){8}}
\put(4,3){\line(1,0){6}}
\put(6,1.5){\line(1,0){4}}
\put(8,0){\line(1,0){2}}
\put(10,7.5){\line(0,-1){7.5}}
\put(8,7.5){\line(0,-1){7.5}}
\put(6,7.5){\line(0,-1){6}}
\put(4,7.5){\line(0,-1){4.5}}
\put(2,7.5){\line(0,-1){3}}
\put(0,7.5){\line(0,-1){1.5}}

\put(0.5,6.4){\circle*{0.6}}
\put(1.5,6.4){\circle*{0.6}}
\put(2.5,7.1){\circle{0.6}}
\put(3.5,7.1){\circle*{0.6}}
\put(2.5,6.4){\circle*{0.6}}
\put(3.5,6.4){\circle{0.6}}
\put(4.5,7.1){\circle{0.6}}
\put(5.5,7.1){\circle*{0.6}}
\put(4.5,6.4){\circle*{0.6}}
\put(5.5,6.4){\circle{0.6}}
\put(6.5,7.1){\circle*{0.6}}
\put(7.5,7.1){\circle*{0.6}}
\put(6.5,6.4){\circle{0.6}}
\put(7.5,6.4){\circle{0.6}}
\put(8.5,6.4){\circle*{0.6}}
\put(9.5,6.4){\circle*{0.6}}

\put(2.5,5.6){\circle*{0.6}}
\put(3.5,5.6){\circle*{0.6}}
\put(2.5,4.9){\circle*{0.6}}
\put(3.5,4.9){\circle*{0.6}}
\put(4.5,5.6){\circle*{0.6}}
\put(5.5,5.6){\circle*{0.6}}
\put(4.5,4.9){\circle*{0.6}}
\put(5.5,4.9){\circle*{0.6}}
\put(7.5,5.6){\circle{0.6}}
\put(6.5,4.9){\circle*{0.6}}
\put(7.5,4.9){\circle*{0.6}}
\put(8.5,5.6){\circle*{0.6}}
\put(9.5,5.6){\circle*{0.6}}
\put(8.5,4.9){\circle{0.6}}
\put(9.5,4.9){\circle{0.6}}

\put(4.5,4.1){\circle*{0.6}}
\put(4.5,3.4){\circle*{0.6}}
\put(5.5,3.4){\circle*{0.6}}
\put(6.5,4.1){\circle*{0.6}}
\put(7.5,4.1){\circle*{0.6}}
\put(6.5,3.4){\circle*{0.6}}
\put(7.5,3.4){\circle*{0.6}}
\put(8.5,4.1){\circle*{0.6}}
\put(9.5,4.1){\circle*{0.6}}
\put(8.5,3.4){\circle*{0.6}}
\put(9.5,3.4){\circle*{0.6}}

\put(6.5,2.6){\circle*{0.6}}
\put(7.5,2.6){\circle*{0.6}}
\put(6.5,1.9){\circle*{0.6}}
\put(7.5,1.9){\circle*{0.6}}
\put(8.5,2.6){\circle*{0.6}}
\put(9.5,2.6){\circle*{0.6}}
\put(8.5,1.9){\circle*{0.6}}
\put(9.5,1.9){\circle*{0.6}}

\put(8.5,1.1){\circle{0.6}}
\put(9.5,1.1){\circle{0.6}}
\put(8.5,0.4){\circle*{0.6}}
\put(9.5,0.4){\circle*{0.6}}

\end{picture}
\end{subfigure}
\begin{subfigure}[b]{0.01\textwidth}
\begin{picture}(0.5,8)
\put(0.5,4){$\supsetneq$}
\end{picture}
\end{subfigure}
\begin{subfigure}[b]{0.3\textwidth}
\begin{picture}(10,8)
\put(0,7.5){\line(1,0){10}}
\put(0,6){\line(1,0){10}}
\put(2,4.5){\line(1,0){8}}
\put(4,3){\line(1,0){6}}
\put(6,1.5){\line(1,0){4}}
\put(8,0){\line(1,0){2}}
\put(10,7.5){\line(0,-1){7.5}}
\put(8,7.5){\line(0,-1){7.5}}
\put(6,7.5){\line(0,-1){6}}
\put(4,7.5){\line(0,-1){4.5}}
\put(2,7.5){\line(0,-1){3}}
\put(0,7.5){\line(0,-1){1.5}}

\put(0.5,6.4){\circle*{0.6}}
\put(1.5,6.4){\circle*{0.6}}
\put(2.5,7.1){\circle*{0.6}}
\put(3.5,7.1){\circle*{0.6}}
\put(2.5,6.4){\circle*{0.6}}
\put(3.5,6.4){\circle*{0.6}}
\put(4.5,7.1){\circle*{0.6}}
\put(5.5,7.1){\circle*{0.6}}
\put(4.5,6.4){\circle*{0.6}}
\put(5.5,6.4){\circle*{0.6}}
\put(6.5,7.1){\circle*{0.6}}
\put(7.5,7.1){\circle*{0.6}}
\put(6.5,6.4){\circle{0.6}}
\put(7.5,6.4){\circle{0.6}}
\put(8.5,6.4){\circle*{0.6}}
\put(9.5,6.4){\circle*{0.6}}

\put(2.5,5.6){\circle*{0.6}}
\put(3.5,5.6){\circle*{0.6}}
\put(2.5,4.9){\circle*{0.6}}
\put(3.5,4.9){\circle*{0.6}}
\put(4.5,5.6){\circle*{0.6}}
\put(5.5,5.6){\circle*{0.6}}
\put(4.5,4.9){\circle*{0.6}}
\put(5.5,4.9){\circle*{0.6}}
\put(6.5,4.9){\circle*{0.6}}
\put(7.5,4.9){\circle*{0.6}}
\put(8.5,5.6){\circle*{0.6}}
\put(9.5,5.6){\circle*{0.6}}
\put(8.5,4.9){\circle{0.6}}
\put(9.5,4.9){\circle{0.6}}

\put(4.5,4.1){\circle*{0.6}}
\put(4.5,3.4){\circle*{0.6}}
\put(5.5,3.4){\circle*{0.6}}
\put(6.5,4.1){\circle*{0.6}}
\put(7.5,4.1){\circle*{0.6}}
\put(6.5,3.4){\circle*{0.6}}
\put(7.5,3.4){\circle*{0.6}}
\put(8.5,4.1){\circle*{0.6}}
\put(9.5,4.1){\circle*{0.6}}
\put(8.5,3.4){\circle*{0.6}}
\put(9.5,3.4){\circle*{0.6}}

\put(6.5,2.6){\circle*{0.6}}
\put(7.5,2.6){\circle*{0.6}}
\put(6.5,1.9){\circle*{0.6}}
\put(7.5,1.9){\circle*{0.6}}
\put(8.5,2.6){\circle*{0.6}}
\put(9.5,2.6){\circle*{0.6}}
\put(8.5,1.9){\circle*{0.6}}
\put(9.5,1.9){\circle*{0.6}}

\put(8.5,1.1){\circle{0.6}}
\put(9.5,1.1){\circle{0.6}}
\put(8.5,0.4){\circle*{0.6}}
\put(9.5,0.4){\circle*{0.6}}

\end{picture}
\end{subfigure}
\begin{subfigure}[b]{0.01\textwidth}
\begin{picture}(0.5,8)
\put(0.5,4){$\supsetneq$}
\end{picture}
\end{subfigure}
\begin{subfigure}[b]{0.3\textwidth}
\begin{picture}(10,8)
\put(0,7.5){\line(1,0){10}}
\put(0,6){\line(1,0){10}}
\put(2,4.5){\line(1,0){8}}
\put(4,3){\line(1,0){6}}
\put(6,1.5){\line(1,0){4}}
\put(8,0){\line(1,0){2}}
\put(10,7.5){\line(0,-1){7.5}}
\put(8,7.5){\line(0,-1){7.5}}
\put(6,7.5){\line(0,-1){6}}
\put(4,7.5){\line(0,-1){4.5}}
\put(2,7.5){\line(0,-1){3}}
\put(0,7.5){\line(0,-1){1.5}}

\put(0.5,6.4){\circle*{0.6}}
\put(1.5,6.4){\circle*{0.6}}
\put(2.5,7.1){\circle*{0.6}}
\put(3.5,7.1){\circle*{0.6}}
\put(2.5,6.4){\circle*{0.6}}
\put(3.5,6.4){\circle*{0.6}}
\put(4.5,7.1){\circle*{0.6}}
\put(5.5,7.1){\circle*{0.6}}
\put(4.5,6.4){\circle*{0.6}}
\put(5.5,6.4){\circle*{0.6}}
\put(6.5,7.1){\circle*{0.6}}
\put(7.5,7.1){\circle*{0.6}}
\put(7.5,6.4){\circle{0.6}}
\put(8.5,6.4){\circle*{0.6}}
\put(9.5,6.4){\circle*{0.6}}

\put(2.5,5.6){\circle*{0.6}}
\put(3.5,5.6){\circle*{0.6}}
\put(2.5,4.9){\circle*{0.6}}
\put(3.5,4.9){\circle*{0.6}}
\put(4.5,5.6){\circle*{0.6}}
\put(5.5,5.6){\circle*{0.6}}
\put(4.5,4.9){\circle*{0.6}}
\put(5.5,4.9){\circle*{0.6}}
\put(6.5,4.9){\circle*{0.6}}
\put(7.5,4.9){\circle*{0.6}}
\put(8.5,5.6){\circle*{0.6}}
\put(9.5,5.6){\circle*{0.6}}
\put(8.5,4.9){\circle{0.6}}
\put(9.5,4.9){\circle{0.6}}

\put(4.5,4.1){\circle*{0.6}}
\put(4.5,3.4){\circle*{0.6}}
\put(5.5,3.4){\circle*{0.6}}
\put(6.5,4.1){\circle*{0.6}}
\put(7.5,4.1){\circle*{0.6}}
\put(6.5,3.4){\circle*{0.6}}
\put(7.5,3.4){\circle*{0.6}}
\put(8.5,4.1){\circle*{0.6}}
\put(9.5,4.1){\circle*{0.6}}
\put(8.5,3.4){\circle*{0.6}}
\put(9.5,3.4){\circle*{0.6}}

\put(6.5,2.6){\circle*{0.6}}
\put(7.5,2.6){\circle*{0.6}}
\put(6.5,1.9){\circle*{0.6}}
\put(7.5,1.9){\circle*{0.6}}
\put(8.5,2.6){\circle*{0.6}}
\put(9.5,2.6){\circle*{0.6}}
\put(8.5,1.9){\circle*{0.6}}
\put(9.5,1.9){\circle*{0.6}}

\put(8.5,1.1){\circle{0.6}}
\put(9.5,1.1){\circle{0.6}}
\put(8.5,0.4){\circle*{0.6}}
\put(9.5,0.4){\circle*{0.6}}

\end{picture}
\end{subfigure}
\\
\begin{subfigure}[b]{0.01\textwidth}
\begin{picture}(0.5,8)
\put(0.5,4){$\supsetneq$}
\end{picture}
\end{subfigure}
\begin{subfigure}[b]{0.3\textwidth}
\begin{picture}(10,8)
\put(0,7.5){\line(1,0){10}}
\put(0,6){\line(1,0){10}}
\put(2,4.5){\line(1,0){8}}
\put(4,3){\line(1,0){6}}
\put(6,1.5){\line(1,0){4}}
\put(8,0){\line(1,0){2}}
\put(10,7.5){\line(0,-1){7.5}}
\put(8,7.5){\line(0,-1){7.5}}
\put(6,7.5){\line(0,-1){6}}
\put(4,7.5){\line(0,-1){4.5}}
\put(2,7.5){\line(0,-1){3}}
\put(0,7.5){\line(0,-1){1.5}}

\put(0.5,6.4){\circle*{0.6}}
\put(1.5,6.4){\circle*{0.6}}
\put(2.5,7.1){\circle*{0.6}}
\put(3.5,7.1){\circle*{0.6}}
\put(2.5,6.4){\circle*{0.6}}
\put(3.5,6.4){\circle*{0.6}}
\put(4.5,7.1){\circle*{0.6}}
\put(5.5,7.1){\circle*{0.6}}
\put(4.5,6.4){\circle*{0.6}}
\put(5.5,6.4){\circle*{0.6}}
\put(6.5,7.1){\circle*{0.6}}
\put(7.5,7.1){\circle*{0.6}}
\put(8.5,6.4){\circle*{0.6}}
\put(9.5,6.4){\circle*{0.6}}

\put(2.5,5.6){\circle*{0.6}}
\put(3.5,5.6){\circle*{0.6}}
\put(2.5,4.9){\circle*{0.6}}
\put(3.5,4.9){\circle*{0.6}}
\put(4.5,5.6){\circle*{0.6}}
\put(5.5,5.6){\circle*{0.6}}
\put(4.5,4.9){\circle*{0.6}}
\put(5.5,4.9){\circle*{0.6}}
\put(6.5,4.9){\circle*{0.6}}
\put(7.5,4.9){\circle*{0.6}}
\put(8.5,5.6){\circle*{0.6}}
\put(9.5,5.6){\circle*{0.6}}
\put(8.5,4.9){\circle{0.6}}
\put(9.5,4.9){\circle{0.6}}

\put(4.5,4.1){\circle*{0.6}}
\put(4.5,3.4){\circle*{0.6}}
\put(5.5,3.4){\circle*{0.6}}
\put(6.5,4.1){\circle*{0.6}}
\put(7.5,4.1){\circle*{0.6}}
\put(6.5,3.4){\circle*{0.6}}
\put(7.5,3.4){\circle*{0.6}}
\put(8.5,4.1){\circle*{0.6}}
\put(9.5,4.1){\circle*{0.6}}
\put(8.5,3.4){\circle*{0.6}}
\put(9.5,3.4){\circle*{0.6}}

\put(6.5,2.6){\circle*{0.6}}
\put(7.5,2.6){\circle*{0.6}}
\put(6.5,1.9){\circle*{0.6}}
\put(7.5,1.9){\circle*{0.6}}
\put(8.5,2.6){\circle*{0.6}}
\put(9.5,2.6){\circle*{0.6}}
\put(8.5,1.9){\circle*{0.6}}
\put(9.5,1.9){\circle*{0.6}}

\put(8.5,1.1){\circle{0.6}}
\put(9.5,1.1){\circle{0.6}}
\put(8.5,0.4){\circle*{0.6}}
\put(9.5,0.4){\circle*{0.6}}

\end{picture}
\end{subfigure}
\begin{subfigure}[b]{0.01\textwidth}
\begin{picture}(0.5,8)
\put(0.5,4){$\supsetneq$}
\end{picture}
\end{subfigure}
\begin{subfigure}[b]{0.3\textwidth}
\begin{picture}(10,8)
\put(0,7.5){\line(1,0){10}}
\put(0,6){\line(1,0){10}}
\put(2,4.5){\line(1,0){8}}
\put(4,3){\line(1,0){6}}
\put(6,1.5){\line(1,0){4}}
\put(8,0){\line(1,0){2}}
\put(10,7.5){\line(0,-1){7.5}}
\put(8,7.5){\line(0,-1){7.5}}
\put(6,7.5){\line(0,-1){6}}
\put(4,7.5){\line(0,-1){4.5}}
\put(2,7.5){\line(0,-1){3}}
\put(0,7.5){\line(0,-1){1.5}}

\put(0.5,6.4){\circle*{0.6}}
\put(1.5,6.4){\circle*{0.6}}
\put(2.5,7.1){\circle*{0.6}}
\put(3.5,7.1){\circle*{0.6}}
\put(2.5,6.4){\circle*{0.6}}
\put(3.5,6.4){\circle*{0.6}}
\put(4.5,7.1){\circle*{0.6}}
\put(5.5,7.1){\circle*{0.6}}
\put(4.5,6.4){\circle*{0.6}}
\put(5.5,6.4){\circle*{0.6}}
\put(6.5,7.1){\circle*{0.6}}
\put(7.5,7.1){\circle*{0.6}}
\put(8.5,6.4){\circle*{0.6}}
\put(9.5,6.4){\circle*{0.6}}

\put(2.5,5.6){\circle*{0.6}}
\put(3.5,5.6){\circle*{0.6}}
\put(2.5,4.9){\circle*{0.6}}
\put(3.5,4.9){\circle*{0.6}}
\put(4.5,5.6){\circle*{0.6}}
\put(5.5,5.6){\circle*{0.6}}
\put(4.5,4.9){\circle*{0.6}}
\put(5.5,4.9){\circle*{0.6}}
\put(6.5,4.9){\circle*{0.6}}
\put(7.5,4.9){\circle*{0.6}}
\put(8.5,5.6){\circle*{0.6}}
\put(9.5,5.6){\circle*{0.6}}
\put(9.5,4.9){\circle{0.6}}

\put(4.5,4.1){\circle*{0.6}}
\put(4.5,3.4){\circle*{0.6}}
\put(5.5,3.4){\circle*{0.6}}
\put(6.5,4.1){\circle*{0.6}}
\put(7.5,4.1){\circle*{0.6}}
\put(6.5,3.4){\circle*{0.6}}
\put(7.5,3.4){\circle*{0.6}}
\put(8.5,4.1){\circle*{0.6}}
\put(9.5,4.1){\circle*{0.6}}
\put(8.5,3.4){\circle*{0.6}}
\put(9.5,3.4){\circle*{0.6}}

\put(6.5,2.6){\circle*{0.6}}
\put(7.5,2.6){\circle*{0.6}}
\put(6.5,1.9){\circle*{0.6}}
\put(7.5,1.9){\circle*{0.6}}
\put(8.5,2.6){\circle*{0.6}}
\put(9.5,2.6){\circle*{0.6}}
\put(8.5,1.9){\circle*{0.6}}
\put(9.5,1.9){\circle*{0.6}}

\put(8.5,1.1){\circle{0.6}}
\put(9.5,1.1){\circle{0.6}}
\put(8.5,0.4){\circle*{0.6}}
\put(9.5,0.4){\circle*{0.6}}

\end{picture}
\end{subfigure}
\begin{subfigure}[b]{0.01\textwidth}
\begin{picture}(0.5,8)
\put(0.5,4){$\supsetneq$}
\end{picture}
\end{subfigure}
\begin{subfigure}[b]{0.3\textwidth}
\begin{picture}(10,8)
\put(0,7.5){\line(1,0){10}}
\put(0,6){\line(1,0){10}}
\put(2,4.5){\line(1,0){8}}
\put(4,3){\line(1,0){6}}
\put(6,1.5){\line(1,0){4}}
\put(8,0){\line(1,0){2}}
\put(10,7.5){\line(0,-1){7.5}}
\put(8,7.5){\line(0,-1){7.5}}
\put(6,7.5){\line(0,-1){6}}
\put(4,7.5){\line(0,-1){4.5}}
\put(2,7.5){\line(0,-1){3}}
\put(0,7.5){\line(0,-1){1.5}}

\put(0.5,6.4){\circle*{0.6}}
\put(1.5,6.4){\circle*{0.6}}
\put(2.5,7.1){\circle*{0.6}}
\put(3.5,7.1){\circle*{0.6}}
\put(2.5,6.4){\circle*{0.6}}
\put(3.5,6.4){\circle*{0.6}}
\put(4.5,7.1){\circle*{0.6}}
\put(5.5,7.1){\circle*{0.6}}
\put(4.5,6.4){\circle*{0.6}}
\put(5.5,6.4){\circle*{0.6}}
\put(6.5,7.1){\circle*{0.6}}
\put(7.5,7.1){\circle*{0.6}}
\put(8.5,6.4){\circle*{0.6}}
\put(9.5,6.4){\circle*{0.6}}

\put(2.5,5.6){\circle*{0.6}}
\put(3.5,5.6){\circle*{0.6}}
\put(2.5,4.9){\circle*{0.6}}
\put(3.5,4.9){\circle*{0.6}}
\put(4.5,5.6){\circle*{0.6}}
\put(5.5,5.6){\circle*{0.6}}
\put(4.5,4.9){\circle*{0.6}}
\put(5.5,4.9){\circle*{0.6}}
\put(6.5,4.9){\circle*{0.6}}
\put(7.5,4.9){\circle*{0.6}}
\put(8.5,5.6){\circle*{0.6}}
\put(9.5,5.6){\circle*{0.6}}

\put(4.5,4.1){\circle*{0.6}}
\put(4.5,3.4){\circle*{0.6}}
\put(5.5,3.4){\circle*{0.6}}
\put(6.5,4.1){\circle*{0.6}}
\put(7.5,4.1){\circle*{0.6}}
\put(6.5,3.4){\circle*{0.6}}
\put(7.5,3.4){\circle*{0.6}}
\put(8.5,4.1){\circle*{0.6}}
\put(9.5,4.1){\circle*{0.6}}
\put(8.5,3.4){\circle*{0.6}}
\put(9.5,3.4){\circle*{0.6}}

\put(6.5,2.6){\circle*{0.6}}
\put(7.5,2.6){\circle*{0.6}}
\put(6.5,1.9){\circle*{0.6}}
\put(7.5,1.9){\circle*{0.6}}
\put(8.5,2.6){\circle*{0.6}}
\put(9.5,2.6){\circle*{0.6}}
\put(8.5,1.9){\circle*{0.6}}
\put(9.5,1.9){\circle*{0.6}}

\put(8.5,1.1){\circle{0.6}}
\put(9.5,1.1){\circle{0.6}}
\put(8.5,0.4){\circle*{0.6}}
\put(9.5,0.4){\circle*{0.6}}

\end{picture}
\end{subfigure}
\\
\begin{subfigure}[b]{0.01\textwidth}
\begin{picture}(0.5,8)
\put(0.5,4){$\supsetneq$}
\end{picture}
\end{subfigure}
\begin{subfigure}[b]{0.3\textwidth}
\begin{picture}(10,8)
\put(0,7.5){\line(1,0){10}}
\put(0,6){\line(1,0){10}}
\put(2,4.5){\line(1,0){8}}
\put(4,3){\line(1,0){6}}
\put(6,1.5){\line(1,0){4}}
\put(8,0){\line(1,0){2}}
\put(10,7.5){\line(0,-1){7.5}}
\put(8,7.5){\line(0,-1){7.5}}
\put(6,7.5){\line(0,-1){6}}
\put(4,7.5){\line(0,-1){4.5}}
\put(2,7.5){\line(0,-1){3}}
\put(0,7.5){\line(0,-1){1.5}}

\put(0.5,6.4){\circle*{0.6}}
\put(1.5,6.4){\circle*{0.6}}
\put(2.5,7.1){\circle*{0.6}}
\put(3.5,7.1){\circle*{0.6}}
\put(2.5,6.4){\circle*{0.6}}
\put(3.5,6.4){\circle*{0.6}}
\put(4.5,7.1){\circle*{0.6}}
\put(5.5,7.1){\circle*{0.6}}
\put(4.5,6.4){\circle*{0.6}}
\put(5.5,6.4){\circle*{0.6}}
\put(6.5,7.1){\circle*{0.6}}
\put(7.5,7.1){\circle*{0.6}}
\put(8.5,6.4){\circle*{0.6}}
\put(9.5,6.4){\circle*{0.6}}

\put(2.5,5.6){\circle*{0.6}}
\put(3.5,5.6){\circle*{0.6}}
\put(2.5,4.9){\circle*{0.6}}
\put(3.5,4.9){\circle*{0.6}}
\put(4.5,5.6){\circle*{0.6}}
\put(5.5,5.6){\circle*{0.6}}
\put(4.5,4.9){\circle*{0.6}}
\put(5.5,4.9){\circle*{0.6}}
\put(6.5,4.9){\circle*{0.6}}
\put(7.5,4.9){\circle*{0.6}}
\put(8.5,5.6){\circle*{0.6}}
\put(9.5,5.6){\circle*{0.6}}

\put(4.5,4.1){\circle*{0.6}}
\put(4.5,3.4){\circle*{0.6}}
\put(5.5,3.4){\circle*{0.6}}
\put(6.5,4.1){\circle*{0.6}}
\put(7.5,4.1){\circle*{0.6}}
\put(6.5,3.4){\circle*{0.6}}
\put(7.5,3.4){\circle*{0.6}}
\put(8.5,4.1){\circle*{0.6}}
\put(9.5,4.1){\circle*{0.6}}
\put(8.5,3.4){\circle*{0.6}}
\put(9.5,3.4){\circle*{0.6}}

\put(6.5,2.6){\circle*{0.6}}
\put(7.5,2.6){\circle*{0.6}}
\put(6.5,1.9){\circle*{0.6}}
\put(7.5,1.9){\circle*{0.6}}
\put(8.5,2.6){\circle*{0.6}}
\put(9.5,2.6){\circle*{0.6}}
\put(8.5,1.9){\circle*{0.6}}
\put(9.5,1.9){\circle*{0.6}}

\put(9.5,1.1){\circle{0.6}}
\put(8.5,0.4){\circle*{0.6}}
\put(9.5,0.4){\circle*{0.6}}

\end{picture}
\end{subfigure}
\begin{subfigure}[b]{0.01\textwidth}
\begin{picture}(0.5,8)
\put(0.5,4){$\supsetneq$}
\end{picture}
\end{subfigure}
\begin{subfigure}[b]{0.3\textwidth}
\begin{picture}(10,8)
\put(0,7.5){\line(1,0){10}}
\put(0,6){\line(1,0){10}}
\put(2,4.5){\line(1,0){8}}
\put(4,3){\line(1,0){6}}
\put(6,1.5){\line(1,0){4}}
\put(8,0){\line(1,0){2}}
\put(10,7.5){\line(0,-1){7.5}}
\put(8,7.5){\line(0,-1){7.5}}
\put(6,7.5){\line(0,-1){6}}
\put(4,7.5){\line(0,-1){4.5}}
\put(2,7.5){\line(0,-1){3}}
\put(0,7.5){\line(0,-1){1.5}}

\put(0.5,6.4){\circle*{0.6}}
\put(1.5,6.4){\circle*{0.6}}
\put(2.5,7.1){\circle*{0.6}}
\put(3.5,7.1){\circle*{0.6}}
\put(2.5,6.4){\circle*{0.6}}
\put(3.5,6.4){\circle*{0.6}}
\put(4.5,7.1){\circle*{0.6}}
\put(5.5,7.1){\circle*{0.6}}
\put(4.5,6.4){\circle*{0.6}}
\put(5.5,6.4){\circle*{0.6}}
\put(6.5,7.1){\circle*{0.6}}
\put(7.5,7.1){\circle*{0.6}}
\put(8.5,6.4){\circle*{0.6}}
\put(9.5,6.4){\circle*{0.6}}

\put(2.5,5.6){\circle*{0.6}}
\put(3.5,5.6){\circle*{0.6}}
\put(2.5,4.9){\circle*{0.6}}
\put(3.5,4.9){\circle*{0.6}}
\put(4.5,5.6){\circle*{0.6}}
\put(5.5,5.6){\circle*{0.6}}
\put(4.5,4.9){\circle*{0.6}}
\put(5.5,4.9){\circle*{0.6}}
\put(6.5,4.9){\circle*{0.6}}
\put(7.5,4.9){\circle*{0.6}}
\put(8.5,5.6){\circle*{0.6}}
\put(9.5,5.6){\circle*{0.6}}

\put(4.5,4.1){\circle*{0.6}}
\put(4.5,3.4){\circle*{0.6}}
\put(5.5,3.4){\circle*{0.6}}
\put(6.5,4.1){\circle*{0.6}}
\put(7.5,4.1){\circle*{0.6}}
\put(6.5,3.4){\circle*{0.6}}
\put(7.5,3.4){\circle*{0.6}}
\put(8.5,4.1){\circle*{0.6}}
\put(9.5,4.1){\circle*{0.6}}
\put(8.5,3.4){\circle*{0.6}}
\put(9.5,3.4){\circle*{0.6}}

\put(6.5,2.6){\circle*{0.6}}
\put(7.5,2.6){\circle*{0.6}}
\put(6.5,1.9){\circle*{0.6}}
\put(7.5,1.9){\circle*{0.6}}
\put(8.5,2.6){\circle*{0.6}}
\put(9.5,2.6){\circle*{0.6}}
\put(8.5,1.9){\circle*{0.6}}
\put(9.5,1.9){\circle*{0.6}}

\put(8.5,0.4){\circle*{0.6}}
\put(9.5,0.4){\circle*{0.6}}

\end{picture}
\end{subfigure}
\caption{The change of hyperplanes which can be removed along a free filtration from $\An{A}$ to $\tilde{\An{A}}$.}%
\label{Fig_Rem_M31}
\end{figure}

\end{example}

Now we can prove Proposition \ref{prop:G31_NIF}:

\begin{proof}[Proof of Proposition \ref{prop:G31_NIF}]
Let $\tilde{\A}$ be a free filtration subarrangement.

If $\A(\crg{29}) \nsubseteq \tilde{\A}$, then with Lemma \ref{lem:FFSA_G31}, $\Betrag{\tilde{\A}} \geq 47$.

Now assume that $\tilde{\A} \cong \A(\crg{29})$.
In Lemma \ref{lem:G29_G31_arbt_desc} we saw, that $\tilde{\A}$ is a free filtration subarrangement.

In \cite[Remark~2.17]{2012arXiv1208.3131H} it is shown 
that one cannot remove a single hyperplane from $\A(\crg{29}) = \An{B}$ resulting in a free arrangement $\An{B}'$,
so there is no smaller free filtration subarrangement of $\A$. 
\end{proof}

\subsubsection{The reflection arrangements $\A(\crg{29})$ and $\A(\crg{31})$ are not recursivley free}
\label{subsubsec:A29_A31_nRF_e}

Let $\A := \A(W)$ be the reflection arrangement of the complex reflection group $W = \crg{31}$ and
$\An{B} := \A(W)$ the reflection arrangement of the complex reflection group $W = \crg{29}$.
As we saw in the previous section $\B \subsetneq \A$ is a \FFSAp.

We use the characterization of all free filtration subarrangements $\tilde{\A} \subseteq \A$ 
from Lemma \ref{lem:FFSA_G31}
and show that for all these subarrangements, there exists no hyperplane $H$ outside of $\A$ we can add to $\tilde{\A}$,
such that the resulting arrangement $\tilde{\A}\dot{\cup} \{H\}$ is free.

First we show, that it is not possible for $\tilde{\A} = \A$:

\begin{lemma}\label{lem:Add_H_A31}
There is no way to add a new hyperplane $H$ to $\A$ such that the arrangement
$\tilde{\A} := \A \dot{\cup} \{ H \}$ is free.
\end{lemma}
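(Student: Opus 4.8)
The plan is to argue by contradiction, feeding the numerical constraint of Lemma~\ref{lem:A_u_H} into a study of the collinearities that a putative new hyperplane would be forced to create. So suppose there were a hyperplane $H \notin \A$ with $\tilde{\A} := \A \dot{\cup} \{H\}$ free. Since $\A = \tilde{\A} \setminus \{H\}$ is free and irreducible with $\expAA{\A} = \{\{1,13,17,29\}\}$ (so $\Betrag{\A} = 60$), Lemma~\ref{lem:A_u_H} applied to the pair $\tilde{\A}, \A$ yields
\[
s := \sum_{X \in P_H}\bigl(\Betrag{\A_X} - 1\bigr) \in \expAA{\A}, \qquad s \neq 1,
\]
where $P_H = \{X \in \LAq{\A}{2} \mid X \subseteq H\}$; hence $s \in \{13,17,29\}$. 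Equivalently, by Theorem~\ref{thm:A_AoH_exp} the restriction $\tilde{\A}^H$ is free with $\expAA{\tilde{\A}^H} = \{\{1,13,17,29\}\} \setminus \{\{s\}\}$ and $\Betrag{\tilde{\A}^H} = 60 - s \in \{47,43,31\}$.

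Next I would translate $P_H$ into incidence geometry. Writing each $K \in \A$ as $\Kern{\alpha_K}$ and $H = \Kern{\alpha_H}$, the forms $[\alpha_K]$ give a $60$-point configuration in $\mathbb{P}(V^*) = \mathbb{P}^3$, and $P := [\alpha_H]$ is a point \emph{off} this configuration. A rank-$2$ flat $X = \Kern{\alpha_K} \cap \Kern{\alpha_{K'}}$ satisfies $X \subseteq H$ exactly when $\alpha_H \in \operatorname{span}(\alpha_K,\alpha_{K'})$, i.e.\ when the secant line $\overline{[\alpha_K]\,[\alpha_{K'}]}$ passes through $P$. Thus $P_H$ is the set of secant lines of the configuration through $P$, and $\Betrag{\A_X}$ is the number of configuration points such a line carries. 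By Lemma~\ref{lem:Part_A_X} every rank-$2$ flat of $\A$ has $\Betrag{\A_X} \in \{2,3,6\}$, so writing $t_6,t_3,t_2$ for the numbers of secants through $P$ carrying $6,3,2$ points, one gets
\[
s = 5t_6 + 2t_3 + t_2, \qquad 6t_6 + 3t_3 + 2t_2 \le 60,
\]
the inequality because two distinct secants through $P$ meet only at $P \notin \A$, so their point sets are disjoint.

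The remaining and hardest step is to show that no point $P$ off the configuration can force $s \ge 13$. Here I would exploit the rigid combinatorics of $\A(\crg{31})$ from Lemmas~\ref{lem:Part_AG31} and~\ref{lem:Part_A_X}: the $60$ hyperplanes fall into the $15$ four-element blocks of the partition $\F$, on which $\crg{31}$ acts transitively, and the $6$-point flats (localizations $\cong \A(\crgM{4}{2}{2})$) and $3$-point flats (localizations $\cong \A(A_2)$) are distributed across these blocks in the controlled manner recorded there. The plan is to bound the number $t_6$ and $t_3$ of high-multiplicity secants meeting a common external point $P$ — each such secant is a full $\crgM{4}{2}{2}$- or $A_2$-subsystem, and the block structure, together with the disjointness just noted, severely limits how many of them can share a point outside $\A$ — and thereby conclude $s \le 12$, contradicting $s \in \{13,17,29\}$.

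This collinearity bookkeeping is the main obstacle, and I would keep a fallback in reserve: should some value $s \in \{13,17,29\}$ survive the counting, one can instead test the forced restriction directly, since by Theorem~\ref{thm:A_free_factZ} the freeness of $\tilde{\A}^H$ requires $\CharPolyA{\tilde{\A}^H} = (t-1)(t-d_1)(t-d_2)$ with $\{d_1,d_2\} = \expAA{\tilde{\A}^H}\setminus\{1\}$, i.e.\ the coefficient of $t$, namely $\sum_{Y \in \LAq{\tilde{\A}^H}{2}}(\Betrag{(\tilde{\A}^H)_Y}-1)$, must equal $d_1d_2 + d_1 + d_2$; the large exponent $29$ (respectively $17$) appearing in two of the three cases is then incompatible with the multiplicities actually available in the configuration. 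Either way the assumption that $\tilde{\A}$ is free is untenable, which proves the lemma.
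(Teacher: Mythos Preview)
Your setup through Lemma~\ref{lem:A_u_H} is exactly what the paper does: from $\expAA{\A}=\{\{1,13,17,29\}\}$ and the localization sizes $\Betrag{\A_X}\in\{2,3,6\}$ one obtains $s=\sum_{X\in P_H}(\Betrag{\A_X}-1)\in\{13,17,29\}$ and hence $\Betrag{P_H}\ge 4$. The divergence comes at the next step, and there your argument has a genuine gap.

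You propose to deduce $s\le 12$ from the block structure of Lemmas~\ref{lem:Part_AG31} and~\ref{lem:Part_A_X}, but those lemmas describe how the localizations $\A_X$ sit relative to a hyperplane \emph{in} $\A$; they give no direct control over which rank-$2$ flats can simultaneously lie on an \emph{external} hyperplane. You acknowledge this bookkeeping is ``the main obstacle'' and do not carry it out. More to the point, the paper's own computation shows the bound you aim for is not how the argument concludes: there \emph{are} external hyperplanes $H$ with $\Betrag{P_H}\ge 3$ for which $\CharPolyA{\tilde{\A}}$ factors over $\Ganz$---five of them up to the $\crg{31}$-symmetry---so an abstract collinearity bound of the kind you sketch would have to thread a rather fine needle, and you give no mechanism for doing so.

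What the paper actually does at this point is different from both your main plan and your fallback. It enumerates (up to symmetry) the finitely many candidate $H$ with $\Betrag{P_H}\ge 3$ and integer-factoring $\CharPolyA{\tilde{\A}}$, and computes that in every such case $\CharPolyA{\tilde{\A}}=(t-1)(t-15)(t-16)(t-29)$. By Theorem~\ref{thm:A_free_factZ} this forces $\expAA{\tilde{\A}}=\{\{1,15,16,29\}\}$ if $\tilde{\A}$ were free, which is incompatible with Theorem~\ref{thm:A_AoH_exp} (the exponents of $\A$ and $\tilde{\A}$ must differ by $1$ in a single entry). Note that the check is on $\CharPolyA{\tilde{\A}}$, not on $\CharPolyA{\tilde{\A}^H}$ as in your fallback; the former is what the enumeration naturally produces, and it suffices.
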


\begin{proof}
The exponents of $\A$ are $\expA{\A}{1,13,17,29}$.
Inspection of the intersection lattice $L:=\LA{\A}$ gives the following multisets of invariants:
\begin{equation}
\{\{ \Betrag{\A_X} \mid X \in L_2 \}\} = \{\{2^{360}, 3^{320}, 6^{30} \}\} \label{eq_lem:Add_H_A31}.
\end{equation}
Now assume that there exists a new hyperplane $H$ which we can add to $\A$ such that
$\tilde{\A} := \A \dot{\cup} \{ H \}$ is free.
Then by Lemma \ref{lem:A_u_H} we have $\sum_{X \in P_H} (\Betrag{\A_X}-1) \in \expAA{\A}$
where $P_H = \{ X \in L_2 \mid X \subseteq H \}$.
Hence with (\ref{eq_lem:Add_H_A31}) $H$ contains at least $4$ different rank $2$ subspaces
 (e.g.\ $13 = (6-1) + (6-1) + (3-1) +(2-1)$) from the intersection lattice.

But up to symmetry there are no more than $5$ possibilities to get a hyperplane $H$ with 
$\Betrag{ \{ X \in L_2 \mid X \subseteq H \} } \geq 3$ such that $\CharPolyA{\tilde{\A}}$ factors over the integers, 
but in each case $\CharPolyA{\tilde{\A}} = (t-1)(t-15)(t-16)(t-29)$, so with Theorem \ref{thm:A_AoH_exp} 
$\tilde{\A}$ can not be free. 
\end{proof}

Now we will prove that for all \FFSAs $\tilde{\A} \subseteq \A$ 
(see definition \ref{def:FFSA})
there exists no other hyperplane $H \notin \A$ we can add to $\tilde{\A}$ such that $\tilde{\A}\dot{\cup}\{H\}$ is
free.

\begin{lemma}\label{lem:Add_H_FFSA}
Let $\tilde{\A} \subseteq \A$ be a \FFSAp.
Let $H$ be a new hyperplane such that $\tilde{\A} \dot{\cup} \{ H \}$ is free.
Then $H \in \A$.
\end{lemma}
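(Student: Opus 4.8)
The plan is to argue by contradiction. Suppose $H \notin \A$ but $\tilde{\A} \dot{\cup} \{H\}$ is free, and write $\An{N} := \A \setminus \tilde{\A}$. By Lemma \ref{lem:FFSA_G31} we have $\expAA{\tilde{\A}} = \{\{1,13,17,29-\Betrag{\An{N}}\}\}$ with $\Betrag{\An{N}} \leq 20$, so $29-\Betrag{\An{N}} \geq 9$. Applying Lemma \ref{lem:A_u_H} to the free pair $\tilde{\A} \subseteq \tilde{\A}\dot{\cup}\{H\}$ and setting $P_H := \{X \in \LAq{\tilde{\A}}{2} \mid X \subseteq H\}$, the quantity $s := \sum_{X \in P_H}(\Betrag{\tilde{\A}_X}-1)$ is an exponent of $\tilde{\A}$; moreover $\tilde{\A}$ is irreducible (being a large essential subarrangement of the irreducible $\A$), so $s \neq 1$ and hence $s \in \{13,17,29-\Betrag{\An{N}}\}$, in particular $s \geq 9$. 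By Theorem \ref{thm:A_AoH_exp} and Theorem \ref{thm:Addition_Deletion}, $s$ is exactly the exponent of $\tilde{\A}$ that is raised by one when $H$ is added.

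The second step turns this into a finite geometric problem. Each $X \in P_H$ satisfies $\tilde{\A}_X \subseteq \A_X$ with $\Betrag{\tilde{\A}_X} \geq 2$, so $X$ is a rank-$2$ flat of $\A$, and by Lemma \ref{lem:Part_A_X} (see also (\ref{eq_lem:Add_H_A31})) we have $\Betrag{\A_X} \in \{2,3,6\}$; thus each summand $\Betrag{\tilde{\A}_X}-1$ is at most $5$. Since $s \geq 9 > 5$, the hyperplane $H$ contains at least two distinct rank-$2$ flats $X_1, X_2$ of $\A$. Both lie in $H$, so $X_1 + X_2 \subseteq H$; as $\dim{X_1} = \dim{X_2} = 2$ this forces $\dim{X_1 \cap X_2} \geq 1$ and $H = X_1 + X_2$. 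Consequently $H$ is determined by a pair of coplanar rank-$2$ flats of $\A$, and there are only finitely many such hyperplanes. Using the transitivity of $\crg{31}$ on $\A$ and on the blocks of the partition $\F$ from Lemma \ref{lem:Part_AG31} (together with the distribution in Lemma \ref{lem:Part_A_X}), these reduce to finitely many candidates up to symmetry, essentially the same finite list that underlies Lemma \ref{lem:Add_H_A31}.

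For each candidate $H$ I would then compare two computations of the characteristic polynomial. On the one hand, if $\tilde{\A}\dot{\cup}\{H\}$ were free, Theorem \ref{thm:A_free_factZ} forces $\CharPolyA{\tilde{\A}\dot{\cup}\{H\}}$ to equal $(t-1)(t-13)(t-17)(t-(29-\Betrag{\An{N}}))$ with the factor $(t-s)$ replaced by $(t-s-1)$. On the other hand, the deletion--restriction formula for characteristic polynomials gives $\CharPolyA{\tilde{\A}\dot{\cup}\{H\}} = \CharPolyA{\tilde{\A}} - \CharPolyA{(\tilde{\A}\dot{\cup}\{H\})^H}$, and the right-hand side is computed directly from the flats of $\A$ lying on $H$ together with their multiplicities $\Betrag{\tilde{\A}_X} = \Betrag{\A_X} - \Betrag{\An{N}\cap\A_X}$. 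Showing that the two expressions never coincide contradicts the Factorization theorem and forces $H \in \A$.

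The main obstacle is the dependence on $\An{N}$: unlike in Lemma \ref{lem:Add_H_A31}, where $\tilde{\A} = \A$ is fixed, here the multiplicities $\Betrag{\tilde{\A}_X}$ — and hence $s$, the restriction, and the candidate factorization — vary with the \FFSAp, so the non-freeness must be established uniformly over all admissible deletions. This is kept finite and manageable by Lemma \ref{lem:FFSA_G31}: only $\An{N} \cap \A_X$ for the finitely many flats $X$ on a given $H$ matters, and condition (\ref{eq:SubsetN_FFSA}) together with Lemma \ref{lem:Part_A_X} tightly restricts how $\An{N}$ can meet those flats. I expect the hardest sub-case to be $s = 29 - \Betrag{\An{N}}$: then $(\tilde{\A}\dot{\cup}\{H\})^H$ would have exponents $\{\{1,13,17\}\}$ and $31$ hyperplanes, numerically identical to a genuine restriction $\A^K$ with $K \in \A$, so it cannot be excluded on exponents alone and one must use the explicit incidence data on $H$ to show the resulting $3$-arrangement is in fact not free. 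Assembling these finitely many checks completes the proof.
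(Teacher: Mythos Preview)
Your opening is correct and matches the paper: the exponents $\{\{1,13,17,29-|\An{N}|\}\}$, Lemma~\ref{lem:A_u_H}, the bound $|\tilde{\A}_X|-1 \leq 5$ for each $X \in P_H$, and the observation that $H$ must be spanned by rank-$2$ flats of $\A$. The divergence is in what comes next.

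You propose to enumerate the candidate hyperplanes $H = X_1 + X_2$ and, for each, compare the characteristic polynomial of $\tilde{\A}\dot{\cup}\{H\}$ against the factorization forced by Theorem~\ref{thm:A_free_factZ}. As you yourself note, this depends on $\An{N}$ through the multiplicities $|\tilde{\A}_X| = |\A_X| - |\An{N} \cap \A_X|$, and in the case $s = 29 - |\An{N}|$ the restriction would have the \emph{same} numerical invariants as a genuine $\A^K$, so it cannot be excluded on exponents or characteristic polynomials alone. What you have written is therefore a plan for a finite but unperformed case analysis whose hardest branch is left open; this is a genuine gap rather than a complete argument.

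The paper's proof avoids all of this. Instead of analyzing $\tilde{\A}\dot{\cup}\{H\}$ for varying $\An{N}$, it works entirely with the incidence geometry of the \emph{fixed} arrangement $\A = \A(\crg{31})$. The decisive facts are: for distinct $X,Y \in \LAq{\A}{2}$ with $|\A_X| = |\A_Y| = 6$ one always has either $X+Y = V$ or $X+Y \in \A$; and the analogous dichotomy holds for any three coplanar rank-$2$ flats with $|\A_X| = 3$, and for any four with $|\A_X| = 2$. A short pigeonhole argument on $|P_H|$ and the possible summand values $|\tilde{\A}_X|-1 \leq |\A_X|-1 \in \{1,2,5\}$ shows that $P_H$ must contain such a configuration, whence the hyperplane they span already lies in $\A$. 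This argument is uniform in $\An{N}$, needs no characteristic-polynomial computation, and dispatches your ``hardest sub-case'' with no extra work.
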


\begin{proof}
In Lemma \ref{lem:FFSA_G31} we have shown, that $\tilde{\A}$ is free with exponents 
$\expA{\tilde{\A}}{1,13,17,29-n}$, $n \leq 20$.
Let $L =  \LA{\A}$ and $\tilde{L} = \LA{\tilde{\A}} \subseteq L$.
We once more use the following multiset of invariants:
\begin{equation*}
\{\{ \Betrag{\A_X} \mid X \in L_2 \}\} = \{\{2^{360}, 3^{320}, 6^{30} \}\}.
\end{equation*}
Thus for $X \in \tilde{L}_2$ we have $2 \leq \Betrag{\tilde{\A}_X} \leq 6$. 

Suppose we add a new hyperplane $H$ such that $\tilde{\A} \dot{\cup} \{H\}$ is free.
Then by Lemma \ref{lem:A_u_H} we have $\sum_{X \in P_H} (\Betrag{\tilde{\A}_X} - 1) \in \expAA{\tilde{\A}}$
where $P_H = \{X \in \tilde{L}_2 \mid X \subseteq H \}$.

We immediately see that $\Betrag{P_H} \geq 3$ and if  $\Betrag{P_H} \in \{3,4\}$ there must be at least
two $X \in P_H$ with $\Betrag{\tilde{\A}_X} \geq 4$ or $\Betrag{\A_X} = 6$.
But for $X,Y \in L_2$, $X \neq Y$, with $\Betrag{\A_X} = \Betrag{\A_Y} = 6$ we either have $X+Y = V$  or
$X \subseteq K$ and $Y \subseteq K$ for a $K \in \A$. Hence in this case $H \in \A$.

Now assume that $\Betrag{P_H} \geq 5$ and there is at most one $X \in P_H$ with $\Betrag{\tilde{\A}_X} \geq 4$ 
or $\Betrag{\A_X} = 6$.
Then there are either at least three $X \in P_H$ with $\Betrag{\tilde{\A}_X} = 3$ or
at least four $X \in P_H$ with $\Betrag{\tilde{\A}_X} = 2$.
But in both cases with the same argument as above we must have $H \in \A$.

This finishes the proof.
\end{proof}

We close this section with the following corollary which completes the proof of Theorem \ref{thm:G31_G29_nRF}.
\begin{corollary}\label{coro:A31_FFSA_nRF}
Let $\tilde{\A} \subseteq \A$ be a \FFSA of $\A = \A(\crg{31})$.
Then $\tilde{\A}$ is not recursively free and in particular $\A(\crg{31})$ and $\A(\crg{29})$ are not recursively free.
\end{corollary}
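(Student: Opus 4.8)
The plan is to show that the entire family of free filtration subarrangements of $\A(\crg{31})$ is disjoint from the class $\RFC$; since $\A(\crg{31})$ itself is a \FFSA via the trivial filtration and $\A(\crg{29})$ is one by Proposition \ref{prop:G31_NIF}, the corollary follows immediately. Write $\mathcal{K}$ for the set of all free filtration subarrangements $\tilde{\A} \subseteq \A := \A(\crg{31})$. By Lemma \ref{lem:FFSA_G31} every member of $\mathcal{K}$ is free, is a rank $4$ arrangement, and has at least $40$ hyperplanes; in particular no empty arrangement lies in $\mathcal{K}$.

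The key structural step is to prove that $\mathcal{K}$ is closed under passing to a \emph{free neighbour}, i.e.\ if $\tilde{\A} \in \mathcal{K}$ and $\B$ is a free arrangement with either $\B = \tilde{\A} \setminus \{H\}$ or $\B = \tilde{\A} \dot{\cup} \{H\}$ for a single hyperplane $H$, then $\B \in \mathcal{K}$. For the deletion direction this is immediate from Definition \ref{def:FFSA}: appending the free arrangement $\B = \tilde{\A} \setminus \{H\}$ to a free filtration from $\A$ to $\tilde{\A}$ yields a free filtration from $\A$ to $\B$. For the addition direction I would invoke Lemma \ref{lem:Add_H_FFSA} (together with its extreme case Lemma \ref{lem:Add_H_A31} for $\tilde{\A} = \A$), which forces the added hyperplane $H$ to lie in $\A$, so that $\B = \A \setminus \An{N}'$ with $\An{N}' \subsetneq \An{N}$ where $\tilde{\A} = \A \setminus \An{N}$. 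I would then verify that $\B$ again meets the criterion of Lemma \ref{lem:FFSA_G31}: if $\A(\crg{29}) \subseteq \tilde{\A}$ then a fortiori $\A(\crg{29}) \subseteq \B$; otherwise $\Betrag{\An{N}} \leq 13$ and $\An{N}$ satisfies $(\ast)$, whence $\Betrag{\An{N}'} \leq 13$ and $\An{N}'$ satisfies $(\ast)$ by the subcollection stability already exploited in Lemma \ref{lem:G29_G31_arbt_desc} (since $\LAq{\An{N}'}{2} \subseteq \LAq{\An{N}}{2}$ and $\tilde{\A} \subseteq \B$). Hence $\B \in \mathcal{K}$ in every case.

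With this closure property in hand I would run the standard minimal-class argument. Suppose, for contradiction, that $\mathcal{K} \cap \RFC \neq \emptyset$, and set $\mathcal{R}' := \RFC \setminus \mathcal{K}$. I claim $\mathcal{R}'$ still satisfies the three defining conditions of Definition \ref{def:RF}. Condition (1) holds because no empty arrangement lies in $\mathcal{K}$. For condition (2), suppose $\B', \B'' \in \mathcal{R}'$ with $\expAA{\B''} \subset \expAA{\B'}$ and put $\B := \B' \dot{\cup} \{H_0\}$; then $\B \in \RFC$ by condition (2) for $\RFC$, and if $\B$ were in $\mathcal{K}$ its free neighbour $\B'$ (free, as $\RFC \subseteq \{\text{free arrangements}\}$) would lie in $\mathcal{K}$ by the closure property, contradicting $\B' \in \mathcal{R}'$; hence $\B \in \mathcal{R}'$. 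Condition (3) is symmetric, using the addition direction of the closure property to pass from a hypothetical $\B' \in \mathcal{K}$ to $\B = \B' \dot{\cup} \{H_0\} \in \mathcal{K}$. Thus $\mathcal{R}'$ satisfies all conditions, so minimality of $\RFC$ gives $\RFC \subseteq \mathcal{R}' = \RFC \setminus \mathcal{K}$, forcing $\mathcal{K} \cap \RFC = \emptyset$, a contradiction. Therefore no \FFSA of $\A(\crg{31})$ is recursively free, and in particular neither $\A(\crg{31})$ nor $\A(\crg{29})$ is.

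I expect the main obstacle to be the addition direction of the closure property, as this is exactly where one must exclude escaping the family $\mathcal{K}$ by attaching a hyperplane from outside $\A(\crg{31})$; this is precisely the content secured by the delicate intersection-lattice invariant computations in Lemmas \ref{lem:Add_H_A31} and \ref{lem:Add_H_FFSA}, combined with the combinatorial stability of condition $(\ast)$ under passing to subcollections. Once the closure property is in place, the minimal-class bookkeeping is purely formal.
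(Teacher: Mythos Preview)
Your proof is correct and follows the same route the paper takes: the paper's one-line proof simply cites Lemma~\ref{lem:Add_H_FFSA} and Proposition~\ref{prop:G31_NIF}, leaving implicit exactly the closure-under-free-neighbours and minimal-class argument you have written out. In particular your explicit check via Lemma~\ref{lem:FFSA_G31} that adding back a hyperplane of $\A$ to a \FFSA again yields a \FFSA fills the one step the paper's terse proof glosses over.
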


\begin{proof}
The statement follows immediately from Lemma \ref{lem:Add_H_FFSA} and Proposition \ref{prop:G31_NIF}.
\end{proof}

\subsection{The reflection arrangement $\Arr{\crg{33}}$}

In this section we will see, that the reflection arrangement $\A(W)$ with $W$ isomorphic to the finite complex reflection group
$\crg{33}$ is not recursively free.

\begin{lemma}\label{lem:A33_nRF}
Let $\A = \A(W)$ be the reflection arrangement with $W \cong \crg{33}$.
Then $\A$ is not recursively free.
\end{lemma}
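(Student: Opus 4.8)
The plan is to prove that $\A := \A(\crg{33})$ is \emph{isolated} with respect to single-hyperplane operations: no free arrangement arises from $\A$ either by deleting one of its hyperplanes or by adjoining one further hyperplane $H \notin \A$. Granting this, $\A$ fails to be recursively free by the same argument that settles $\A(\crg{27})$. Indeed, the only base cases of $\RFC$ are empty arrangements (Definition \ref{def:RF}(1)), and $\A$ is nonempty; so in any derivation witnessing $\A \in \RFC$ the step introducing $\A$ is either an addition as in Definition \ref{def:RF}(2), which requires $\A \setminus \{H_0\} \in \RFC$ and hence free, or a deletion as in Definition \ref{def:RF}(3), which exhibits $\A$ as $\tilde{\A} \setminus \{H_0\}$ for a free super-arrangement $\tilde{\A} = \A \dot{\cup} \{H_0\} \in \RFC$. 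Either way $\A$ would possess a free neighbour, contradicting isolation. I use throughout that $\A$ is free with $\expA{\A}{1,7,9,13,15}$ and $\Betrag{\A} = 45$, and that $\crg{33}$ acts transitively on $\A$, so that all restrictions $\A^H$ are isomorphic. I also note that, by order considerations, every proper irreducible parabolic subgroup of $\crg{33}$ is a Coxeter group (the non-real rank-four groups $\crg{29}, \crg{31}$ have orders not dividing $\Betrag{\crg{33}}$), hence each $\A_X$ is inductively and so recursively free; thus Proposition \ref{prop:Arf_AXrf} yields no obstruction and a direct argument is genuinely required.

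For the deletion step I fix a single $H \in \A$, which by transitivity suffices, and determine the restriction $\A^H$ together with its exponents from the intersection lattice $\LA{\A}$ (computed with GAP). If $\A^H$ is not free, then, since $\A$ is free, Theorem \ref{thm:A_AoH_exp} shows that $\A \setminus \{H\}$ cannot be free, because freeness of both $\A$ and $\A \setminus \{H\}$ would force $\A^H = \A''$ to be free. If $\A^H$ is free, the same theorem gives the necessary condition $\expAA{\A^H} \subset \expAA{\A}$ for $\A \setminus \{H\}$ to be free, that is, $\expAA{\A^H}$ must arise from $\{\{1,7,9,13,15\}\}$ by deleting a single entry. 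Checking that the computed exponents of $\A^H$ violate this submultiset condition rules out every free deletion of $\A$.

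The addition step is the main obstacle and proceeds as in Lemma \ref{lem:Add_H_A31}. Assume $H \notin \A$ is such that $\tilde{\A} := \A \dot{\cup} \{H\}$ is free. Applying Lemma \ref{lem:A_u_H} to the free pair $(\tilde{\A}, \A)$ gives $\sum_{X \in P_H} (\Betrag{\A_X} - 1) \in \expAA{\A}$ with $P_H = \{ X \in \LA{\A}_2 \mid X \subseteq H \}$, and as $\A$ is irreducible this sum equals one of $7, 9, 13, 15$. A GAP computation of the multiset $\{\{ \Betrag{\A_X} \mid X \in \LA{\A}_2 \}\}$ shows that, since $\crg{33}$ has only reflections of order two, the rank-two localizations satisfy $\Betrag{\A_X} \in \{2,3\}$, so each summand $\Betrag{\A_X} - 1$ is $1$ or $2$ and a candidate $H$ must pass through a sizeable collinear family of rank-two flats. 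It then remains to enumerate, up to the action of $\crg{33}$, the finitely many hyperplanes $H$ compatible with this numerical constraint and to verify that each either already belongs to $\A$ or yields a characteristic polynomial $\CharPolyA{\tilde{\A}}$ that does not factor into integer linear forms matching a possible exponent multiset, so that $\tilde{\A}$ cannot be free by Theorems \ref{thm:A_free_factZ} and \ref{thm:A_AoH_exp}. This finite but delicate enumeration, best carried out on the intersection lattice with computer assistance exactly as in Lemma \ref{lem:Add_H_A31}, is where the real difficulty lies; completing it shows that no new hyperplane can be adjoined. Together with the deletion step this establishes that $\A$ is isolated, and hence that $\A(\crg{33})$ is not recursively free.
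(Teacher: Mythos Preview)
Your proposal is correct and follows essentially the same strategy as the paper: show that $\A(\crg{33})$ is isolated (no free deletion, no free addition), using Lemma~\ref{lem:A_u_H} together with the rank-two localization sizes $\Betrag{\A_X}\in\{2,3\}$ to cut the addition side down to a finite computer check of characteristic polynomials. The paper differs only in presentation: for the deletion step it simply cites \cite[Remark~2.17]{2012arXiv1208.3131H}, whereas you rederive it via $\A^H$ (which works, since $\Betrag{\A^H}=28$ is not of the form $45-b$ for any $b\in\{1,7,9,13,15\}$); and your aside on proper parabolics being Coxeter, while true, is not needed for the argument.
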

\begin{proof}
With Theorem \ref{thm:RA_IF} the reflection arrangement $\A$ is
not inductively free.

In \cite[Remark~2.17]{2012arXiv1208.3131H} it is shown 
that one cannot remove a single hyperplane from $\A$ resulting in a free arrangement $\A'$

Thus to prove the lemma, we have to show, that we also cannot add a new hyperplane $H$ such that the 
arrangements $\tilde{\A} := \A \dot{\cup} \{ H\}$ and $\tilde{\A}^H$ are free with suitable exponents.

The exponents of $\A$ are $\expA{\A}{1,7,9,13,15}$.

Now suppose that there is a hyperplane $H$ such that $\tilde{\A}$ is free.
Looking at the intersection lattice $L:= \LA{\A}$ we find the follwing multiset of invariants:
\begin{equation*}
\{\{ \Betrag{\A_X} \mid X \in L_2 \}\} = \{\{ 2^{270}, 3^{240} \}\}.
\end{equation*}
With Lemma \ref{lem:A_u_H} and the same argument 
as in the proof of Lemma \ref{lem:Add_H_A31} for $H$ we must have:
\begin{equation*}
\Betrag{ P_H } = \Betrag{\{ X \in L_2 \mid X \subseteq H \} } \geq 4.
\end{equation*}

If we look at all the possible cases for an $H$ such that $\Betrag{ P_H } \geq 2$ (there are only 2 possible cases up to symmetry)
we already see that in none of these cases the characteristic polynomial of $\tilde{\A}$ splits into linear factors over
$\PolyRing{\Ganz}{x}$ and by Theorem \ref{thm:A_free_factZ} $\tilde{\A}$ is not free.

Hence we cannot add a single hyperplane $H$ to $\A$ and obtain a free arrangement $\A \dot{\cup} \{H\} =:\tilde{\A}$
and $\A$ is not recursively free. 
\end{proof}

\subsection{The reflection arrangement $\Arr{\crg{34}}$}

In this part we will see, that the reflection arrangement $\A(W)$ with $W$ isomorphic to the finite complex reflection group
$\crg{34}$ is free but not recursively free.

\begin{lemma}\label{lem:A34_nRF}
Let $\A = \A(W)$ be the reflection arrangement with $W \cong \crg{34}$.
Then $\A$ is not recursively free.
\end{lemma}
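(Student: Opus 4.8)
The plan is to argue exactly as in the proof of Lemma~\ref{lem:A33_nRF} for $\crg{33}$. By Definition~\ref{def:RF}, if $\A$ were recursively free then, being nonempty, it would have to enter $\RFC$ either as the large arrangement of an addition step (move~(2)), which needs the deletion $\A' = \A \setminus \{H_0\}$ to be free for some $H_0 \in \A$, or as the produced arrangement $\A'$ of a deletion step (move~(3)), which needs some addition $\A \dot{\cup} \{H\}$ to be free. So it suffices to rule out both a free single-hyperplane deletion and a free single-hyperplane addition. First I would invoke Theorem~\ref{thm:RA_IF} to record that $\A$ is not inductively free. The deletion side is then settled by \cite[Remark~2.17]{2012arXiv1208.3131H}, which shows that removing any single hyperplane from $\A$ destroys freeness. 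Hence everything reduces to proving that no new hyperplane $H \notin \A$ makes $\tilde{\A} := \A \dot{\cup} \{ H \}$ free.

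For the addition side I would, with GAP \cite{GAP4}, record the exponents $\expA{\A}{1,13,19,25,31,37}$ (note $1+13+19+25+31+37 = 126 = \Betrag{\A}$, as required by~(\ref{Sum_exp})) and compute the multiset $\{\{ \Betrag{\A_X} \mid X \in L_2 \}\}$ of rank-$2$ localization sizes, where $L := \LA{\A}$. Supposing $\tilde{\A}$ free, Lemma~\ref{lem:A_u_H} yields $\sum_{X \in P_H} (\Betrag{\A_X} - 1) \in \expAA{\A}$ for $P_H := \{ X \in L_2 \mid X \subseteq H \}$, and since $\A$ is irreducible this sum differs from $1$, hence is at least $13$. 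Because the largest entry of the localization multiset is small (it was $3$ in the $\crg{33}$ case), this forces $H$ to pass through a fixed minimal number of rank-$2$ flats of $\A$, which is the combinatorial restriction that reduces the a priori infinite family of possible $H$ to finitely many candidates up to the action of $W$.

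It then remains to eliminate each surviving candidate. For this I would use the Factorization Theorem~\ref{thm:A_free_factZ}: it suffices to check that $\CharPolyA{\tilde{\A}}$ fails to split into linear factors over $\PolyRing{\Ganz}{x}$, and in the borderline cases where it does split, to check with Theorem~\ref{thm:A_AoH_exp} that the forced exponents are incompatible with those of $\tilde{\A}$ (this is precisely what happened for $\crg{33}$, where every admissible $H$ gave a non-splitting characteristic polynomial). Ruling out all additions finishes the reduction and shows that $\A(\crg{34})$ is not recursively free.

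The main obstacle I anticipate is the sheer size of the computation: $\A$ lives in $\Komplex^6$ and has $126$ hyperplanes, so the rank-$2$ flats and, above all, the candidate addition hyperplanes $H$ are far more numerous than for $\crg{33}$. Organizing these candidates into $W$-orbits so as to keep the case distinction finite and verifiable, and then carrying out the attendant characteristic-polynomial computations, is where the genuine, and essentially computer-assisted, work lies; the conceptual skeleton is otherwise identical to the lower-rank exceptional cases already treated.
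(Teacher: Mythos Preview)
Your plan is sound in outline—the paper even remarks that one ``could follow the same path as in the proof of Lemma~\ref{lem:A33_nRF}''—but you have overlooked a shortcut that dispenses with the large computation altogether. The paper instead observes that $\A(\crg{33})$ occurs as a localization $\A_X$ of $\A = \A(\crg{34})$ for a suitable $X \in \LA{\A}$, since $\crg{33}$ is a parabolic subgroup of $\crg{34}$ (see \cite[Table~C.15]{orlik1992arrangements} or \cite[Ch.~7,~6.1]{lehrer2009unitary}). By Proposition~\ref{prop:Arf_AXrf}, every localization of a recursively free arrangement is again recursively free; since $\A(\crg{33})$ is not recursively free by Lemma~\ref{lem:A33_nRF}, neither is $\A(\crg{34})$.

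This is a two-line argument. Your route, by contrast, would require enumerating up to the action of $W$ all hyperplanes passing through sufficiently many rank-$2$ flats of a $126$-hyperplane arrangement in $\Komplex^6$, and then computing $\CharPolyA{\tilde{\A}}$ for each candidate—feasible in principle, but you have not actually carried it out, and the case analysis is substantially larger than for $\crg{33}$. The localization argument leverages the work already done for $\crg{33}$ and needs no new combinatorial data at all; the only thing your direct approach would buy is a proof independent of Lemma~\ref{lem:A33_nRF}.
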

\begin{proof}
To prove the lemma, we could follow the same path as in the proof of Lemma \ref{lem:A33_nRF}.

But since the arrangement of $\Arr{\crg{33}}$ is a parabolic subarrangement (localization) $\A_X$ 
of the reflection arrangement $\A = \Arr{\crg{34}}$ for a suitable $X \in \LA{\A}$ 
(see e.g. \cite[Table~C.15.]{orlik1992arrangements} or \cite[Ch.~7,~6.1]{lehrer2009unitary}), 
and this subarrangement is not recursively free by Lemma \ref{lem:A33_nRF},
$\A$ cannot be recursively free by Proposition \ref{prop:Arf_AXrf}.
\end{proof}

This completes the proof of Theorem \ref{thm:RFcrArr}.

\section{Abe's conjecture}\label{sec:Abes_Conjecture}

In this section we give the proof of Theorem \ref{thm:ConjAbe}, which settles \cite[Conj.~5.11]{AbeDivFree2015}.

The following result by Abe gives the divisional freeness of  the reflection arrangement $\A(\crg{31})$.
\begin{theorem}[{\cite[Cor.~4.7]{AbeDivFree2015}}]\label{thm:AbeDFRA}
Let $W$ be a finite irreducible complex reflection group and $\A = \A(W)$ its corresponding reflection arrangement.
Then $\A \in \IFC$ or $W = \crg{31}$ if and only if $\A \in \DFC$.
\end{theorem}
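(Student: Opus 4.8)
The plan is to prove the biconditional by treating its two implications separately, combining the classification of inductively free reflection arrangements (Theorem~\ref{thm:RA_IF}) with the Division theorem (Theorem~\ref{thm:Div_Thm}) and, crucially, with the recursion defining $\DFC$ (Definition~\ref{def:DF}). For the implication ``$\A \in \IFC$ or $W = \crg{31}$'' $\Rightarrow$ $\A \in \DFC$ there is nothing to do in the first case, since $\IFC \subseteq \DFC$ by \cite[Thm.~1.6]{AbeDivFree2015}. For the case $W = \crg{31}$ I would use division: $\A = \A(\crg{31})$ is free with $\expAA{\A} = \{\{1,13,17,29\}\}$, so $\CharPolyA{\A} = (t-1)(t-13)(t-17)(t-29)$ by Theorem~\ref{thm:A_free_factZ}; and, as recorded in the proof of Lemma~\ref{lem:SubsetN_FFSA}, every restriction $\A^H$ ($H \in \A$) is a free $3$-arrangement with $\expAA{\A^H} = \{\{1,13,17\}\}$, hence $\CharPolyA{\A^H} = (t-1)(t-13)(t-17)$, which divides $\CharPolyA{\A}$. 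It then suffices to see $\A^H \in \DFC$; being a free rank-$3$ arrangement one checks directly (from its explicit structure in \cite[App.~C, D]{orlik1992arrangements}, e.g.\ via one further division step onto a rank-$2$ restriction, which lies in $\DFC$ by Definition~\ref{def:DF}(1)) that $\A^H$ is divisionally free. Applying Definition~\ref{def:DF}(2) with this $H$ then gives $\A(\crg{31}) \in \DFC$.

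For the converse, $\A \in \DFC \Rightarrow \A \in \IFC$ or $W = \crg{31}$, I would argue by contraposition. Assume $W$ is irreducible, $\A \notin \IFC$ and $W \neq \crg{31}$; by Theorem~\ref{thm:RA_IF} then $W \in \{\,\crgM{r}{r}{\ell}\ (r,\ell \geq 3),\ \crg{24},\ \crg{27},\ \crg{29},\ \crg{33},\ \crg{34}\,\}$. The key observation is that Definition~\ref{def:DF} yields a \emph{necessary} condition: each of these reflection arrangements is essential, nonempty and of rank $\geq 3$, so it can only enter $\DFC$ through clause (2), and therefore $\A \in \DFC$ forces the existence of some $H \in \A$ with $\CharPolyA{\A^H} \mid \CharPolyA{\A}$. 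Since restrictions of reflection arrangements are again free, Theorem~\ref{thm:A_free_factZ} turns this divisibility into the combinatorial condition that the exponent multiset $\expAA{\A^H}$ be contained in $\expAA{\A}$. Thus I would, for each $W$ in the list, compare the exponents of $\A(W)$ with those of all its restrictions $\A(W)^H$ and verify that $\expAA{\A(W)^H} \nsubseteq \expAA{\A(W)}$ for \emph{every} $H$; the necessary condition then fails and $\A(W) \notin \DFC$.

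For the five exceptional groups $\crg{24},\crg{27},\crg{29},\crg{33},\crg{34}$ this is a finite inspection against the restriction data of \cite[App.~C, D]{orlik1992arrangements} (and for $\crg{34}$ one may alternatively exploit that $\Arr{\crg{33}}$ occurs as a localization, as in Lemma~\ref{lem:A34_nRF}, should a localization-closure property of $\DFC$ be available); for the infinite family $\crgM{r}{r}{\ell}$ one must instead compute the exponents of the restrictions uniformly in $r,\ell$. The main obstacle is precisely this converse direction, and within it the family $\crgM{r}{r}{\ell}$: both $\A(W)$ and its restrictions vary with the parameters, and one needs a clean description of the restriction exponents together with an argument that no full sub-multiset of $\expAA{\A(W)}$ is ever realized by a restriction. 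Conceptually this is where the special status of $\crg{31}$ is isolated — it is the unique non-inductively-free irreducible case whose restrictions produce a characteristic polynomial dividing $\CharPolyA{\A}$, while for all the others the divisibility, hence membership in $\DFC$, fails at the very first (last) step.
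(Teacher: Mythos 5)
First, a point of comparison you could not have known: this paper does \emph{not} prove Theorem~\ref{thm:AbeDFRA} at all --- it is imported verbatim from Abe \cite[Cor.~4.7]{AbeDivFree2015} and used as a black box in Section~\ref{sec:Abes_Conjecture}. So the relevant benchmark is Abe's own argument, and your outline reconstructs its skeleton faithfully: $\IFC \subseteq \DFC$ disposes of the inductively free case; for $\crg{31}$ one division step works, since $\CharPolyA{\A^H}=(t-1)(t-13)(t-17)$ divides $\CharPolyA{\A}=(t-1)(t-13)(t-17)(t-29)$; and in the converse your necessary condition is sound: a nonempty essential arrangement of dimension at least $3$ can only enter $\DFC$ through Definition~\ref{def:DF}(2), restrictions of reflection arrangements are free, and Theorem~\ref{thm:A_free_factZ} converts divisibility into multiset containment of exponents, which does fail for every hyperplane in each remaining case. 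Your worry about the infinite family is also overstated: all hyperplanes of $\A(\crgM{r}{r}{\ell})$ lie in one orbit, the restriction is the intermediate arrangement $\A_{\ell-1}^{1}(r)$, and its exponents $\{\{1,r+1,\ldots,(\ell-3)r+1,(\ell-2)(r-1)+1\}\}$ are never a sub-multiset of $\expAA{\A}=\{\{1,r+1,\ldots,(\ell-2)r+1,(\ell-1)(r-1)\}\}$ once $r\geq 3$ (matching the last exponent forces $r=2$ in every case).

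The one step that would fail as written is your certification that $\A(\crg{31})^H \in \DFC$. ``Being a free rank-$3$ arrangement one checks directly'' is not a safe principle: a free rank-$3$ arrangement need not admit any division step, and $\A(\crg{27})$ --- sitting in the very list you are excluding --- is free of rank $3$ and not in $\DFC$. So membership of $\A(\crg{31})^H$ needs a specific input your sketch never supplies; if you insist on a division step, you would have to exhibit a hyperplane $H'$ of $\A(\crg{31})^H$ with $\Betrag{(\A(\crg{31})^H)^{H'}} \in \{14,18\}$, which you do not do. The clean repair is already in this paper's bibliography: $(\A(\crg{31}),A_1)$ is not among the four non-inductively-free restrictions of \cite[Thm.~1.2]{MR3250448} listed in Section 5, hence it is inductively free and therefore in $\DFC$. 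Relatedly, your fallback for $\crg{34}$ via the localization $\A(\crg{33})$ needs $\DFC$ to be closed under localization, which Definition~\ref{def:DF} does not give and which this paper records only for $\RFC$ (Proposition~\ref{prop:Arf_AXrf}); Abe does establish such closure, but you correctly flag it as conditional. With the $G_{31}$-restriction step repaired and the five finite exceptional inspections actually carried out against the data of \cite[App.~C, D]{orlik1992arrangements}, your plan becomes a complete proof along Abe's lines; as submitted it is a correct strategy with those verifications outstanding.
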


With results from the previous section we can now state the proof of the theorem.

\begin{proof}[{Proof of Theorem \ref{thm:ConjAbe}}]
Let $\A = \A(\crg{31})$ be the reflection arrangement of the finite complex reflection group $\crg{31}$.
Then on the one hand by Theorem \ref{thm:AbeDFRA} we have $\A \in \DFC$, but on the other hand
by Theorem \ref{thm:G31_G29_nRF} we have $\A \notin \RFC$.
\end{proof}

\begin{remark}
Furthermore, with Corollary \ref{coro:A31_FFSA_nRF}, we see that every \FFSA $\tilde{\A} \subseteq \A(\crg{31})$
still containing a hyperplane $H \in \tilde{\A}$ such that $\Betrag{\tilde{\A}^H}=31$ is in $\DFC$. 
\end{remark}

\section{Restrictions}

In \cite{MR3250448} Amend, Hoge and R\"ohrle showed, which restrictions of reflection arrangements are inductively free.
Despite the free but not inductively free reflection arrangements them self investigated in this paper, 
by \cite[Thm.~1.2]{MR3250448} there are four restrictions of reflection arrangements which remain to be inspected, namely
\begin{enumerate}
\item the $4$-dimensional restriction $(\A(\crg{33}),A_1)$, 
\item the $5$-dimensional restriction $(\A(\crg{34}),A_1)$,
\item the $4$-dimensional restriction $(\A(\crg{34}),A_1^2)$,
and
\item the $4$-dimensional restriction $(\A(\crg{34}),A_2)$,
\end{enumerate}
which are free but not inductively free (compare with \cite[App.~C.16, C.17]{orlik1992arrangements}).

Using similar techniques as for the reflection arrangements $\A(\crg{31})$, and $\A(\crg{33})$,
we can say the following about the remaining cases:
\begin{proposition}\label{prop:Res_RF}~
\begin{enumerate}
\item $(\A(\crg{33},A_1)$ is recursively free.
\item $(\A(\crg{34},A_1)$ is not recursively free.
\item $(\A(\crg{34}),A_1^2)$ is not recursively free,
and
\item $(\A(\crg{34}),A_2)$ is not recursively free.
\end{enumerate}
\end{proposition}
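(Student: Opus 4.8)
The plan is to split the four assertions into the single recursively free case (1) and the three negative cases (2)--(4), reusing the two engines already developed in this paper: the supersolvable-resolution technique applied to $\Arr{\crg{24}}$ on the positive side, and the lattice-invariant obstruction together with the free-filtration bookkeeping applied to $\Arr{\crg{33}}$ and $\Arr{\crg{31}}$ on the negative side. For (1), I would mimic Subsection \ref{subsec:A24} verbatim: fix explicit defining forms for the $4$-arrangement $\Arr{\crg{33}}^{A_1}$ (read off from \cite{orlik1992arrangements} or computed with \cite{GAP4}) together with its exponents, then adjoin a sequence of hyperplanes $H_1,\ldots,H_m$ chosen so that the enlarged arrangement acquires a maximal chain of modular flats and is therefore supersolvable. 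One checks at each addition that the arrangement stays free with the predicted exponents and that the restriction $\A_j^{H_j}$ has exponents contained in those of $\A_j$, so that Theorem \ref{thm:Addition_Deletion} applies. By Example \ref{ex:SS_IF} the terminal supersolvable arrangement is inductively free, hence recursively free, and since every arrangement in the chain is free, repeated use of the deletion rule Definition \ref{def:RF}(3) places $\Arr{\crg{33}}^{A_1}$ in $\RFC$.

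For the two $4$-dimensional restrictions $\Arr{\crg{34}}^{A_1^2}$ and $\Arr{\crg{34}}^{A_2}$ in (3)--(4), freeness and non-inductive-freeness are already supplied by \cite[Thm.~1.2]{MR3250448}, so it only remains to block every addition-deletion move. To rule out additions I would compute the multiset $\{\{\Betrag{\A_X}\mid X\in\LAq{\A}{2}\}\}$ of rank-$2$ localization sizes, feed it into Lemma \ref{lem:A_u_H} to force any admissible new hyperplane $H$ to contain a definite minimum number of rank-$2$ flats, enumerate the finitely many such $H$ up to the symmetry group of the arrangement, and check in each surviving case that $\CharPolyA{\tilde{\A}}$ does not split into integral linear factors, whence $\tilde{\A}=\A\,\dot{\cup}\,\{H\}$ is non-free by Theorem \ref{thm:A_free_factZ}. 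For deletions there are two regimes exactly as before: if no single hyperplane can be removed keeping freeness the arrangement is isolated (the $\Arr{\crg{33}}$ situation), while if deletions are possible I would classify the free filtration subarrangements by a combinatorial condition on their intersection lattices in the spirit of Lemma \ref{lem:FFSA_G31} and then forbid additions to each of them as in Lemma \ref{lem:Add_H_FFSA}; either way the whole reachable cluster is pinned, so these restrictions lie outside $\RFC$.

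For the $5$-dimensional restriction $\Arr{\crg{34}}^{A_1}$ in (2) I would avoid a direct rank-$5$ lattice analysis and instead reduce to the cases just treated, exactly as $\Arr{\crg{34}}$ was reduced to $\Arr{\crg{33}}$ in Lemma \ref{lem:A34_nRF}: I expect a flat $X\in\LA{\Arr{\crg{34}}^{A_1}}$ whose localization $(\Arr{\crg{34}}^{A_1})_X$ is isomorphic to one of the non-recursively-free $4$-dimensional restrictions from (3)--(4) (a restriction-of-a-localization identification that can be read off the parabolic data of $\crg{34}$ with \cite{GAP4}), after which Proposition \ref{prop:Arf_AXrf} forces $\Arr{\crg{34}}^{A_1}\notin\RFC$; should no such localization materialize, the fallback is the same direct obstruction as in (3)--(4), one dimension higher. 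The main obstacle is squarely on the negative side: extracting the correct lower bound on $\Betrag{P_H}$ from the lattice invariants and then carrying out the symmetry-reduced enumeration of candidate hyperplanes while certifying non-factorization of $\CharPolyA{\tilde{\A}}$ — the same delicate case-by-case bookkeeping that made $\Arr{\crg{31}}$ the hardest arrangement in the paper — and, for (1), exhibiting an explicit supersolvable resolution and verifying freeness at every intermediate stage.
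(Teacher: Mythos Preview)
Your overall strategy is sound for (3) and (4), but two of the four cases diverge substantially from the paper's argument, and one of those divergences is a real risk.

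For (1) the paper does \emph{not} build a supersolvable resolution by pure addition. Instead it writes down 28 explicit linear forms for $\A=(\A(\crg{33}),A_1)$, first \emph{removes} six hyperplanes $K_1,\ldots,K_6$ in order to obtain a free filtration subarrangement $\tilde{\A}$ (with all restrictions $\A_{i-1}^{K_i}$ inductively free), and only then \emph{adds} two genuinely new hyperplanes $I_1,I_2$ so that $\tilde{\A}\cup\{I_1,I_2\}$ is inductively free. Recursive freeness of $\A$ then follows by walking back through Definition \ref{def:RF}(2) and (3). Your plan to mimic Subsection \ref{subsec:A24} verbatim, adding hyperplanes only, is not a priori guaranteed to succeed in rank $4$: there is no reason a free supersolvable resolution must exist, and the fact that the paper resorts to the mixed delete-then-add manoeuvre strongly suggests the straight-addition route was not available. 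This is the place where your proposal could actually fail.

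For (2) your primary idea---finding a localization of $(\A(\crg{34}),A_1)$ isomorphic to one of the arrangements in (3) or (4)---conflates restriction with localization: $(\A(\crg{34}),A_1^2)$ and $(\A(\crg{34}),A_2)$ are \emph{further restrictions} of $(\A(\crg{34}),A_1)$, not localizations, so Proposition \ref{prop:Arf_AXrf} does not apply to them. The paper simply states that $(\A(\crg{34}),A_1)$ is isolated in the sense of $\A(\crg{33})$, i.e.\ runs your fallback directly.

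For (3) and (4) you are close to the paper, with two economies worth noting. First, the paper does not classify the free filtration subarrangements from scratch: it quotes the exponents of the minimal ones from \cite[Lem.~4.2, Tab.~11,12]{MR3250448} and argues only at that level. Second, the obstruction to addition is not the non-factorization of $\CharPolyA{\tilde{\A}}$ but a geometric incidence check as in Lemma \ref{lem:Add_H_FFSA}: for any pair $X,Y\in\LAq{\A}{2}$ with the required localization sizes one has $X+Y=V$ or $X+Y\in\A$, forcing $H\in\A$. Your characteristic-polynomial test would also work in principle, but entails enumerating candidate hyperplanes, which the paper avoids.
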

\begin{proof}
Let $\A$ be as in case (1). The arrangement may be defined by the following linear forms:
\begin{align*}
\A = \{ &(               1,               0,               0,               0 )^\perp,
	(               1,               1,               0,               0 )^\perp,
         (               1,               1,               1,               0 )^\perp,
        (               1,               1,               1,               1 )^\perp, 
       (               0,               1,               0,               0 )^\perp, \\
        &(               0,               1,               1,               0 )^\perp,
        (               0,               1,               1,               1 )^\perp,
        (               0,               0,               1,               0 )^\perp, 
        (               0,               0,               1,               1 )^\perp,
        (               0,               0,               0,               1 )^\perp, \\
       & (         \zeta^2,               0,               -1,         \zeta^2 )^\perp,
        (              1,               0,               -1,         \zeta^2 )^\perp,
        (          2\zeta,   2\zeta+\zeta^2,            \zeta,         -\zeta^2 )^\perp, \\
        &(              -1,   \zeta+2\zeta^2,          \zeta^2,              -1 )^\perp,
        (           \zeta,               0,               -1,         \zeta^2 )^\perp, 
        (               2,  -2\zeta-\zeta^2,               1,         -\zeta^2 )^\perp, \\
      &(           \zeta,    \zeta-\zeta^2,         2\zeta,           \zeta )^\perp,
        (         \zeta^2,  \zeta-2\zeta^2,               -1,         \zeta^2 )^\perp, \\
       &(         \zeta^2,     -\zeta+\zeta^2,       2\zeta^2,         \zeta^2 )^\perp, 
       (         \zeta^2,               0,           - \zeta,         \zeta^2 )^\perp, 
       (         \zeta^2,               0,          -\zeta^2,              1 )^\perp, \\
       &(         \zeta^2,               0,               -1,           \zeta )^\perp, 
       (          2\zeta,     \zeta-\zeta^2,       -2\zeta^2,         -\zeta^2 )^\perp,
       (           \zeta,  2\zeta+\zeta^2,               -1,         \zeta^2 )^\perp, \\
       &(        -2\zeta^2,    \zeta-\zeta^2,         2\zeta,           \zeta )^\perp, 
       (              -1,   2\zeta+\zeta^2,            \zeta,         -\zeta^2 )^\perp, \\
       &(          2\zeta,     \zeta-\zeta^2,            \zeta,         -\zeta^2 )^\perp, 
       (          2\zeta,   2\zeta+\zeta^2,            \zeta,              -1 )^\perp 
 \} \\
 = \{ &H_1,\ldots, H_{28} \},
\end{align*}
where $\zeta = \frac{1}{2}(-1+i\sqrt{3})$ is a primitive cube root of unity.

We can successively remove $6$ hyperplanes
\begin{align*}
\{H_5, H_6, H_7, H_{13}, H_{25}, H_{28} \} =: \{K_1,\ldots,K_6\} =: \N,
\end{align*}
with respect to this order such that $\A \setminus \N = \tilde{\A}$ is a \FFSA with a free filtration 
$\A = \A_0 \supsetneq \A_1 \supsetneq \cdots \supsetneq \A_6 = \tilde{\A}$, $\A_i = \A \setminus \{K_1,\ldots,K_i\}$.
Moreover, all the restrictions $\A_{i-1}^{K_{i}}$, ($1 \leq i \leq 6$), are inductively free.
Then we can add $2$ new hyperplanes 
\begin{align*}
\{I_1,I_2\} := \{ ( -2\zeta-3\zeta^2, 3, 2, 1 )^\perp, ( \zeta, 0, 2, 1 )^\perp \},
\end{align*}
such that $\tilde{\A_j} := \tilde{\A} \cup \{I_1,\ldots,I_j\}$, ($j=1,2$) 
are all free and
in particular the arrangement $\tilde{\A}_2 = \tilde{\A} \cup \{I_1,I_2\}$ is inductively free.
Furthermore the $\tilde{\A}_j^{I_j}$ are inductively free.
Hence $\A$ is recursively free.

The arrangement in (2) is isolated which can be seen similarly as for the arrangement $\A(\crg{33})$.

To show that the restrictions $(\A(\crg{34}),A_1^2), (\A(\crg{34},A_2)$ from (3) and (4)
are not recursively free, we look at the exponents of their 
minimal possible \FFSAs computed by Amend, Hoge, and R\"ohrle in \cite[Lemma~4.2,~Tab.~11,12]{MR3250448} and then use 
Lemma  \ref{lem:A_u_H}
and a similar argument as in the proof of Lemma \ref{lem:Add_H_FFSA}.

Let $\A$ be as in (3). Then Amend, Hoge, and R\"ohrle showed that the multiset of exponents of a 
minimal possible \FFSA $\tilde{\A} \subseteq \A$ are $\expA{\tilde{\A}}{1,13,15,15}$, (see \cite[Tab.~11]{MR3250448}).
Now, as in the proof of Lemma \ref{lem:Add_H_FFSA}, suppose $\tilde{\A} \subseteq \A$ is a \FFSAp, 
and there is a hyperplane $H$, such that $\tilde{\A} \cup \{ H \}$
is free.
Then by Lemma \ref{lem:A_u_H} we have $\sum_{X \in P_H} (\Betrag{\tilde{\A}_X} - 1) \geq 13$, where 
$P_H= \{ X \in \LAq{\tilde{\A}}{2} \mid X \subseteq H \}$. Now $\LAq{\tilde{\A}}{2} \subseteq \LAq{\A}{2}$ and
we have the following multiset if invaraints of $\A$:
\begin{align*}
\{\{ \Betrag{\A_X} \mid X \in \LAq{\A}{2} \}\} = \{\{ 2^{264},3^{304},4^{34},5^{16}\}\}.
\end{align*}
So in particular we should have $\sum_{X \in P_H} (\Betrag{\A_X} - 1) \geq 13$, and $\Betrag{P_H} \geq 4$.
If $\Betrag{P_H} = 4$ then there are at least two $X,Y \in P_H$ with $\Betrag{\A_X}=\Betrag{\A_Y} = 5$
But for all such $X,Y$ we either have $X + Y = V$ or $X+Y \in \A$.
So there is at most one $X \in P_H$ such that $\Betrag{\A_X}=5$.
If $\Betrag{P_H} = 4$ we must have at least $X,Y \in P_H$
with $\Betrag{\A_X}=5$, $\Betrag{\A_Y} = 4$.
But again for all such $X,Y$ we either have $X + Y = V$ or $X+Y \in \A$.
Considering the other cases (giving a number partition of the smalles exponent not equal to $1$)
similarly we get that $H \in \A$. 
Hence $\A$ is not recursively free.

Finally let $\A$ be as in (4).
Then Amend, Hoge, and R\"ohrle showed that the multiset of exponents of a 
minimal possible \FFSA $\tilde{\A} \subseteq \A$ are $\expA{\tilde{\A}}{1,9,10,11}$ or $\expA{\tilde{\A}}{1,10,10,10}$,
(see \cite[Tab.~12]{MR3250448}).
Suppose $\tilde{\A} \subseteq \A$ is a \FFSAp, and there is a hyperplane $H$, such that $\tilde{\A} \cup \{ H \}$
is free.
Then inspecting the intersection lattice of $\A$ analogously to case (3) we again get $H \in \A$. 
Hence $\A$ is not recursively free.
\end{proof}

Since the restrictions $(\A(\crg{34}),A_1^2)$ and $(\A(\crg{34}),A_2)$ behave somehow similar to the reflection arrangement
$\A(\crg{31})$, they also give examples for divisionally free but not recursively free arrangement, 
(compare with Theorem \ref{thm:ConjAbe} and Section \ref{sec:Abes_Conjecture}).
For further details on divisional freeness of restrictions of reflection arrangements see the recent note by R\"ohrle, \cite{Roe15}.

\renewcommand{\refname}{References}

\bibliographystyle{amsalpha}
\providecommand{\bysame}{\leavevmode\hbox to3em{\hrulefill}\thinspace}
\providecommand{\MR}{\relax\ifhmode\unskip\space\fi MR }
\providecommand{\MRhref}[2]{%
  \href{http://www.ams.org/mathscinet-getitem?mr=#1}{#2}
}
\providecommand{\href}[2]{#2}

\end{document}